\newtheorem{thm}{Theorem}[section]
\newtheorem{prop}[thm]{Proposition}
\newtheorem{lem}[thm]{Lemma}
\newtheorem{false statement}{False statement}
\newtheorem{cor}[thm]{Corollary}
\newtheorem{fact}[thm]{Fact}
\theoremstyle{definition}
\newtheorem{defn}[thm]{Definition}
\newtheorem{claim}[thm]{Claim}
\newtheorem{conj}[thm]{Conjecture}
\makeatletter \@addtoreset{equation}{section}
\def\hh{\mathcal{H}}
\def\hl{\mathcal{L}}
\def\hht{\mathcal{T}}
\def\hd{\mathcal{D}}
\def\hf{\mathcal{F}}
\def\hg{\mathcal{G}}
\def\hk{\mathcal{K}}
\def\ha{\mathcal{A}}
\def\hb{\mathcal{B}}
\def\hs{\mathcal{S}}
\begin{document}
\title{\bf\Large The overflow in the Katona Theorem}
\date{}
\author{Peter Frankl$^1$, Jian Wang$^2$\\[10pt]
$^{1}$R\'{e}nyi Institute, Budapest, Hungary\\[6pt]
$^{2}$Department of Mathematics\\
Taiyuan Univer sity of Technology\\
Taiyuan 030024, P. R. China\\[6pt]
E-mail:  $^1$frankl.peter@renyi.hu, $^2$wangjian01@tyut.edu.cn
}

\maketitle
\begin{abstract}
Let $n>2r>0$ be integers. We consider families $\hf$ of subsets of an $n$-element set, in which the union of any two members has size  at most $2r$. One of our results states that for $n\geq 6r$ the number of members of size exceeding $r$ in $\hf$ is at most $\binom{n-2}{r-1}$. Another result shows that for $n>3.5r$ the number of sets of size at least $r$ is at most $\binom{n}{r}$. Both bounds are best possible and the latter sharpens the classical Katona Theorem. Similar results are proved for the odd case of the Katona Theorem as well.

\vspace{6pt}
{\noindent\bf AMS classification:} 05D05.

\vspace{6pt}
{\noindent\bf Key words:} the Katona Theorem; overflow; shifting; the random walk method.
\end{abstract}
\section{Introduction}
Let $[n]=\{1,2,\ldots,n\}$ be the standard $n$-element set and $2^{[n]}$ its power set. For $0\leq k\leq n$ let $\binom{[n]}{k}\subset 2^{[n]}$ be the collection of all $k$-subsets. Subsets of $2^{[n]}$ are called {\it families}. If $\hf\subset \binom{[n]}{k}$ it is called $k$-uniform.

For a positive integer $t$, the family $\hf$ is called {\it $t$-intersecting} if $|F\cap F'|\geq t$ for all $F,F'\in \hf$.  Also for a positive integer $u$, $\hf$ is called {\it $u$-union} if $|F\cup F'|\leq u$ for all $F,F'\in \hf$.

Define $\hf^c=\{[n]\setminus F\colon F\in \hf\}$, the {\it family of complements}. It should be obvious that $\hf\subset 2^{[t]}$ is $t$-intersecting if and only if  $\hf^c$ is $(n-t)$-union.

Let us state the two most important results concerning intersecting families, the Erd\H{o}s-Ko-Rado Theorem and the Katona Theorem.

\begin{thm}[\cite{EKR}]
Let $n>k\geq t>0$ be integers and  $\hf\subset \binom{[n]}{k}$ be $t$-intersecting. Then for $n>n_0(k,t)$,
\begin{align}\label{ineq-ekr}
|\hf|\leq \binom{n-t}{k-t}.
\end{align}
\end{thm}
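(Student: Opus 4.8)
The plan is to combine the shifting technique with a structural description of shifted $t$-intersecting families, and then to extract the stated bound for large $n$ by a crude size comparison. Throughout, the only genuinely new work lies in the structural step.

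\textbf{Step 1: reduction to shifted families.} For $1\le i<j\le n$ let $S_{ij}$ be the usual shift operator: it replaces every $F\in\hf$ with $j\in F$, $i\notin F$ and $(F\setminus\{j\})\cup\{i\}\notin\hf$ by $(F\setminus\{j\})\cup\{i\}$, and leaves all other members unchanged. One checks routinely that $|S_{ij}(\hf)|=|\hf|$ and that $S_{ij}(\hf)$ is still $t$-intersecting. Iterating the $S_{ij}$ strictly decreases $\sum_{F\in\hf}\sum_{x\in F}x$ whenever it does anything, so after finitely many steps one reaches a \emph{shifted} family (invariant under every $S_{ij}$) of the same size. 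Hence we may assume $\hf$ itself is shifted.

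\textbf{Step 2: structure of shifted $t$-intersecting families.} I would prove, by induction, the following. If $\hf\subseteq\binom{[n]}{k}$ is shifted and $t$-intersecting, then $\hf\subseteq\hh_i$ for some $0\le i\le k-t$, where
\begin{align*}
\hh_i=\Big\{F\in\binom{[n]}{k}\ :\ |F\cap[t+2i]|\ge t+i\Big\},\qquad \hh_0=\{F:[t]\subseteq F\}.
\end{align*}
If no member of $\hf$ is missing an element of $[t]$ we are already in the case $i=0$, and $|\hf|\le|\hh_0|=\binom{n-t}{k-t}$ with equality possible. Otherwise, take a member missing an element of $[t]$; applying shifts repeatedly (always keeping outside it the smallest element of $[t]$ that it misses) shows that $B:=[k+1]\setminus\{t\}\in\hf$, so $\hf$ genuinely differs from $\hh_0$. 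One then lets $i$ be the largest index for which some member $F\in\hf$ has $|F\cap[t+2i]|<t+i$ and argues that shiftedness forces every member into $\hh_i$: otherwise one produces, from $F$ together with a suitable second member, a pair of sets whose intersection has fewer than $t$ elements, a contradiction.

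\textbf{Step 3: counting for large $n$.} For $i\ge 1$ we have $|\hh_i|=\sum_{j\ge t+i}\binom{t+2i}{j}\binom{n-t-2i}{k-j}$, whose leading term in $n$ is $\binom{t+2i}{t+i}\binom{n-t-2i}{k-t-i}=O(n^{k-t-1})$, whereas $|\hh_0|=\binom{n-t}{k-t}=\Theta(n^{k-t})$. Hence there is $n_0(k,t)$ such that $|\hh_i|<\binom{n-t}{k-t}$ for all $1\le i\le k-t$ once $n\ge n_0(k,t)$. Combining this with Step 2 gives $|\hf|\le\max_{0\le i\le k-t}|\hh_i|=\binom{n-t}{k-t}$.

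\textbf{Main obstacle.} Steps 1 and 3 are routine. The substance is the structural claim in Step 2: the delicate point is the propagation argument, where, once one member of the shifted family is ``spread out'' across $[t+2i]$, one must show every other member is confined to $\hh_i$, and this requires a careful case analysis of which coordinates a hypothetical bad pair of members can occupy. For the present large-$n$ statement it would in fact already suffice to prove the more modest assertion that a shifted $t$-intersecting family containing $B=[k+1]\setminus\{t\}$ has size $O(n^{k-t-1})$; I would attempt this directly, using that every member then meets the $k$-set $B$ in at least $t$ points together with the shift-restrictions limiting how many members can meet $B$ in exactly $t$ points.
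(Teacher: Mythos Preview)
The paper does not prove this theorem; it is cited from \cite{EKR} as a background result, so there is no in-paper argument to compare your sketch against.

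On its own merits, Step~2 is where your plan breaks: the structural claim is false. For $t=1$ and $k\ge 3$ the Hilton--Milner family $\hh(n,k)$ defined in the paper is shifted and intersecting, yet it is not contained in any single one of your families $\hh_i$. Indeed $[2,k+1]\in\hh(n,k)$ lies outside $\hh_0=\{F:1\in F\}$, while the member $F=\{1,k+1\}\cup[n-k+3,n]$ has $|F\cap[3]|=1$ and $|F\cap[1+2i]|\le 2$ for all $1\le i\le k-1$, so $F$ lies outside every $\hh_i$ with $i\ge 1$. What \emph{is} true---recorded in this paper as \eqref{ineq-2.1}---is only the per-member statement: each individual $F\in\hf$ lies in some $\hh_{i(F)}$, giving $\hf\subset\bigcup_i\hh_i$; but that union properly contains $\hh_0$ and yields no useful upper bound by itself. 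Your fallback route (bounding $|\hf|$ once $B=[k+1]\setminus\{t\}\in\hf$) is more promising, but the crude count of $k$-sets meeting the fixed $k$-set $B$ in at least $t$ points has leading term $\binom{k}{t}\binom{n-k}{k-t}$, asymptotically a factor $\binom{k}{t}$ too large; the unspecified ``shift-restrictions'' you invoke to cut this down are precisely where the missing argument lies.
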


Let us mention that the exact value $n_0(k,t)=(k-t+1)(t+1)$ was determined by Erd\H{o}s, Ko and Rado for $t=1$, by Frankl \cite{F78} for $t\geq 15$ and finally Wilson \cite{W84} closed the gap $2\leq t\leq 14$ with a completely different proof valid for all $t$. Let us note also that the {\it full $t$-star}
\[
\hs(n,k,t) = \left\{S\in \binom{[n]}{k}\colon [t]\subset S\right\}
\]
shows that \eqref{ineq-ekr} is best possible.

The corresponding problem for {\it non-uniform} families was solved by Katona. Let us state it for $u$-union families.

\begin{thm}[\cite{Katona}]
Let $n>u>0$ and suppose that $\hf\subset 2^{[n]}$ is $u$-union. Then (i) or (ii) holds.
\begin{itemize}
  \item[(i)] $u=2d$ is even and
  \begin{align}\label{ineq-katona1}
  |\hf| \leq \sum_{0\leq i\leq d}\binom{n}{i}.
  \end{align}
  \item[(ii)] $u=2d+1$ is odd and
  \begin{align}\label{ineq-katona2}
  |\hf| \leq 2\sum_{0\leq i\leq d}\binom{n-1}{i}.
  \end{align}
\end{itemize}
\end{thm}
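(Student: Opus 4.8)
\emph{Overall approach.} I would prove this classical statement by complementation, shifting, and a lattice-path (random-walk) count --- the tools flagged in the key words --- and note at the end a variant through the Kruskal--Katona Theorem. Put $t=n-u$; by the observation recorded above, $\hf\subseteq 2^{[n]}$ is $u$-union precisely when $\hf^c$ is $t$-intersecting, and $n+t=2n-u$ has the same parity as $u$. So it suffices to bound a $t$-intersecting $\hg\subseteq 2^{[n]}$: writing $d=\lfloor u/2\rfloor$, the target is $|\hg|\le\sum_{2i\ge n+t}\binom{n}{i}$ when $n+t$ is even --- which is exactly the right-hand side $\sum_{0\le i\le d}\binom{n}{i}$ of \eqref{ineq-katona1} --- and $|\hg|\le\sum_{2i\ge n+t}\binom{n}{i}+\binom{n-1}{(n+t-1)/2}$ when $n+t$ is odd --- which becomes $2\sum_{0\le i\le d}\binom{n-1}{i}$, the right-hand side of \eqref{ineq-katona2}, after the identity $\binom{n}{j}=\binom{n-1}{j}+\binom{n-1}{j-1}$ is applied a few times.

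\emph{Shifting.} The standard left-shift (compression) operators preserve the $t$-intersecting property and the size of $\hg$, and the natural extremal families --- the up-set $\{G\colon |[n]\setminus G|\le d\}$ when $n+t$ is even, and $\{G\colon |[n-1]\setminus G|\le d\}$ when $n+t$ is odd --- are themselves shifted. So I may assume from now on that $\hg$ is shifted.

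\emph{The lattice-path count and the key lemma.} To each $G\subseteq[n]$ attach the walk $w_G\colon\{0,1,\dots,n\}\to\mathbb{Z}$ with $w_G(0)=0$ whose $k$-th step is $+1$ if $k\in G$ and $-1$ otherwise; then $w_G(n)=2|G|-n$, and for walks of a fixed parity the number ending at a prescribed height is a single binomial coefficient, so the number of $G$ with $w_G(n)\ge t$ is exactly $\sum_{2i\ge n+t}\binom{n}{i}$. The crux is to show that for a shifted $t$-intersecting $\hg$ there is an injection from $\hg$ into the set of walks ending at height $\ge t$ --- in the odd case enlarged by $\binom{n-1}{(n+t-1)/2}$ extra walks to carry the ``borderline'' members of size $(n+t-1)/2$ --- which restricts to the identity $G\mapsto w_G$ on the members with $w_G(n)\ge t$ (for the extremal families this is the whole family), and on a member $G$ with $w_G(n)<t$ is obtained by a reflection/completion of $w_G$ that is legitimate, i.e.\ again produces a walk coming from a set, and injective precisely because $\hg$ is shifted and $t$-intersecting. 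Given such an injection, counting the target set yields \eqref{ineq-katona1} and, via the binomial identity above, \eqref{ineq-katona2}.

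\emph{Main difficulty, and a fallback.} Everything delicate sits inside the key lemma: the hypothesis ``$t$-intersecting'' is a condition on pairs of sets, whereas the bound is being read off from a one-set-at-a-time injection, so the real content is to use shiftedness to convert the pairwise condition into a statement about individual walks while keeping the map injective; the odd case needs in addition the bound $\binom{n-1}{(n+t-1)/2}$ on how many borderline-size sets a shifted $t$-intersecting family can contain, and this is exactly what upgrades \eqref{ineq-katona1} to the factor-$2$ form \eqref{ineq-katona2}. A path-free alternative would be to bound the members of $\hg$ of size $\ge(n+t)/2$ trivially by $\sum_{2i\ge n+t}\binom{n}{i}$ and inject the remaining, smaller members into the non-members of size $\ge(n+t)/2$ by pushing up within complements and invoking the Kruskal--Katona Theorem to control the shadows that arise --- the shifting reduction and the closing binomial identity being common to both routes.
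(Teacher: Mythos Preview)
The paper does not actually prove this theorem: it is quoted as the classical Katona Theorem with a citation to \cite{Katona}, and the only thing the paper adds is the remark, after Theorem~\ref{prop-1.9}, that the even case for $n\ge 3.5r+1$ follows from that sharper result together with the trivial bound on $|\hf^{(<r)}|$. So there is no full ``paper's own proof'' to match; I can only assess your sketch on its merits.

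Your reduction by complementation and shifting is fine, and the bookkeeping translating the target bounds into the $t$-intersecting language is correct. The real problem is your ``key lemma'': the injection you describe --- identity on members whose walk ends at height $\ge t$, and some reflection on the others --- does not exist in the form you suggest. Take $n=4$, $t=2$, and the shifted, up-closed, $2$-intersecting family $\hg=\{\{1,2\},\{1,2,3\},\{1,2,4\},\{1,2,3,4\}\}$. The only member with end-height $<t$ is $\{1,2\}$; reflecting its walk after the last visit to height $2$ yields $\{1,2,3,4\}$, which is already in $\hg$ and is fixed by your map. So the natural reflection is not injective, and no amount of ``shifted and $t$-intersecting'' rules this collision out: indeed $|\{1,2\}\cap\{1,2,3,4\}|=2=t$. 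More generally, whenever the walk of some small $G\in\hg$ first hits height $t$ at its final up-step, the reflected image is $G\cup[s+1,n]$, a superset of $G$ that lies in $\hg$ as soon as $\hg$ is up-closed. What the random-walk method actually delivers here (Corollary~\ref{cor-2.3} and Proposition~\ref{prop-2.5}) is the layer-by-layer bound $|\hg^{(k)}|\le\binom{n}{k-t}$; summing this over all $k$ gives roughly twice the Katona bound, not the bound itself.

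Your fallback is pointed in the right direction --- Katona's own proof does go through shadows --- but ``inject the smaller members into the non-members of size $\ge(n+t)/2$'' hides all of the content. The honest mechanism is not an injection at all but Katona's intersecting shadow inequality: for a $2i$-intersecting family $\ha\subset\binom{[n]}{d+i}$ one has $|\partial^{(d-i)}\ha|\ge|\ha|$, and one then has to organise the levels so that these shadows do not overlap. Until you spell that step out (or supply a genuinely different injection, with a proof that it avoids collisions like the one above), the argument is incomplete.
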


Let us define the Katona Families $\hk(n,u)$:
\begin{align*}
\hk(n,2d)=\{K\subset [n]\colon |K|\leq d\},\
\hk(n,2d+1)=\{K\subset [n]\colon |K\setminus \{1\}|\leq d\}.
\end{align*}

These families show that both \eqref{ineq-katona1} and \eqref{ineq-katona2} are best possible. As Katona \cite{Katona} showed, for $u\leq n-2$, $u=2d$ the family $\hk(n,2d)$ is the unique family attaining equality in \eqref{ineq-katona1}. Similarly, for $u\leq n-2$, $u=2d+1$, $\hk(n,2d+1)$ is unique up to isomorphism.

Let us mention that in the case of $u=n-1$ both bounds \eqref{ineq-katona1} and \eqref{ineq-katona2} reduce to $|\hf|\leq 2^{n-1}$ which was already known to Erd\H{o}s, Ko and Rado. In this case there are very many non-isomorphic extremal families (cf. \cite{EKR}).

An intersecting family $\hf\subset \binom{[n]}{k}$ is called {\it non-trivial} if $\cap \hf=\emptyset$. Define the Hilton-Milner family  $\hh(n,k)$ and the Triangle family $\hht(n,k)$ as:
\[
\hh(n,k)= \left\{H\in \binom{[n]}{k}\colon 1\in H,\ H\cap [2,k+1]\neq \emptyset\right\}\cup \{[2,k+1]\}
\]
and
\[
\hht(n,k)= \left\{F\in \binom{[n]}{k}\colon |F\cap [3]|\geq 2\right\}.
\]
It is easy to see that they are both non-trivial intersecting.

For the $t=1$, $n>2k$ case of the  Erd\H{o}s-Ko-Rado Theorem, Hilton and Milner proved a very important stability result.

\begin{thm}
Let $n> 2k$ and let $\hf\subset \binom{[n]}{k}$ be non-trivial intersecting, then
\begin{align}\label{ineq-nontrival}
|\hf| \leq \binom{n-1}{k-1}-\binom{n-k-1}{k-1}+1.
\end{align}
\end{thm}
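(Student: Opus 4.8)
The plan is to split the argument according to the \emph{covering number} $\tau=\tau(\hf)$, the least size of a set that meets every member of $\hf$. Since every $F\in\hf$ meets all other members, $F$ is itself a cover, so $\tau\le k$; and since $\hf$ is non-trivial ($\cap\hf=\emptyset$) no single point is a cover, so $\tau\ge 2$. The extremal family $\hh(n,k)$ has $\tau=2$, so I expect the stated bound to be attained only when $\tau=2$, while for $\tau\ge 3$ the family $\hf$ should be strictly smaller; I would treat the two ranges separately.

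\emph{Case $\tau=2$.} Fix a $2$-element cover $\{x,y\}$ and set
\[
\ha=\bigl\{F\setminus\{x\}\colon F\in\hf,\ x\in F,\ y\notin F\bigr\},\qquad
\hb=\bigl\{F\setminus\{y\}\colon F\in\hf,\ y\in F,\ x\notin F\bigr\},
\]
two families of $(k-1)$-subsets of the $(n-2)$-set $[n]\setminus\{x,y\}$. If $\ha=\emptyset$ then every member of $\hf$ contains $y$, contradicting $\tau=2$; hence $\ha\ne\emptyset$, and likewise $\hb\ne\emptyset$. For $F\setminus\{x\}\in\ha$ and $F'\setminus\{y\}\in\hb$ we have $x,y\notin F\cap F'$, so $(F\setminus\{x\})\cap(F'\setminus\{y\})=F\cap F'\ne\emptyset$: the pair $\ha,\hb$ is cross-intersecting. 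As at most $\binom{n-2}{k-2}$ members of $\hf$ contain both $x$ and $y$, we get $|\hf|\le|\ha|+|\hb|+\binom{n-2}{k-2}$. It then suffices to prove the cross-intersecting estimate: \emph{if $\ha,\hb\subseteq\binom{[m]}{\ell}$ are non-empty and cross-intersecting with $m>2\ell$, then $|\ha|+|\hb|\le\binom{m}{\ell}-\binom{m-\ell}{\ell}+1$.} Indeed, with $m=n-2$, $\ell=k-1$ and the identity $\binom{n-2}{k-1}+\binom{n-2}{k-2}=\binom{n-1}{k-1}$ this gives exactly $|\hf|\le\binom{n-1}{k-1}-\binom{n-k-1}{k-1}+1$, and tracing back the equality case identifies the extremal configuration as $\hh(n,k)$ (and, when $k=3$, also $\hht(n,3)$). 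For the cross-intersecting estimate itself I would shift $\ha$ and $\hb$ \emph{simultaneously} — the operation $S_{ij}$ applied to both preserves cross-intersection, non-emptiness, and both cardinalities — and once both are shifted, $\hb$ is a down-set, hence contains $[\ell]$, so $\ha$ lies among the $\ell$-subsets meeting $[\ell]$; a short count of how many of the latter can still meet every member of $\hb$ yields the trade-off $|\ha|\le\binom{m}{\ell}-\binom{m-\ell}{\ell}+1-|\hb|$.

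\emph{Case $\tau\ge 3$.} Here I claim $|\hf|<\binom{n-1}{k-1}-\binom{n-k-1}{k-1}+1$ for all $n>2k$, so this case never approaches the bound. Since $\tau\ge 3$, $\hf$ is forced to be essentially covered by a bounded configuration: one extracts a few members $F_1,\dots,F_s\in\hf$ and uses that every member of $\hf$ meets each $F_i$, together with how it must be distributed over the pieces $F_i\setminus F_j$ and $F_i\cap F_j$; counting the $k$-sets compatible with all these restrictions — peeling the constraints off one at a time, since plain inclusion--exclusion is far too wasteful when $n$ is close to $2k$ — keeps the total strictly below $|\hh(n,k)|$. This is where the hypothesis $n>2k$ genuinely enters.

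The main obstacle is, first, obtaining the \emph{sharp} constant in the cross-intersecting estimate (the shifting reduction is routine, but the final count must be exact rather than merely asymptotic) and, second, making the $\tau\ge 3$ bound valid \emph{uniformly} down to the boundary $n=2k+1$, where all the crude estimates become tight; the small cases $k\le 2$ are checked directly.
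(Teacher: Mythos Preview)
The paper does not prove this statement; it is quoted as the Hilton--Milner theorem (reference \cite{HM67}) and used only as background, so there is no in-paper proof to compare your proposal against.

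On its own merits: your $\tau=2$ case is set up correctly. The decomposition via a $2$-cover $\{x,y\}$ into $\ha$, $\hb$, and the sets containing both is right, the non-emptiness argument is sound, and the target cross-intersecting inequality
\[
|\ha|+|\hb|\le\binom{m}{\ell}-\binom{m-\ell}{\ell}+1\qquad(\ha,\hb\ne\emptyset,\ m>2\ell)
\]
is a true lemma. Your shifting reduction to $[\ell]\in\hb$ is fine, but the phrase ``a short count \dots\ yields the trade-off'' is precisely where the content lies; this step is not a triviality, and you should either cite the lemma explicitly or supply the Kruskal--Katona/Hilton argument that proves it.

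The genuine gap is the $\tau\ge 3$ case. You assert that ``peeling the constraints off one at a time \dots\ keeps the total strictly below $|\hh(n,k)|$'' but provide no mechanism, and your own closing caveat about uniformity down to $n=2k+1$ is exactly the obstruction. The crude bounds one obtains from forcing intersections with two or three fixed members are of order $k^2\binom{n-2}{k-2}$ or $k^3\binom{n-3}{k-3}$; for $n$ near $2k$ these quantities \emph{exceed} $\binom{n-1}{k-1}$, so no amount of ``peeling'' in this style will close the gap without a substantially sharper structural input. A working proof here requires either the full Frankl analysis of intersecting families with $\tau\ge 3$, or an entirely different route (e.g.\ a direct shifting argument that avoids the $\tau$-split). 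As written, the $\tau\ge 3$ paragraph is a statement of hope rather than a proof sketch.
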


The construction $\hh(n,k)$ shows that \eqref{ineq-nontrival} is best possible.

The corresponding statement for the Katona Theorem was proved by Frankl \cite{F17}.  Define $\hk^*(n,u)$ as:
\[
\hk^*(n,2d) =\hk(n,2d)\setminus \left\{H\in \binom{[n]}{d}\colon H\cap [d+1]=\emptyset\right\}\cup \{[d+1]\}
\]
and
\[
\hk^*(n,2d+1) =\hk(n,2d+1)\cup \left\{H\in \binom{[n]}{d+1}\colon 1\in H, H\cap [2,d+2]=\emptyset\right\}\cup \{[2,d+2]\}.
\]

\begin{thm}[\cite{F17}]
Suppose that $\hf\subset 2^{[n]}$ is $u$-union, $2\leq u< n$ and $\hf\not\subset \hk(n,u)$ holds. Then
\[
|\hf|\leq |\hk^*(n,u)|.
\]
\end{thm}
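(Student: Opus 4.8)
The plan is to begin by normalising the family through shifting. Each elementary shift $S_{ij}$ ($1\le i<j\le n$) fixes $|\hf|$ and preserves the $u$-union property: passing to complements turns $S_{ij}$ into an ``up''-shift, and the claim becomes the standard fact that shifting preserves $t$-intersection. When $u=2d$ the hypothesis $\hf\not\subset\hk(n,2d)$ is preserved as well, since $\hk(n,2d)=\{K:|K|\le d\}$ is determined purely by cardinalities and shifting does not alter set sizes. When $u=2d+1$ a shift may push $\hf$ inside $\hk(n,2d+1)$; but then $|\hf|\le|\hk(n,2d+1)|<|\hk^*(n,2d+1)|$ and there is nothing to prove, so in every case I may assume $\hf$ is shifted and still not contained in $\hk(n,u)$.

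Consider first $u=2d$. Let $b=\max_{F\in\hf}|F|\ge d+1$. Shiftedness forces $[b]\in\hf$, whence $|F\setminus[b]|\le 2d-b$ for all $F\in\hf$. If $b=2d$ then $\hf\subset 2^{[2d]}$, so $|\hf|\le 2^{2d}$, which lies below $|\hk^*(n,2d)|$ unless $n$ is small relative to $d$; those finitely many pairs $(n,d)$ are treated directly. Thus assume $d+1\le b\le 2d-1$. Every $d$-set $F\in\hf$ then has $|F\cap[b]|\ge b-d\ge 1$, so it meets $[d+1]$; hence $\hf$ has at most $\binom{n}{d}-\binom{n-d-1}{d}$ members of size $d$ and, trivially, at most $\sum_{i<d}\binom{n}{i}$ of smaller size. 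As $|\hk^*(n,2d)|=\sum_{i\le d}\binom{n}{i}-\binom{n-d-1}{d}+1$, what remains to prove is
\[
\#\{F\in\hf:|F|>d\}\ \le\ 1+\Big(\textstyle\binom{n}{d}-\binom{n-d-1}{d}-\#\{F\in\hf:|F|=d\}\Big)+\sum_{i<d}\Big(\binom{n}{i}-\#\{F\in\hf:|F|=i\}\Big),
\]
i.e.\ every ``overflow'' member beyond the first must be paid for by a missing set of size $\le d$. The easy extreme case is clean: if $\hf$ contains \emph{all} $d$-sets meeting $[d+1]$, then because $n\ge 2d+1$ any other $(d+1)$-set, and indeed any set of size in $[d+2,2d-1]$, is forced to clash in the $u$-union with one of these $d$-sets, so $[d+1]$ is the only member of size $>d$ and the displayed inequality holds with all parentheses zero.

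For the general case I would invoke the random walk method: encode $F\in\hf$ by the lattice path that steps up at coordinate $i$ when $i\in F$ and down otherwise. Shiftedness together with $u$-union confines all these paths to an explicit region whose admissible paths number exactly $|\hk(n,u)|$ by the reflection principle; the members of $\hf$ of size exceeding $d$ are precisely those whose path leaves the region defining $\hk(n,u)$, and since they lie in $\hf$ the $u$-union condition forbids the whole fan of paths clashing with them — counting those excluded paths by reflection gives precisely the loss $\binom{n-d-1}{d}-1$ demanded above. (A Kruskal--Katona shadow estimate on the cross-intersecting pair formed by the overflow sets and the $d$-sets would also do, but the path count is cleaner.) For $u=2d+1$ I would split at the distinguished point $1$: with $\hf_1=\{F\setminus\{1\}:1\in F\in\hf\}$ and $\hf_0=\{F\in\hf:1\notin F\}$, both on $[2,n]$, the $u$-union property makes $\hf_1$ a $2d$-union family, $\hf_0$ a $(2d+1)$-union family, and $\hf_0,\hf_1$ cross-$2d$-union, while the failure of $\hf\subset\hk(n,2d+1)$ passes to one of the two components. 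Applying the even-case analysis on $[2,n]$ to $\hf_1$ and the Katona bound together with the cross condition to $\hf_0$, then adding, yields $|\hf|\le|\hk(n,2d+1)|+1=|\hk^*(n,2d+1)|$, the one extra set being $[2,d+2]$ in the $\hf_0$-part. The families $\hk^*(n,u)$ themselves show the bounds are attained.

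The one genuinely delicate point is the exact balancing inequality displayed above, and its cross-component analogue in the odd case: the naive estimate of the overflow via ``at most $2d-b$ elements outside $[b]$'' is far too weak, and the bound has to land on $|\hk^*(n,u)|$ exactly, which is what forces the careful reflection-principle (or Kruskal--Katona) bookkeeping for the excluded paths. A lesser nuisance is the band of small $n$ just above $u$, where the generic estimates degrade and a short ad hoc argument is required.
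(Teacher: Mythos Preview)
The paper does not contain a proof of this statement: it is quoted from \cite{F17} as background, with no argument supplied here. So there is no ``paper's own proof'' to compare your proposal against. That said, let me comment on the proposal itself.

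Your outline has the right shape --- shift, locate the maximum-size member $[b]$, bound the $d$-level by observing that every $d$-set must meet $[d+1]$ (your pigeonhole $|F\cap[b]|\ge b-d>|[d+2,b]|$ is correct), and then account for the overflow against the deficit below level $d$. But the proposal stops precisely where the content is: you state the balancing inequality
\[
\#\{F\in\hf:|F|>d\}\ \le\ 1+\sum_{i\le d}\Bigl(\text{level-}i\text{ deficit}\Bigr)
\]
and say the reflection principle ``gives precisely the loss $\binom{n-d-1}{d}-1$'', without doing the bookkeeping. This is the entire theorem. The reflection count you allude to bounds the \emph{number} of sets whose walk crosses a given line, but what you need is a \emph{trade-off}: each extra overflow member must exclude at least one set of size $\le d$, and that requires either a Kruskal--Katona shadow argument or a careful pairing, not a single reflection count. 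Your ``easy extreme case'' already shows the difficulty: for $|G|\ge d+2$ you need a $d$-set meeting $[d+1]$ with $|F\cap G|\le |G|-d-1$, and you have not shown such $F$ exists (it does, but the argument is not the one you wrote). The odd case is handled by the same hand-wave.

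In short: the skeleton is reasonable and close in spirit to how \cite{F17} proceeds, but as written this is a plan rather than a proof --- the decisive inequality is asserted, not established.
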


Let us note that $|\hh(n,k)\setminus \hs(n,k,1)|=1$ and $|\hk^*(n,u)\setminus \hk(n,u)|=1$, i.e., both families are in a sense extremely close to the original optimal families. For $x\in [n]$, define \[
\hf(\bar{x})=\{F\in \hf\colon x\notin \hf\}.
\]
Let us define the {\it diversity} $\gamma(n,k)$ for $n>2k$ by
\[
\gamma(n,k) =\max_{\hf}\min_{x\in [n]} |\hf(\bar{x})|,
\]
where the maximum is taken over all 1-intersecting families $\hf\subset \binom{[n]}{k}$. The problem of determining $\gamma(n,k)$ was raised by Katona (cf. \cite{LP}). Improving earlier results (\cite{LP}, \cite{F17-2}, \cite{Ku1}), it is proved that

\begin{thm}[\cite{F2020}, \cite{FW2022-3}]\label{thm-diveristy}
For $n>36k$,
\begin{align}\label{ineq-diversity}
\gamma(n,k) = \binom{n-3}{k-2}.
\end{align}
\end{thm}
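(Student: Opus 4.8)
\medskip
\noindent\textbf{Proof proposal.}\quad
For the lower bound it is enough to produce one good family, and the Triangle family $\hht(n,k)$ works: any two of its members meet $[3]$ in at least two points and hence share a point of $[3]$, so $\hht(n,k)$ is $1$-intersecting; for $x\in[3]$ we have $\hht(n,k)(\bar x)=\{F:\,[3]\setminus\{x\}\subseteq F\}$, of size exactly $\binom{n-3}{k-2}$, while a short count gives $|\hht(n,k)(\bar x)|>\binom{n-3}{k-2}$ for $x\notin[3]$. Thus $\min_x|\hht(n,k)(\bar x)|=\binom{n-3}{k-2}$ and $\gamma(n,k)\ge\binom{n-3}{k-2}$.

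For the upper bound, note $\gamma(\hf)=|\hf|-\max_x\deg_\hf(x)$, and that adding a single set to $\hf$ raises $|\hf|$ by $1$ and the maximum degree by $0$ or $1$, so the diversity never decreases under adding sets; hence I may assume $\hf$ is maximal intersecting and argue by its covering number $\tau=\tau(\hf)$. If $\tau=1$ then $\gamma(\hf)=0$. The easy case is $\tau=2$: let $\{a,b\}$ be a cover, so every member of $\hf(\bar a)$ contains $b$ and every member of $\hf(\bar b)$ contains $a$; removing $b$, resp.\ $a$, yields $(k-1)$-uniform families $\ha',\hb'$ on the $(n-2)$-set $[n]\setminus\{a,b\}$, and these are cross-intersecting since $\hf$ is intersecting. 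As $n-2\ge 2(k-1)$, the cross-intersecting form of the Erd\H{o}s--Ko--Rado theorem (Pyber) gives $|\ha'|\,|\hb'|\le\binom{n-3}{k-2}^2$, whence $\min(|\hf(\bar a)|,|\hf(\bar b)|)\le\binom{n-3}{k-2}$; since the diversity is a minimum over all of $[n]$, $\gamma(\hf)\le\binom{n-3}{k-2}$. (Chasing equality in Pyber's inequality recovers exactly the Triangle family.)

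The substantial case, and the one that consumes the hypothesis $n>36k$, is $\tau\ge3$. Let $x$ be a most popular element, so $\gamma(\hf)=|\hf(\bar x)|$ and it suffices to bound this quantity. Since no $\{x,y\}$ covers $\hf$, the family $\hf(\bar x)$ is non-trivially intersecting on $[n]\setminus\{x\}$, so Hilton--Milner gives $|\hf(\bar x)|\le\binom{n-2}{k-1}-\binom{n-k-2}{k-1}+1$; but this exceeds $\binom{n-3}{k-2}$ by roughly a factor $k$, so size bounds alone are useless (and indeed families of covering number $\ge3$ can be of order $\binom{n-1}{k-1}$). The extra leverage is that $\hf(\bar x)$ is cross-intersecting with the link $\hf_x=\{F\setminus\{x\}:x\in F\in\hf\}$, whose size $\deg_\hf(x)$ is the \emph{largest} degree. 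I would use this quantitatively: after a dichotomy on $\deg_\hf(x)$ (when it is small, $|\hf|$ itself already lies below $\binom{n-3}{k-2}$), apply shifting — which preserves cardinalities, the intersecting property of $\hf(\bar x)$, and the cross-intersecting relation — to bring the pair $(\hf_x,\hf(\bar x))$ to a compressed form, and then invoke the random walk method to bound the number of $k$-sets that can lie in $\hf(\bar x)$ while meeting every set of $\hf_x$ and every other set of $\hf(\bar x)$; pushed through carefully this should force $|\hf(\bar x)|\le\binom{n-3}{k-2}$ once $n$ is a large enough multiple of $k$.

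The main obstacle is exactly this last case. One cannot collapse it to a single clean extremal configuration by a size bound, nor by shifting the whole family (diversity is not monotone under unrestricted shifting, and $\tau\ge3$ families need not be near-stars), so everything must route through a sharp analysis of the cross-intersecting pair \emph{overflow $\times$ link}, carried out with shifting and random walks; arranging that this analysis delivers the \emph{exact} constant $\binom{n-3}{k-2}$ rather than merely an asymptotic bound, and that the admissible range shrinks to $n>36k$, is where essentially all the difficulty lies.
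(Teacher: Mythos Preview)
This theorem is not proved in the present paper at all: it is quoted from \cite{F2020} and \cite{FW2022-3} and used as a black box in the proof of Theorem~\ref{thm-main2}. So there is no ``paper's own proof'' to compare your proposal against; I can only assess your argument on its own merits.

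Your lower bound via the Triangle family is correct, and your treatment of the covering-number cases $\tau=1$ and $\tau=2$ is clean: the reduction to a cross-intersecting pair of $(k-1)$-uniform families on an $(n-2)$-set and the application of Pyber's product bound $|\ha'|\,|\hb'|\le\binom{n-3}{k-2}^2$ are exactly right, and your monotonicity observation justifying ``WLOG maximal'' is fine.

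The genuine gap is the case $\tau\ge 3$. What you have written there is not a proof but a programme: you correctly identify that size bounds alone (Hilton--Milner on $\hf(\bar x)$) are too weak by a factor $\sim k$, and you correctly point to the cross-intersecting constraint between the link $\hf_x$ and the overflow $\hf(\bar x)$ as the missing leverage. But ``apply shifting \dots\ and then invoke the random walk method \dots\ pushed through carefully this should force $|\hf(\bar x)|\le\binom{n-3}{k-2}$'' is a hope, not an argument. Several concrete difficulties are glossed over: shifting the pair $(\hf_x,\hf(\bar x))$ preserves cross-intersection and the intersecting property of $\hf(\bar x)$, but it does \emph{not} preserve the hypothesis $\tau(\hf)\ge 3$, nor does it preserve that $x$ realises the maximum degree, so one cannot simply assume initiality of the pair while keeping the structural information you need; and the random-walk count you allude to does not by itself produce the exact constant $\binom{n-3}{k-2}$ without a further case analysis (this is precisely why the cited papers are nontrivial and why the range had to be pushed down in stages through \cite{LP}, \cite{F17-2}, \cite{Ku1}, \cite{F2020}, \cite{FW2022-3}). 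You yourself flag this as ``where essentially all the difficulty lies''; as written, that difficulty is not overcome.
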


In case of the Katona Theorem, for $u=2d$, there is no special element in $\hk(n,u)$. Therefore we prefer a new name for the maximum of $|\hf\setminus \hk(n,u)|$.

\begin{defn}
For $n>2d+1$ define the {\it overflow} $\sigma_{2d}(\hf)$ of $\hf$ as $|\hf\setminus \hk(n,2d)|$ and set
\[
\sigma(n,2d) = \max\left\{\sigma_{2d}(\hf)\colon \hf\subset 2^{[n]} \mbox{ is $2d$-union}\right\}.
\]
\end{defn}

A natural family  with relatively large overflow is
\[
\hb(n,2d)=\{B\subset [n]\colon |B\setminus [2]|\leq d-1\}.
\]
Obviously,
\[
\hb(n,2d)\setminus \hk(n,2d)=\left\{B\in \binom{[n]}{d+1}\colon [
2]\subset B\right\}.
\]
Our main result shows that $\sigma(n,2d)=\binom{n-2}{d-1}$ for $n\geq 6d$.

\begin{thm}\label{thm-main}
Suppose that $n\geq 6d$, $\hf\subset 2^{[n]}$ is $2d$-union. Then
\begin{align}\label{ineq-main}
|\hf\setminus \hk(n,2d)|\leq \binom{n-2}{d-1}.
\end{align}
\end{thm}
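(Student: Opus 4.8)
The plan is to reduce to shifted families, record the easy bound on the $(d+1)$-uniform ``layer'', and then control everything at once by the random walk method.

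\smallskip
\noindent\textbf{Step 1: shifting and basic structure.} The shifting operations $S_{ij}$ preserve the $2d$-union property and do not change the size of any member; since $\hk(n,2d)=\{K\colon |K|\le d\}$ is shift-invariant, they also preserve $|\hf\setminus\hk(n,2d)|$. So we may assume $\hf$ is shifted. Put $\hg=\{F\in\hf\colon|F|\ge d+1\}$ and $\hg_k=\hg\cap\binom{[n]}{k}$; then $\hg$ is shifted and $2d$-union, each $\hg_k$ is a down-set in the shifting order, and $\hg_k=\emptyset$ unless $d+1\le k\le 2d$ (since $|F\cup F|=|F|\le 2d$). For $F\in\hg_k$, $F'\in\hg_{k'}$ the $2d$-union condition gives $|F\cap F'|\ge k+k'-2d$; in particular $\hg_{d+1}$ is $2$-intersecting, so $|\hg_{d+1}|\le\binom{n-2}{d-1}$ already by the Erd\H{o}s--Ko--Rado/Wilson theorem (valid since $n\ge 6d\ge n_0(d+1,2)=3d$). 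This does not close the problem, and one cannot iterate layer by layer: $\hg_{d+j}$ is $2j$-intersecting, but $n_0(d+j,2j)=(d-j+1)(2j+1)$ is of order $d^2$ for $j\approx d/2$, far above $6d$. Hence a global argument is needed, and it suffices to prove $\sum_{k\ge d+1}|\hg_k|\le\binom{n-2}{d-1}$.

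\smallskip
\noindent\textbf{Step 2: the walk lemma.} Since $\hf$ is $2d$-union, $\hf^c$ is $(n-2d)$-intersecting, and relabelling $i\mapsto n+1-i$ makes it shifted and $(n-2d)$-intersecting. Applying the random walk lemma for shifted $t$-intersecting families --- every member's $\pm1$ lattice path meets the line $y=x+t$ --- and translating back, for every $F\in\hf$ there is an odd $\ell$ with $1\le\ell\le 2d+1$ and
\[
|F\cap[\ell,n]|=\tfrac{2d+1-\ell}{2},\qquad\text{equivalently}\qquad |F\cap[\ell-1]|=|F|-\tfrac{2d+1-\ell}{2}.
\]
If $|F|\ge d+1$ then $\ell=1$ is impossible, so $\ell\ge 3$; more precisely $\ell\ge 2|F|-2d+1$, so a member of $\hg_{d+j}$ forces $\ell\ge 2j+1$, and for $\ell=3$ the identity forces $[2]\subseteq F$ and $|F|=d+1$.

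\smallskip
\noindent\textbf{Step 3: the count.} Let $\ell(F)$ be the first-passage index of the reversed walk (the largest admissible $\ell$) and partition $\hg$ by $\ell(F)\in\{3,5,\dots,2d+1\}$. For a fixed odd $\ell_0$, a ballot/cycle-lemma estimate bounds the number of choices for $F\cap[\ell_0,n]$ by roughly $\tfrac{n-2d}{n-\ell_0+1}\binom{n-\ell_0+1}{(2d+1-\ell_0)/2}$, while the choices for $F\cap[\ell_0-1]$ are restricted by shiftedness (a down-set) together with the large pairwise intersections among members of $\hg$. The class $\ell_0=3$ consists of $(d+1)$-sets containing $[2]$, hence contributes at most $\binom{n-2}{d-1}$, and the ballot factor (which is $<1$) leaves genuine slack; the classes $\ell_0\ge5$ are of strictly smaller order once $n\ge 6d$, and the slack absorbs their total. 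Summing gives $\sum_{k\ge d+1}|\hg_k|\le\binom{n-2}{d-1}$. Equality is attained by $\hb(n,2d)$, whose members of size $>d$ are exactly the $(d+1)$-sets containing $[2]$.

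\smallskip
\noindent\textbf{Main obstacle.} The hard part is the estimate in Step 3 for the classes $\ell_0\ge 5$. The crude bound --- the ballot count times $2^{\ell_0-1}$ for a ``free'' initial segment $F\cap[\ell_0-1]$ --- is already too weak when $d$ is large, so one must genuinely exploit the shifted/down-set structure on the coordinates $[\ell_0-1]$, together with the $2d$-union condition (which forces members of $\hg$ to meet in at least two points and thereby constrains their initial segments), to reduce the initial-segment factor to an essentially bounded quantity. Controlling this factor and then balancing it against the ballot factors $\binom{n-\ell_0+1}{(2d+1-\ell_0)/2}$ over all $\ell_0$ is precisely what pins the hypothesis at $n\ge 6d$.
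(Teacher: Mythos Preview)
Your Steps 1 and 2 are correct: the reduction to shifted families is standard, and the reversed-walk identity $|F\cap[\ell,n]|=\tfrac{2d+1-\ell}{2}$ is exactly what one gets by complementing and applying Corollary~\ref{cor-2.3}. The first-passage partition by $\ell(F)$ is also a legitimate decomposition of $\hg$.

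The proof breaks down in Step 3, and you essentially say so yourself in the ``Main obstacle'' paragraph. Concretely: for $\ell_0=2s+1$ the ballot count for the tail is $a_s=\tfrac{n-2d}{n-2s}\binom{n-2s}{d-s}$, and the number of admissible heads in $[2s]$ (of size $s+j$ for $1\le j\le s$) is $h_s=\sum_{j=s+1}^{2s}\binom{2s}{j}$; with $h_1=1$, $h_2=5$, $h_3=22,\dots$ one checks that at $n=6d$ the ratio $h_{s+1}a_{s+1}/(h_s a_s)$ is about $25/36$, so $\sum_s h_s a_s\approx a_1/(1-25/36)\approx 3.2\,a_1\approx 2.1\binom{n-2}{d-1}$, well above the target. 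The ``genuine slack'' from the ballot factor in the $\ell_0=3$ class is only about $\tfrac{2d-2}{n-2}\binom{n-2}{d-1}\approx\tfrac13\binom{n-2}{d-1}$ and does not absorb the rest. Your suggested fix---``exploit the shifted/down-set structure on $[\ell_0-1]$ together with the $2d$-union condition to reduce the initial-segment factor to an essentially bounded quantity''---is not an argument: shifting inside $[\ell_0-1]$ leaves the tail and the first-passage index unchanged, so it does not directly cut down $h_s$; and the $2d$-union condition has already been spent to obtain the walk constraint, so you would need to say precisely which additional inequality it yields on the heads. As written, the proposal stops exactly where the difficulty begins.

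For comparison, the paper does not partition by first passage. It first disposes of the case $\hf^{(\geq d+2)}=\emptyset$ by Erd\H{o}s--Ko--Rado, and otherwise uses $[d+2]\in\hf$ together with the cross $3$-intersection between $\hf^{(d+1)}$ and $\hf^{(d+2)}$ to exclude a specific set from $\hf^{(d+1)}$. It then splits $\hf^{(d+1)}$ into the part containing $[2]$ and the rest, and does a two-case analysis according to whether $(1,2,4,6,\dots,2d)\in\hf^{(d+1)}$; in Case~2 a further parameter $j$ (the minimal index with $(1,2,j,j+2,\dots)\notin\hf^{(d+1)}$) is introduced, and the presence of $(1,2,j-1,j+1,\dots)$ is used via cross $(i+1)$-intersection to control each $\hf^{(d+i)}$ through \eqref{ineq-walks1}, while \eqref{ineq-walks2} handles the $[2]$-part. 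The point is that the existence of a \emph{single} extremal member of $\hf^{(d+1)}$ simultaneously bounds all higher layers---this is the mechanism your Step~3 is missing. If you want to salvage the first-passage approach, you would need an analogous device that ties the head-counts across classes to a common parameter of $\hf$ rather than treating them as free.
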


Recall the standard notations
\[
\hf^{(\ell)}=\{F\in \hf\colon |F|=\ell\} \mbox{ and } \hf^{(\geq \ell)}=\{F\in \hf\colon |F|\geq \ell\}.
\]
With these notations $\hf\setminus \hk(n,2d)=\hf^{(\geq d+1)}=\hf^{(d+1)}\cup\hf^{(d+2)}\cup  \ldots\cup \hf^{(n)}$. Moreover, if $\hf$ is $u$-union then $\hf^{(\ell)}=\emptyset$ for $\ell>u$.

Let us define one more $2d$-union family:
\[
\hd=\{D\subset [n]\colon |
D\setminus [4]|\leq d-2\}.
\]
Its overflow is
\[
|\hd\setminus \hk(n,2d)|=|\hd^{(d+1)}|+|\hd^{(d+2)}|=5\binom{n-4}{d-2}+\binom{n-4}{d-3}.
\]
For $n\leq 4d-2$,
\begin{align*}
5\binom{n-4}{d-2}+\binom{n-4}{d-3}-\binom{n-2}{d-1}&=4\binom{n-4}{d-2}+\binom{n-3}{d-2}-\binom{n-2}{d-1}\\[5pt]
&=4\binom{n-4}{d-2}-\binom{n-3}{d-1}\\[5pt]
&=\frac{4d-1-n}{d-1}\binom{n-4}{d-2}> 0.
\end{align*}
This shows that \eqref{ineq-main} does not hold for $n<4d-1$.

Let
\[
\hk_x(n,2d+1)=\{K\subset [n]\colon |K\setminus \{x\}|\leq d\}.
\]
Define the {\it overflow} $\sigma_{2d+1}(\hf)$ of $\hf$ as $\min_{x\in [n]}|\hf\setminus \hk_x(n,2d)|$.

For the odd case, we obtain the following result.

\begin{thm}\label{thm-main2}
Suppose that $n> 36(d+1)$, $\hf\subset 2^{[n]}$ is $(2d+1)$-union. Then
\begin{align}\label{ineq-main2}
\sigma_{2d+1}(\hf)\leq 2\binom{n-3}{d-1}.
\end{align}
\end{thm}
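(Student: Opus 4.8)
The plan is to reduce the odd case to a two–part estimate on the uniform "overflow levels'' $\hf^{(d+1)}$ and $\hf^{(d+2)}$, exactly as in the even case, but now working with the shifted structure that singles out one element. First I would apply the shifting operators $S_{ij}$ to $\hf$; since shifting preserves the $(2r{+}1)$-union property (it can only decrease unions) and does not decrease any of the quantities $|\hf\setminus\hk_x(n,2d{+}1)|$ in a controlled way, I may assume $\hf$ is shifted, or at least shifted except for the distinguished coordinate $1$. For a shifted $(2d{+}1)$-union family the natural distinguished element is $1$, and one checks $\hk_1(n,2d{+}1)=\hk(n,2d{+}1)$, so it suffices to bound $|\hf\setminus\hk_1(n,2d{+}1)|=|\hf^{(\geq d+2)}(\bar 1)|+|\{F\in\hf^{(\geq d+1)}:1\in F\}\setminus\hk_1|$; after sorting out the definition this comes down to controlling $\hf^{(d+1)}(\bar 1)$, $\hf^{(d+2)}(\bar 1)$ together with a few higher levels, since a shifted $(2d{+}1)$-union family has all sets of size $\geq d{+}2$ containing $1$ except for a bounded remainder, and $\hf^{(\ell)}=\emptyset$ for $\ell>2d{+}1$.

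The core is the random walk / Kruskal–Katona type argument. Split every $F\in\hf$ at the element $1$ and look at $G=F\setminus\{1\}\subset[2,n]$. The $(2d{+}1)$-union condition on $\hf$ translates, for the sets avoiding $1$, into a $(2d{+}1)$-union condition, and for pairs one of which contains $1$, into the statement that $(F\setminus\{1\})\cup(F'\setminus\{1\})$ has size at most $2d$. So the family $\hg(\bar 1):=\{F\setminus\{1\}:F\in\hf^{(\geq d+1)}(\bar 1)\}$ on the ground set $[2,n]$ (of size $n-1$) has the property that the union of any two members is at most $2d$ — i.e. it is a $2d$-union family on $n-1>36(d+1)>6d$ points — so Theorem \ref{thm-main} (the even case) applies and yields $|\hg(\bar 1)\setminus\hk(n-1,2d)|\leq\binom{n-3}{d-1}$. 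That already caps the "level $\geq d{+}1$, avoiding $1$'' part by $\binom{n-3}{d-1}$. The matching $\binom{n-3}{d-1}$ for the part of the overflow coming from sets containing $1$ is obtained the same way after deleting $1$: these correspond to a $2d$-union family on $[2,n]$ whose overflow beyond $\hk(n-1,2d)$ is again at most $\binom{n-3}{d-1}$ by Theorem \ref{thm-main}. Adding the two pieces gives $2\binom{n-3}{d-1}$.

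The main obstacle — and the reason the hypothesis jumps from $n\geq 6d$ to $n>36(d+1)$ — is the bookkeeping at the very top levels and the legitimacy of the shifting reduction for the $\min_x$ quantity. Shifting a family can move mass onto the coordinate $1$, so one must argue that $\sigma_{2d+1}$ does not decrease under shifting, which requires the diversity-type control of Theorem \ref{thm-diveristy} (this is exactly where a bound like $n>36k$ enters, with $k=d+1$) to guarantee that after shifting the distinguished element is indeed $1$ and that the pieces avoiding $1$ have not been artificially inflated. I would handle this by first passing to the compressed family, invoking Theorem \ref{thm-diveristy} to locate the optimal $x$ at coordinate $1$, and only then performing the split-at-$1$ decomposition above; the remaining estimates are then routine applications of Theorem \ref{thm-main} and elementary binomial manipulations. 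A secondary technical point is that a shifted $(2d{+}1)$-union family need not have \emph{all} large sets through $1$, only all but a negligible set, so one has to verify that this remainder is absorbed in the slack between $2\binom{n-3}{d-1}$ and the true maximum, which is why the construction $\hk^*(n,2d+1)$ (with its single extra set) is the expected extremal configuration and the clean bound $2\binom{n-3}{d-1}$ is stated rather than something tight to the last term.
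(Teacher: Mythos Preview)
Your decomposition has a genuine gap in the second piece. You assert that the subfamily $\hf(\bar 1)=\{F\in\hf:1\notin F\}$ is $2d$-union on $[2,n]$, and then invoke Theorem~\ref{thm-main}. But two members of $\hf$ that both avoid $1$ are only guaranteed $|F\cup F'|\le 2d+1$, not $2d$; nothing in the $(2d{+}1)$-union hypothesis, and nothing that shifting buys you, forces the extra drop. A concrete shifted counterexample is $\hf=\hk_1(n,2d{+}1)\cup\{[2,d{+}2]\}$: here $[2,d{+}2]$ and $[d{+}3,2d{+}2]$ both avoid $1$, both lie in $\hf$, and their union has size $2d{+}1$. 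So Theorem~\ref{thm-main} simply does not apply to the $\bar 1$ part, and that half of your estimate is unsupported. (Your first piece, where you delete $1$ from the sets containing it to obtain a genuine $2d$-union family on $[2,n]$ and then apply Theorem~\ref{thm-main}, is correct and bounds $|\hf^{(\ge d+2)}(1)|$ by $\binom{n-3}{d-1}$ --- but this leaves $|\hf^{(\ge d+2)}(\bar 1)|$ still to be handled, and you have pushed it into the broken half.)

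You have also misdiagnosed the role of Theorem~\ref{thm-diveristy} and the source of the constraint $n>36(d{+}1)$. The paper does \emph{not} shift $\hf$ and then argue that $\sigma_{2d+1}$ behaves well; instead it works with the natural splitting
\[
|\hf\setminus\hk_x(n,2d{+}1)|=|\hf^{(d+1)}(\bar x)|+|\hf^{(\ge d+2)}|,
\]
bounds the second term by $\binom{n-3}{d-1}$ via a separate random-walk proposition (Proposition~\ref{lem-3.1}, valid for $n\ge 9d$) that controls $\sum_{i\ge 1}|\hf^{(d+1+i)}|$ directly from the $(2i{+}1)$-intersecting and cross-intersecting structure, and bounds the first term by applying Theorem~\ref{thm-diveristy} \emph{to the intersecting family $\hf^{(d+1)}$ itself} to locate an $x$ with $|\hf^{(d+1)}(\bar x)|\le\binom{n-3}{d-1}$. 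That direct application of the diversity bound is exactly where $n>36(d{+}1)$ enters. No reduction to the even Theorem~\ref{thm-main} is made; the high levels require their own argument because the link family $\hf(\bar 1)$ is not $2d$-union.
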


Define
\[
\hg=\hg(n,2d+1) =\left\{G\subset [n]\colon |G\cap [4,n]|\leq d-1\right\}.
\]
Then $\hg^{(d+2)}$ is the star of $[3]$ and $\hg^{(d+1)}$ is the triangle family $\hht(n,d+1)$. Hence for $x\in [3]$,
\[
|\hg\setminus \hk_x(n,2d+1)|=2\binom{n-3}{d-1}.
\]
Thus \eqref{ineq-main2} is best possible.

Our next result is a sharpening of the Katona Theorem in the case $u=2r$.

\begin{thm}\label{prop-1.9}
Let $n\geq 3.5r+1$ and let $\hf\subset 2^{[n]}$ be $2r$-union. Then
\begin{align}\label{ineq-prop1.9}
|\hf^{(\geq r)}|\leq \binom{n}{r}
\end{align}
with equality if and only if $\hf^{(\geq r)}=\binom{[n]}{r}$.
\end{thm}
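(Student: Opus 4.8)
The plan is to use the shifting (compression) technique together with the random walk method, the two tools advertised in the abstract. First I would observe that it suffices to bound $|\hf^{(\geq r)}|$, so I may as well delete all sets of size less than $r$ from $\hf$ and assume every member has size at least $r$; the $2r$-union condition is preserved. Next I would apply the standard shifts $S_{ij}$ for $1\le i<j\le n$. One must check that shifting preserves the $2r$-union property: if $F,F'$ are both shifted and $|S_{ij}(F)\cup S_{ij}(F')|>2r$, then tracing through the cases (as in the proof of the Katona Theorem) one finds that already $|F\cup F'|>2r$ or $|G\cup G'|>2r$ for the pre-images, a contradiction. Shifting also cannot decrease $|\hf^{(\geq r)}|$ since it preserves every cardinality. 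So I may assume $\hf$ is shifted (compressed), i.e. stable under every $S_{ij}$.

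The heart of the argument is the random walk / cycle method. For a shifted $2r$-union family, I want to show that for each $\ell$ with $r\le \ell\le 2r$ the "excess" of sets of size $\ell$ over the benchmark $\binom{[n]}{r}$ is controlled. The cleanest route: encode a set $F$ of size $\ell\ge r$ by a $\pm 1$ walk and use the fact that for a shifted family the characteristic vectors are "downward closed in the shifting order", so membership of $F$ forces membership of many sets of size $r$; conversely, I would set up an injection-with-multiplicity (or a weighting / Kruskal--Katona-type shadow argument) from $\hf^{(\geq r)}$ into $\binom{[n]}{r}$. Concretely, for $\ell>r$ I would show the shadow $\partial^{\ell-r}\hf^{(\ell)}$ (the family of $r$-subsets contained in some $F\in\hf^{(\ell)}$) is large relative to $\hf^{(\ell)}$ and, crucially, that these shadows together with $\hf^{(r)}$ still fit inside $\binom{[n]}{r}$ because the union condition, after shifting, forces every member of $\hf^{(\geq r+1)}$ to lie in a small "initial segment" — here is where $n\ge 3.5r+1$ enters, guaranteeing the walk started from such a set returns to the benchmark height with positive probability bounded away from the degenerate case. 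Summing the local estimates and using $\sum_{\ell\ge r}|\hf^{(\ell)}|=|\hf^{(\geq r)}|$ yields $|\hf^{(\geq r)}|\le\binom{n}{r}$.

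For the equality case: if $|\hf^{(\geq r)}|=\binom{[n]}{r}$ then all the intermediate inequalities are tight, which forces $\hf^{(\ell)}=\emptyset$ for every $\ell>r$ (otherwise the shadow argument is strict, using $n\ge 3.5r+1$) and $\hf^{(r)}=\binom{[n]}{r}$; then one checks $\binom{[n]}{r}$ is indeed $2r$-union, and finally one argues the shifting step was reversible at equality — a shifted family of maximum size of this shape can only have come from $\binom{[n]}{r}$ itself — to conclude $\hf^{(\geq r)}=\binom{[n]}{r}$ already before shifting.

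The main obstacle I anticipate is making the shadow/random-walk estimate genuinely local and tight enough to get the clean bound $\binom{n}{r}$ (rather than something off by lower-order terms): one needs that for a shifted $2r$-union family, $|\hf^{(\ell)}|$ for $r<\ell\le 2r$ is not merely bounded but bounded so that its "charge" into $\binom{[n]}{r}$ does not overlap with $\hf^{(r)}$ and does not overflow $\binom{[n]}{r}$ — and pinning down exactly the threshold $n\ge 3.5r+1$ where this ceases to hold (consistent with the family $\hd$ discussed above for the even overflow problem showing $3.5r$ is essentially sharp) will require a careful, possibly case-split, analysis of the walk's return probability.
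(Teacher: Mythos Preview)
Your plan has a genuine gap at the ``shadow'' step. Once you reduce to a shifted complex (the paper does this too), every $r$-subset of any $F\in\hf^{(\ell)}$ with $\ell>r$ already lies in $\hf^{(r)}$, i.e.\ $\partial^{\ell-r}\hf^{(\ell)}\subset\hf^{(r)}$. Hence the shadows create no \emph{new} room in $\binom{[n]}{r}$; they are absorbed by $\hf^{(r)}$, not added to it, and an injection-with-multiplicity of the kind you sketch cannot separate $\hf^{(r)}$ from the higher layers. The phrase ``these shadows together with $\hf^{(r)}$ still fit inside $\binom{[n]}{r}$'' is therefore not a constraint at all.

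The actual leverage in the paper is different and more delicate. By Fact~\ref{ineq-1.8} the pair $\hf^{(r)},\hf^{(r+1)}$ is cross-intersecting, and Hilton's Lemma (the Kruskal--Katona theorem in cross-intersecting form, Lemma~\ref{lem-5.1}) plus \eqref{ineq-sperner2} give a quantitative trade-off: when $|\hf^{(r)}|>\binom{n-1}{r-1}+\binom{n-2}{r-1}$ one has $|\hf^{(r)}|+2.5\,|\hf^{(r+1)}|\le\binom{n}{r}$ (Claim~\ref{claim-5.1}). The layers $\hf^{(r+i)}$ with $i\ge 2$ are controlled by the walk bounds \eqref{ineq-walks1}, \eqref{ineq-walks2}, \eqref{ineq-universalbound2}, and the proof then splits into cases according to whether $(2,4,\ldots,2r)\in\hf^{(r)}$ and into three subcases according to the size of $|\hf^{(r)}|$; in the hardest subcase one chooses the maximal $j$ with $(j,j+2,\ldots,j+2r-2)\in\hf^{(r)}$ and plays \eqref{ineq-walks2} for $\hf^{(r)}$ against \eqref{ineq-walks1} for the higher layers. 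The constant $3.5$ emerges from these explicit binomial inequalities (via \eqref{ineq-key}), not from a return-probability heuristic. Without the cross-intersecting/Hilton step and the case split on $j$, your outline does not close.
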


To see that Theorem \ref{prop-1.9} implies the Katona Theorem just note that $\hf^{(<r)}\leq \sum_{0\leq i<r}\binom{n}{i}$ is evident for any $\hf\subset 2^{[n]}$.

The coefficient 3.5 in the above theorem could most likely be replaced by a smaller number. However the example and computation below show that the statement is no longer true for $n<2.2r$.

Recall
the $2r$-union family:
\[
\hd=\{D\subset [n]\colon |
D\setminus [6]|\leq r-3\}.
\]
It is easy to see that $\hd^{(\geq r+4)}=\emptyset$ and
\begin{align*}
&|\hd^{(r)}| =20\binom{n-6}{r-3}+15\binom{n-6}{r-4}+6\binom{n-6}{r-5}+\binom{n-6}{r-6},\\[5pt]
&|\hd^{(r+1)}|=15\binom{n-6}{r-3}+6\binom{n-6}{r-4}+\binom{n-6}{r-5},\\[5pt] &|\hd^{(r+2)}|=6\binom{n-6}{r-3}+\binom{n-6}{r-4},\\[5pt]
&|\hd^{(r+3)}|=\binom{n-6}{r-3}.
\end{align*}
Thus,
\[
|\hd^{(\geq r)}|=42\binom{n-6}{r-3}+22\binom{n-6}{r-4}+7\binom{n-6}{r-5}+\binom{n-6}{r-6}.
\]
Since
\[
\binom{n}{r}=\binom{n-6}{r}+6\binom{n-6}{r-1}+ 15\binom{n-6}{r-2}+20\binom{n-6}{r-3}+15\binom{n-6}{r-4}
+6\binom{n-6}{r-5}+\binom{n-6}{r-6},
\]
it follows that
\begin{align}\label{ineq-1.1}
|\hd^{(\geq r)}|-\binom{n}{r}&=\binom{n-6}{r}+6\binom{n-6}{r-1}+ 15\binom{n-6}{r-2}+20\binom{n-6}{r-3}\nonumber\\[5pt]
&\qquad -22\binom{n-6}{r-3}-7\binom{n-6}{r-4}-\binom{n-6}{r-5}.
\end{align}
Note that for any  $0\leq i\leq 5$ and $n=(1+c)r$,
\[
\frac{\binom{n-6}{r-i}}{\binom{n-6}{r-i-1}}= \frac{n-r+i-5}{r-i}=\frac{cr+i-5}{r-i}\rightarrow c
\]
as $r$ tends to infinity.  Thus \eqref{ineq-1.1}$>0$ is essentially equivalent to
\[
1+7c+22c^2-15c^3-6c^4-c^5>0.
\]
 This holds for $1<c<1.2$.
Thus to establish \eqref{ineq-prop1.9} one needs at least $n\geq 2.2r$.

 For the  odd case we have the following result.

\begin{thm}\label{thm-1.10}
Let $n> 4r$ and let $\hf\subset 2^{[n]}$ be $(2r+1)$-union. Then
\begin{align}\label{ineq-prop1.10}
|\hf^{(\geq r+1)}|\leq \binom{n-1}{r}
\end{align}
with equality if and only if $\hf^{(\geq r+1)}=\hs(n,r+1,1)$ up to isomorphism.
\end{thm}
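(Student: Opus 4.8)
The plan is to split according to whether the family $\hf^{(\ge r+1)}$ has a common element: if it does, the bound reduces cleanly to Theorem~\ref{prop-1.9}, and if it does not, the bound becomes \emph{strict}, so no extremal family can arise from that branch. First I would pass to a shifted family. The operators $S_{ij}$ with $1\le i<j\le n$ preserve the $(2r+1)$-union property and leave every $|\hf^{(\ell)}|$ unchanged, so we may assume $\hf=S_{ij}(\hf)$ for all $i<j$; since $|\hf^{(\ge r+1)}|$ is unaffected, it suffices to prove the inequality and the equality statement (the latter then lifted to "up to isomorphism" by the usual reverse-shifting argument) for shifted $\hf$. Note that $\hf^{(\ell)}=\emptyset$ for $\ell>2r+1$, that $\hf^{(\ge r+1)}$ and each $\hf^{(\ell)}$ remain shifted, and that for $F,F'\in\hf^{(\ge r+1)}$ of sizes $\ell_1,\ell_2$ the union bound gives $|F\cap F'|\ge \ell_1+\ell_2-(2r+1)\ge 1$, so $\hf^{(\ge r+1)}$ is intersecting.

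Suppose first that every member of $\hf^{(\ge r+1)}$ contains $1$. Set $\hf'=\{F\setminus\{1\}\colon F\in\hf^{(\ge r+1)}\}\subseteq 2^{[2,n]}$; the map $F\mapsto F\setminus\{1\}$ is a bijection, every member of $\hf'$ has size at least $r$, and for $A,B\in\hf'$ we have $|A\cup B|=|(F\cup F')\setminus\{1\}|\le 2r$ since $1\in F\cup F'$, so $\hf'$ is a $2r$-union family on the $(n-1)$-element set $[2,n]$. As $n>4r$ gives $n-1\ge 3.5r+1$ (the few small values of $r$ for which this fails being checked directly), Theorem~\ref{prop-1.9} yields
\[
|\hf^{(\ge r+1)}|=|(\hf')^{(\ge r)}|\le\binom{n-1}{r},
\]
with equality only if $(\hf')^{(\ge r)}=\binom{[2,n]}{r}$, i.e. $\hf'=\binom{[2,n]}{r}$, which forces $\hf^{(\ge r+1)}=\hs(n,r+1,1)$.

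Now suppose some member of $\hf^{(\ge r+1)}$ omits $1$. Since a shifted family whose members have a common element must have $1$ among them, $\hf^{(\ge r+1)}$ is non-trivial; I claim $|\hf^{(\ge r+1)}|<\binom{n-1}{r}$, so this branch contributes no extremal configuration. If $\hf^{(\ge r+2)}=\emptyset$ then $\hf^{(\ge r+1)}=\hf^{(r+1)}$ is a non-trivial intersecting $(r+1)$-uniform family and, since $n>4r\ge 2(r+1)$, the Hilton--Milner bound \eqref{ineq-nontrival} gives $|\hf^{(r+1)}|\le\binom{n-1}{r}-\binom{n-r-1}{r}+1<\binom{n-1}{r}$. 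Otherwise pick $G\in\hf$ with $g:=|G|\ge r+2$ maximal; then every $F\in\hf^{(\ell)}$ with $r+1\le\ell\le g$ has $|F\setminus G|\le 2r+1-g$, and in particular $\hf^{(r+1)}\subseteq\{F\colon|F\cap G|\ge g-r\}$. The naive estimate that every member of $\hf^{(\ge r+1)}$ lies within $2r+1-g$ of $G$ overcounts by an exponential-in-$r$ factor and is too weak when $n$ is only linear in $r$; instead I would invoke the random walk method on the shifted family $\hf^{(\ge r+1)}$ — encoding each member by its $\pm1$ lattice path along $1,2,\dots,n$, using shiftedness together with the intersection and union constraints to force every path to a prescribed small height by coordinate $2r+1$, and summing the resulting ballot-type estimates over $r+1\le\ell\le g$ — to obtain a bound on $|\hf^{(\ge r+1)}|$ strictly below $\binom{n-1}{r}$ for every $r+2\le g\le 2r+1$ once $n>4r$.

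The step I expect to be hardest is exactly this last one: making the random walk estimate sharp enough that, combined with the non-triviality of $\hf^{(\ge r+1)}$ and the restriction imposed by $G$, it remains below $\binom{n-1}{r}$ uniformly in $g$. The tightest case is $g$ close to $r+2$, where the $(r+1)$-slice (bounded by Hilton--Milner) and the higher slices (bounded by the random walk) are simultaneously largest, and where the precise value $4$ of the slope in the hypothesis $n>4r$ is consumed; the easier cases $g$ near $2r+1$ force $\hf^{(\ge r+1)}$ to be tiny. Collecting the two branches yields \eqref{ineq-prop1.10}, and equality forces the first branch together with equality in Theorem~\ref{prop-1.9} applied to $\hf'$, hence $\hf^{(\ge r+1)}=\hs(n,r+1,1)$ up to isomorphism.
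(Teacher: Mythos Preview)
Your Branch~1 reduction --- when every member of $\hf^{(\ge r+1)}$ contains $1$, delete $1$ and apply Theorem~\ref{prop-1.9} on $[2,n]$ --- is correct, clean, and genuinely different from the paper's route; it also yields the equality characterisation for free. (The caveat that $n-1\ge 3.5r+1$ can fail for $r=1$ is real but trivially patched.) The paper does not invoke Theorem~\ref{prop-1.9} here at all.

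The genuine gap is Branch~2. You correctly diagnose that combining Hilton--Milner on $\hf^{(r+1)}$ with the universal bound \eqref{ineq-universalbound} on the higher slices is too weak: the Hilton--Milner saving $\binom{n-r-2}{r}$ is exponentially smaller in $r$ than $\sum_{i\ge 2}\binom{n}{r-i+1}\asymp\binom{n}{r-1}$ when $n$ is only a fixed multiple of $r$. But your proposed fix --- take a set $G\in\hf^{(\ge r+2)}$ of maximal size $g$ and run random walks constrained by $G$ --- is only sketched, and it is not clear how the single parameter $g$ simultaneously tightens the bound on $\hf^{(r+1)}$ and on the higher slices, particularly at the tight end $g=r+2$ where you yourself flag the difficulty. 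The paper does \emph{not} branch on triviality. Instead it asks whether $(1,3,5,\ldots,2r+1)\in\hf^{(r+1)}$; if not, the complex property forces each $\hf^{(r+i)}$ to miss a specific set, and \eqref{ineq-walks1} gives $\sum_{i\ge 1}|\hf^{(r+i)}|<\tfrac32\binom{n}{r-1}<\binom{n-1}{r}$. If so, it chooses the maximal $j$ with $(1,j,j+2,\ldots,j+2r-2)\in\hf^{(r+1)}$, and this one parameter simultaneously controls $|\hf^{(r+1)}(1)|$ via \eqref{ineq-walks2}, $|\hf^{(r+1)}(\bar 1)|$ via \eqref{ineq-walks1}, and each $|\hf^{(r+i)}|$ via cross-$i$-intersection with that very set and \eqref{ineq-walks1}; the resulting inequality in $j$ is then verified with \eqref{ineq-key} and $n>4r$. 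The pivot is thus a set of prescribed \emph{shape} in the $(r+1)$-layer, not a set of maximal \emph{size}, and that choice is what makes the three bounds mesh --- this is the missing idea in your Branch~2.
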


 Computation similar to the even case, using the example $\{F \in [n]: |F\setminus [5]| \leq  r-2 \}$ shows that $4r$ cannot be
 replaced by $2.88r$.

Two families $\ha\subset \binom{[n]}{a}$ and  $\hb\subset \binom{[n]}{b}$ are  called {\it cross $t$-intersecting} if $|A\cap B|\geq t$ for any $A\in \ha$ and $B\in \hb$. We need the following simple fact.

\begin{fact}\label{ineq-1.8}
Suppose that $\hf\subset 2^{[n]}$ is $(2d+h)$-union then (i), (ii) hold.
\begin{itemize}
  \item[(i)] $\hf^{(d+i)}$ is $(2i-h)$-intersecting.
  \item[(ii)] $\hf^{(d+i)}$ and $\hf^{(d+j)}$ are cross $(i+j-h)$-intersecting.
\end{itemize}
\end{fact}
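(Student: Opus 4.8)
The plan is to derive both parts directly from the elementary inclusion--exclusion identity
\[
|A\cap B| = |A| + |B| - |A\cup B|,
\]
valid for arbitrary finite sets $A,B$, combined with the defining inequality of a $(2d+h)$-union family.

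I would first prove (ii), since (i) is the special case $j=i$. Let $F\in \hf^{(d+i)}$ and $F'\in \hf^{(d+j)}$ be arbitrary. By definition $|F|=d+i$ and $|F'|=d+j$, while the $(2d+h)$-union property of $\hf$ gives $|F\cup F'|\le 2d+h$. Substituting into the identity above,
\[
|F\cap F'| = |F| + |F'| - |F\cup F'| \ge (d+i)+(d+j)-(2d+h) = i+j-h .
\]
As $F,F'$ were arbitrary, $\hf^{(d+i)}$ and $\hf^{(d+j)}$ are cross $(i+j-h)$-intersecting, which is (ii). Taking $j=i$, and observing that a family being cross $(2i-h)$-intersecting with itself is precisely the statement that it is $(2i-h)$-intersecting, we obtain (i).

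There is no real obstacle: the argument is a one-line computation. The only points to keep in mind are that $\hf^{(d+i)}$ (or $\hf^{(d+j)}$) may be empty, in which case the cross-intersecting (resp.\ intersecting) condition holds vacuously, and that the conclusion imposes a genuine restriction only when the relevant index $2i-h$ or $i+j-h$ is positive; for non-positive values it is automatically true. In the applications the fact will be used with $h\in\{0,1\}$ and small positive $i,j$.
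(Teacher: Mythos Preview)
Your proof is correct and essentially identical to the paper's own argument: both note that (i) is the special case $j=i$ of (ii) and then derive (ii) from $|F\cap F'|=|F|+|F'|-|F\cup F'|\ge (d+i)+(d+j)-(2d+h)$.
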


\begin{proof}
Since (i) is the special case $i=j$ of $(ii)$, we only prove (ii). Choose $F\in \hf^{(d+i)}$, $G\in \hf^{(d+j)}$. Then
\[
|F\cap G|=|F|+|G|-|F\cap G| \geq (d+i)+(d+j)-(2d+h)=i+j-h.
\]
\end{proof}

Let us recall the following common notations: for $P\subset Q\subset [n]$, let
\[
\hf(P,Q) = \left\{F\setminus Q\colon F\in\hf,\ F\cap Q=P \right\}.
\]
When $P=Q$, we simply write $\hf(P)$.
\section{Shifting and random walks}

For a family $\hf\subset 2^{[n]}$ and two integers $1\leq i<j\leq k$, one define the $i\leftarrow j$ shift $S_{ij}$ by
\[
S_{ij}(\hf) =\{S_{ij}(F)\colon F\in \hf\},
\]
where
$$S_{ij}(F)=\left\{
                \begin{array}{ll}
                  F':=(F\setminus\{j\})\cup\{i\}, & j\in F, i\notin F \text{ and } F' \notin \hf; \\[5pt]
                  F, & \hbox{otherwise.}
                \end{array}
              \right.
$$
Obviously, $|S_{ij}(\hf)|=|\hf|$, $|S_{ij}(F)|=|F|$ and it is known (cf. e.g. \cite{F87}) that the $i\leftarrow j$ shift maintains the $t$-intersecting, $u$-union and cross $t$-intersecting properties.

Repeated application of the $i\leftarrow j$ shift leads to a family $\tilde{\hf}$ satisfying $S_{ij}(\tilde{\hf})=\tilde{\hf}$ for all $1\leq i<j\leq n$. Such families are called {\it initial}. This initiality is with respect to the following partial order.

Let $(x_1,\ldots,x_k)$ denote the $k$-set $\{x_1,\ldots,x_k\}$ where we know that $x_1<\ldots<x_k$. If $(x_1,\ldots,x_k)$, $(y_1,\ldots,y_k)\in \binom{[n]}{k}$ then we say that $(x_1,\ldots,x_k)$ precedes $(y_1,\ldots,y_k)$ and denote it by $(x_1,\ldots,x_k)\prec(y_1,\ldots,y_k)$ if and only if $x_i\leq y_i$ for all $1\leq i\leq k$. Then $\hf$ is initial if and only if for all $F,G\subset [n]$ with $|F|=|G|$, $G\in \hf$ and $F\prec G$ imply $F\in \hf$.

\begin{prop}[\cite{F78}]
Suppose that $F\in \hf\subset 2^{[n]}$ and $\hf$ is initial and $t$-intersecting. Then
\begin{align}\label{ineq-2.1}
|F\cap [t+2i]|\geq t+i \mbox{ for some } i\geq 0.
\end{align}
\end{prop}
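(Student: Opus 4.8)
The plan is to exploit initiality together with the $t$-intersecting property via a direct minimal-counterexample / greedy argument on the elements of $F$, which is essentially the standard proof of this fact from \cite{F78}.

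First I would reformulate the goal. Writing $F=(x_1,\ldots,x_k)$ with $x_1<\cdots<x_k$, I want to find $i\geq 0$ with $|F\cap[t+2i]|\geq t+i$. Equivalently, letting $a_j=|F\cap[j]|$ be the counting function, I seek some even offset: $a_{t+2i}\geq t+i$ for some $i\geq 0$. Note $a_0=0$ and $a_n=k\geq t$ (since a $t$-intersecting family has members of size $\geq t$, and if $k<t$ the statement about ``some $i\ge0$'' would be vacuous/false — so implicitly $k\geq t$; indeed $|F\cap F|\geq t$ forces $k\geq t$). The function $a_j$ increases by $0$ or $1$ as $j$ increments, so it is a lattice path from $(0,0)$; I must show it touches or crosses the line of points $(t+2i,t+i)$, i.e. the line $y=\tfrac{x+t}{2}$ at even abscissa-offsets from $t$.

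The key step is the contrapositive: suppose $|F\cap[t+2i]|\leq t+i-1$ for every $i\geq 0$. Taking $i=0$ gives $|F\cap[t]|\leq t-1$, so there is a ``hole'' $h\in[t]\setminus F$. Now I would use initiality to shift $F$ down: the hypothesis $|F\cap[t+2i]|\le t+i-1$ for all $i$ should let me build, by repeatedly replacing the largest element of $F$ lying above some threshold with a smaller available element, a set $F'\prec F$ (hence $F'\in\hf$ by initiality) which violates $t$-intersection with $F$ or with some other member. More precisely, the standard argument: pick the witnessing holes and push $F$ to a canonical small set $G$ with $G\prec F$; then $|G\cap F|$ can be computed and shown to be $<t$, contradicting that $\hf$ is $t$-intersecting (since both $G,F\in\hf$). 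The point where the inequalities $|F\cap[t+2i]|\le t+i-1$ get used is exactly in the estimate $|G\cap F|\le t-1$: the deficiency of $F$ on each initial segment $[t+2i]$ translates into enough room to choose $G$ disjoint enough from $F$.

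The main obstacle I anticipate is making the ``canonical small set $G$'' construction precise so that simultaneously (a) $G\prec F$, guaranteeing $G\in\hf$, and (b) $|G\cap F|<t$. One clean way: define $G$ greedily from below — put into $G$ the $|F\cap[t+2i]|$ smallest elements consistent with the path of $F$ but compressed leftward as far as the failure condition permits, choosing $G$'s elements to alternate landing in the holes of $F$. A slicker alternative, which I'd try first, is induction on $k$ or on $x_k$: if the last element $x_k$ is large, replace $F$ by $F\setminus\{x_k\}\cup\{m\}$ where $m$ is the smallest non-element of $F$; this new set is $\prec F$, hence in $\hf$, still violates all the segment inequalities (one checks the bookkeeping), and has strictly smaller maximum, so by induction it satisfies \eqref{ineq-2.1} — but then unwinding shows $F$ did too, or else we directly reach a pair in $\hf$ with intersection $<t$. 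Balancing the induction parameter against the shifting so the segment-inequalities are preserved is the delicate bit; everything else is routine counting.
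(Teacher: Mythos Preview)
The paper does not give its own proof of this proposition; it is quoted from \cite{F78}, and the paper's only comment is that \eqref{ineq-2.1} can be deduced from the cross-intersecting statement \eqref{ineq-2.2} (apply it with $\hg=\hf$ and $G=F$: one obtains $2|F\cap[s]|\ge s+t$ for some $s\ge t$, and setting $i=\lfloor (s-t)/2\rfloor$ gives $|F\cap[t+2i]|\ge t+i$). So there is no in-paper argument to compare against beyond that one-line reduction.

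Your direct contrapositive approach is exactly the classical argument of \cite{F78} and is correct. The construction you flagged as ``the delicate bit'' can be made fully explicit. Write $F=(x_1,\ldots,x_k)$ and let $a_1<a_2<\cdots$ enumerate $[n]\setminus F$. The assumption $|F\cap[t+2i]|\le t+i-1$ for every $i\ge 0$ is equivalent to $x_j\ge 2j-t+1$ for all $j\ge t$ and to $a_j\le t+2j-2$ for all $j\ge 1$. Set
\[
G=\{x_1,\ldots,x_{t-1}\}\cup\{a_1,\ldots,a_{k-t+1}\}.
\]
Then $|G|=k$ and $|F\cap G|=t-1$. To see $G\prec F$, note that for $j\le t-1$ the elements $x_1,\ldots,x_j$ already lie in $G\cap[x_j]$, while for $j\ge t$ one has $a_{j-t+1}\le 2j-t<x_j$, so $x_1,\ldots,x_{t-1},a_1,\ldots,a_{j-t+1}$ are $j$ elements of $G$ in $[x_j]$. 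Hence $G\prec F$, initiality gives $G\in\hf$, and $|F\cap G|<t$ contradicts the $t$-intersecting hypothesis.

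Your alternative induction on $x_k$ (replace $x_k$ by the smallest hole $m$) does \emph{not} preserve the failure of \eqref{ineq-2.1} in general: if $m\le t$ then $|F'\cap[t]|$ can become $t$, so the inductive hypothesis no longer applies. Drop that route and use the explicit $G$ above.
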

\begin{prop}[\cite{F87}]
Suppose that $\hf,\hg\subset 2^{[n]}$ are cross $t$-intersecting, $F\in \hf$, $G\in \hg$. Then there exists some $s\geq t$ so that
\begin{align}\label{ineq-2.2}
|F\cap [s]|+|G\cap [s]| \geq s+t.
\end{align}
\end{prop}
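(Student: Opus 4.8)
\textbf{Proof proposal for Proposition \ref{ineq-2.2}.}

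The plan is to reduce the cross-$t$-intersecting prefix statement to the already-established single-family statement \eqref{ineq-2.1} by a symmetrization trick. Given cross $t$-intersecting $\hf,\hg\subset 2^{[n]}$, first replace them by initial families: apply the $i\leftarrow j$ shift $S_{ij}$ simultaneously to both $\hf$ and $\hg$ for all $1\le i<j\le n$ until both are initial. Since simultaneous shifting preserves the cross $t$-intersecting property (cf.\ \cite{F87}), we may assume $\hf$ and $\hg$ are both initial. It suffices to prove \eqref{ineq-2.2} for initial cross $t$-intersecting families, and in fact it is enough to do so for a single pair $F\in\hf$, $G\in\hg$; note that if $F'\prec F$ then $|F'\cap[s]|\ge|F\cap[s]|$ for every $s$ (a prefix of a $\prec$-smaller set is at least as large), so it even suffices to verify \eqref{ineq-2.2} when $F$ and $G$ are the $\prec$-largest sets of their respective cardinalities in $\hf$, $\hg$; but we will not need that reduction, only the initiality of the two families.

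The key step is to merge the two sets into one. Write $a=|F|$, $b=|G|$, and consider on the ground set $[n]$ the single set $H$ obtained by ``interleaving'' $F$ and $G$: more precisely, define $h_m=|F\cap[m]|+|G\cap[m]|$ for $0\le m\le n$, a nondecreasing step function that increases by $0$, $1$, or $2$ at each step and ends at $h_n=a+b$. The cross $t$-intersecting condition, via the standard argument, should give that for the ``doubled'' configuration one can find an initial $(2t)$-intersecting family containing a set whose prefix statistics dominate $h_m$; then \eqref{ineq-2.1} applied with parameter $2t$ produces an index $i\ge0$ with a prefix of $[2t+2i]$ of size at least $2t+i$ in the doubled configuration, i.e.\ $h_{2t+2i}\ge 2t+i$, and setting $s=2t+2i$ this rearranges to $|F\cap[s]|+|G\cap[s]|\ge s+t$, but we must check $s\le n$. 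Concretely I would build the auxiliary family on $[2n]$ (two disjoint copies), form $\hf\ast\hg=\{F_1\cup G_2: F\in\hf,\ G\in\hg\}$ where $F_1$ lives on the odd coordinates and $G_2$ on the even ones interleaved, verify it is initial and $2t$-intersecting using that $\hf,\hg$ are cross $t$-intersecting, apply \eqref{ineq-2.1}, and translate the conclusion back through the identification of prefixes of $[2m]$ in the doubled ground set with the pair of prefixes of $[m]$ in the original.

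The main obstacle I expect is making the ``doubling'' translation rigorous: one has to choose the interleaving of the two ground sets so that (a) $\prec$-initiality of $\hf$ and $\hg$ lifts to $\prec$-initiality of the merged family on the doubled ground set, and (b) the $2t$-intersection property of the merged family is exactly equivalent to cross $t$-intersection of the original pair. A cleaner alternative, avoiding the doubling entirely, is to argue directly by a ``walk'' / exchange argument: suppose for contradiction that $|F\cap[s]|+|G\cap[s]|<s+t$, i.e.\ $|F\cap[s]|+|G\cap[s]|\le s+t-1$, for \emph{every} $s\ge t$; taking $s=t$ gives $|F\cap[t]|+|G\cap[t]|\le 2t-1$, so in particular (using initiality to pull $F,G$ down) one can arrange $F,G$ with $F\cap[t]$ and $G\cap[t]$ together missing at least one point of $[t]$, say point $j$; then shifting a point of $F\setminus[t]$ (or $G\setminus[t]$) down into slot $j$ stays inside the family by initiality but strictly decreases $|F\cap G|$, and iterating drives $|F\cap G|$ below $t$, contradicting cross $t$-intersection. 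Turning this sketch into a valid induction on $\sum_{s}(s+t-1-|F\cap[s]|-|G\cap[s]|)$ or on $|F\triangle[\,|F|\,]|+|G\triangle[\,|G|\,]|$ is the technical heart; I would model it on the proof of \eqref{ineq-2.1} in \cite{F78}, which is the $\hf=\hg$ case of exactly this statement.
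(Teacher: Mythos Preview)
The paper does not prove this proposition; it is quoted from \cite{F87} without argument (and, as you implicitly recognise, the hypothesis that $\hf,\hg$ are initial is tacitly assumed, as in the preceding proposition).

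Your doubling approach has a genuine gap, precisely at the point you yourself flag as the main obstacle. With $H=F_{\mathrm{odd}}\cup G_{\mathrm{even}}$ and $H'=F'_{\mathrm{odd}}\cup G'_{\mathrm{even}}$ one has $|H\cap H'|=|F\cap F'|+|G\cap G'|$, and cross $t$-intersection gives no control whatsoever over these \emph{within}-family intersections; hence $\hf\ast\hg$ is simply not $2t$-intersecting, and \eqref{ineq-2.1} cannot be invoked. Passing instead to the swapped set $G_{\mathrm{odd}}\cup F_{\mathrm{even}}$ does yield a cross $2t$-intersecting \emph{pair} of families on $[2n]$, but then one is appealing to \eqref{ineq-2.2} itself rather than to \eqref{ineq-2.1}, which is circular. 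Your fallback exchange sketch also misfires: moving an element of $F\setminus[t]$ into a vacant slot $j\in[t]$ need not decrease $|F\cap G|$ (it does so only when the moved element lies in $G$ and $j\notin G$, which you have not arranged). The standard argument is different. The negated conclusion is equivalent to $\#B(s)-\#N(s)\le t-1$ for every $s$, where $B(s)$ and $N(s)$ count the positions $\le s$ lying in $F\cap G$ and in $[n]\setminus(F\cup G)$, respectively. Replace in $G$ the $k$-th element $b_k$ of $F\cap G$, for each $k\ge t$, by the $(k{-}t{+}1)$-th element $n_{k-t+1}$ of $[n]\setminus(F\cup G)$. The prefix inequality guarantees $|G'\cap[s]|\ge|G\cap[s]|$ for all $s$, hence $G'\prec G$ and $G'\in\hg$ by initiality; but $F\cap G'=\{b_1,\dots,b_{t-1}\}$ has size $t-1<t$, contradicting cross $t$-intersection.
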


One can deduce \eqref{ineq-2.1} from \eqref{ineq-2.2}. On the other hand \eqref{ineq-2.1} has a geometric interpretation. Let us associate with every subset $R\subset [n]$ a walk $w(R)$ of $n$ steps in the plane starting from the origin. If after the $i$th step we are in point $(x,y)$ then we move to $(x,y+1)$ or $(x+1,y)$ according to whether $i+1\in R$ or $i+1\notin R$. If $|R|=k$ then the walk $w(R)$ terminates in $(n-k,k)$.

Choose $i$ in \eqref{ineq-2.1} to be maximal implies equality and thereby $w(F)$ is in the vertex $(i,t+i)$ after $t+2i$ steps. That is, it is on the line $y=x+t$.

\begin{cor}\label{cor-2.3}
If $\hf\subset 2^{[n]}$ is $t$-intersecting and initial then $w(F)$ hits the line $y=x+t$ for each $F\in \hf$.
\end{cor}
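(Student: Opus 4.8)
The statement is just the geometric reformulation of \eqref{ineq-2.1} spelled out in the paragraph preceding it, so the plan is to unwind the walk encoding carefully. Fix $F\in\hf$; since $\hf$ is $t$-intersecting, $|F|=|F\cap F|\ge t$. Record the walk $w(F)$ by the single quantity
\[
g(m)=2|F\cap[m]|-m,\qquad 0\le m\le n,
\]
which is exactly the difference $y-x$ of the coordinates of $w(F)$ after $m$ steps: an ``up'' step (i.e.\ $m+1\in F$) raises $g$ by $1$ and a ``right'' step lowers it by $1$. As $g(0)=0$, $|g(m+1)-g(m)|=1$ and $t\ge1$, a discrete intermediate value argument shows that $w(F)$ meets the line $y=x+t$ if and only if $g(m)\ge t$ for some $0\le m\le n$.

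It thus suffices to exhibit one index $m\le n$ with $g(m)\ge t$, and this is immediate from \eqref{ineq-2.1}. Pick $i\ge 0$ with $|F\cap[t+2i]|\ge t+i$. If $t+2i\le n$ then $g(t+2i)=2|F\cap[t+2i]|-(t+2i)\ge 2(t+i)-(t+2i)=t$; choosing such an $i$ maximal in fact forces equality here, which is why, as remarked in the text, $w(F)$ lands on the vertex $(i,t+i)$ after $t+2i$ steps. If instead $t+2i>n$, then $F\subset[n]\subset[t+2i]$ gives $|F|=|F\cap[t+2i]|\ge t+i$, so $g(n)=2|F|-n\ge 2(t+i)-n\ge t$ since $2i\ge n-t$. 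Either way the desired $m$ exists. (One could equally well feed \eqref{ineq-2.2} with $\hg=\hf$ and $G=F$ into the same device, getting $s\ge t$ with $2|F\cap[s]|\ge s+t$.)

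The only delicate point is the boundary case $t+2i>n$: there the set $[t+2i]$ appearing in \eqref{ineq-2.1} overshoots $[n]$ and ``after $t+2i$ steps'' is not literally meaningful, so the clean picture of landing on $(i,t+i)$ needs the small fix above. Packaging everything through $g(m)=2|F\cap[m]|-m$ and the intermediate value property of a $\pm1$ walk makes this a non-issue and keeps the whole argument to a few lines.
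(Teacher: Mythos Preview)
Your proof is correct and follows essentially the same route as the paper, which simply observes that choosing $i$ maximal in \eqref{ineq-2.1} forces equality and hence places $w(F)$ on the vertex $(i,t+i)$ of the line $y=x+t$. Your version is a bit more careful: encoding the walk through $g(m)=2|F\cap[m]|-m$ and invoking the $\pm1$ intermediate value property cleanly handles the boundary case $t+2i>n$, which the paper's one-line sketch tacitly ignores.
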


The reflection of $(0,0)$ on the line $y=x+t$ is $(-t,t)$. The {\it reflection principle} says that the number of walks from $(-t,t)$ to any fixed vertex $V$ under the line is the same as the number of walks from $(0,0)$ to $V$ hitting the line. This readily implies
\begin{prop}\label{prop-2.4}
Suppose that $\hf\subset 2^{[n]}$ is $t$-intersecting. Then
\begin{align}\label{ineq-universalbound}
|\hf^{(t+\ell)}|\leq \binom{n}{\ell} \mbox{ for all }\ell\geq 0.
\end{align}
\end{prop}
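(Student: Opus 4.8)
The plan is to carry through the reflection-principle argument sketched in the paragraph preceding the statement, but first reducing to the initial case so that Corollary~\ref{cor-2.3} applies. First I would observe that the shift operators $S_{ij}$ preserve both the $t$-intersecting property and the cardinality of each uniform layer $\hf^{(\ell)}$; since $|S_{ij}(F)|=|F|$ and $|S_{ij}(\hf)|=|\hf|$, in particular $|\hf^{(t+\ell)}|$ is unchanged by shifting. Hence, applying $S_{ij}$ repeatedly, it suffices to prove \eqref{ineq-universalbound} when $\hf$ is initial and $t$-intersecting. From now on assume $\hf$ is of this form.

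Next I would invoke Corollary~\ref{cor-2.3}: for every $F\in\hf$ the associated lattice walk $w(F)$ hits the line $y=x+t$. Restricting attention to $F\in\hf^{(t+\ell)}$, the walk $w(F)$ is a monotone (up/right) path of $n$ steps from $(0,0)$ to the terminal vertex $(n-t-\ell,\,t+\ell)$, and distinct sets in $\hf^{(t+\ell)}$ give distinct walks. Therefore $|\hf^{(t+\ell)}|$ is at most the total number of monotone $n$-step walks from $(0,0)$ to $(n-t-\ell,\,t+\ell)$ that touch the line $y=x+t$.

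Now I would apply the reflection principle. The line $y=x+t$ lies strictly above the starting point $(0,0)$ (as $t\geq 1$), and the reflection of $(0,0)$ across this line is $(-t,t)$. The reflection principle sets up a bijection between monotone walks from $(0,0)$ to $(n-t-\ell,\,t+\ell)$ that meet the line $y=x+t$ and \emph{all} monotone walks from $(-t,t)$ to $(n-t-\ell,\,t+\ell)$: map a walk hitting the line to the walk obtained by reflecting its initial segment up to the first contact with the line. A walk from $(-t,t)$ to $(n-t-\ell,\,t+\ell)$ consists of $(n-t-\ell)-(-t)=n-\ell$ right-steps and $(t+\ell)-t=\ell$ up-steps, so there are exactly $\binom{n}{\ell}$ of them. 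Combining, $|\hf^{(t+\ell)}|\leq \binom{n}{\ell}$, which is \eqref{ineq-universalbound}.

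The one point requiring a little care — and the step I expect to be the main obstacle to making the argument fully rigorous — is the precise form of the reflection bijection near the boundary of the walk: one must check that \emph{every} target walk from $(-t,t)$ to $(n-t-\ell,t+\ell)$ is hit (every such walk is forced to cross the line $y=x+t$ somewhere, since it starts below it at $(-t,t)$, where $t>-t+t=0$ fails... rather $t < -t + t$ is false, so $(-t,t)$ is \emph{on} the line's reflected side and the walk must reach the line before terminating above it), and that the map is genuinely injective. This is the classical Lindström–Gessel–Viennot/ballot-type computation and presents no real difficulty, but it is where the inequality could be off by boundary terms if stated carelessly; once it is pinned down, the bound $\binom{n}{\ell}$ follows immediately and requires no case analysis on the parity of $u$ or the size of $n$.
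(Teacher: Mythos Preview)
Your proof is correct and follows exactly the paper's approach: reduce to the initial case by shifting, invoke Corollary~\ref{cor-2.3} so that every walk $w(F)$ with $F\in\hf^{(t+\ell)}$ hits the line $y=x+t$, and then use the reflection principle to bound the number of such walks by the number of walks from $(-t,t)$ to $(n-t-\ell,t+\ell)$, namely $\binom{n}{\ell}$. Your worry in the last paragraph about surjectivity is unnecessary: for the upper bound you only need that reflecting the initial segment up to the first contact gives an \emph{injection} into the set of walks from $(-t,t)$ to $(n-t-\ell,t+\ell)$, and that alone already yields $|\hf^{(t+\ell)}|\le\binom{n}{\ell}$.
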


Similarly, we have the following more general statement.

\begin{prop}\label{prop-2.5}
Let $(a,b)$ and $(n-k,k)$ be two points below the line $y=x+t$ and assume $a+t\leq k$. Then the number of walks from $(a,b)$ to $(n-k,k)$ hitting the line $y=x+t$ is $\binom{n-a-b}{k-a-t}$.
\end{prop}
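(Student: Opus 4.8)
The plan is to repeat, with a general starting point, the reflection argument that yielded Proposition~\ref{prop-2.4}. To begin with, a walk from $(a,b)$ to $(n-k,k)$ consists of exactly $n-a-b$ unit steps, $k-b$ of them upward and $(n-k)-a$ rightward, so there are $\binom{n-a-b}{k-b}$ such walks with no further constraint. Writing the line $y=x+t$ as $y-x=t$, reflection in it amounts to ``subtract $t$ from the $y$-coordinate, swap the two coordinates, add $t$ back'', and hence sends $(a,b)$ to $(b-t,a+t)$, consistent with the $(0,0)\mapsto(-t,t)$ stated above. Crucially this reflection interchanges the two admissible step directions, so it carries monotone lattice walks to monotone lattice walks.

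The heart of the proof is the reflection bijection. Given a walk from $(a,b)$ to $(n-k,k)$ that meets the line $y=x+t$, let $P$ be the first lattice point of the walk lying on the line and reflect the initial portion of the walk from $(a,b)$ up to $P$; since $P$ is fixed by the reflection, this yields a walk from $(b-t,a+t)$ to $(n-k,k)$, and reversing the construction shows this is a bijection onto the set of \emph{all} walks from $(b-t,a+t)$ to $(n-k,k)$. The one point that requires a short argument --- and the only place the hypotheses really enter --- is that every walk from $(b-t,a+t)$ to $(n-k,k)$ necessarily meets the line, so that the image set carries no residual constraint. For this I would track the signed distance $d=y-x-t$: at $(b-t,a+t)$ it equals $a-b+t\ge 0$ because $(a,b)$ lies below the line, at $(n-k,k)$ it equals $2k-n-t<0$ because $(n-k,k)$ lies below the line, and each step changes $d$ by $\pm1$; hence $d$ vanishes somewhere along the walk.

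It then remains only to count the walks from $(b-t,a+t)$ to $(n-k,k)$: these have $n-a-b$ steps, of which $k-(a+t)=k-a-t$ are upward. The hypothesis $a+t\le k$ gives $k-a-t\ge 0$, while $b\le a+t\le k\le n-k+t$ (the last inequality again because $(n-k,k)$ is below the line) gives $k-a-t\le n-a-b$, so the count is the genuine binomial coefficient $\binom{n-a-b}{k-a-t}$, which is the desired value.

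I do not anticipate any real obstacle here: the argument is essentially the classical reflection principle, so the ``hard part'' is merely the careful bookkeeping of the step counts and the verification that reflection respects the walk structure. The only things to watch are the degenerate cases where $(a,b)$ or $(n-k,k)$ happens to lie \emph{on} the line --- there the statement is immediate, since then the start, respectively the endpoint, of every walk already lies on the line, and the same formula drops out.
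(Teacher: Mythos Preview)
Your argument is correct and is exactly the reflection-principle computation the paper has in mind: the paper does not spell out a proof of Proposition~\ref{prop-2.5} at all, merely stating it as the ``similar'' generalisation of the $(0,0)\mapsto(-t,t)$ reflection that gave Proposition~\ref{prop-2.4}, and your write-up fills in precisely those details (reflected start point $(b-t,a+t)$, step count $n-a-b$, upward count $k-a-t$).
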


It should be mentioned that the universal bound \eqref{ineq-universalbound} for $t$-intersecting families was improved by Frankl \cite{F2020-2}.

\begin{thm}[\cite{F2020-2}]
Suppose that $\hf\subset 2^{[n]}$ is $t$-intersecting. Then
\begin{align}\label{ineq-universalbound2}
|\hf^{(t+\ell)}|\leq \binom{n-1}{\ell} \mbox{ for }\ell\leq \frac{n-t-1}{2}.
\end{align}
\end{thm}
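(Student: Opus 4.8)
The plan is to prove the bound by induction on $n$, letting the intersection parameter move up and down; the case $t=1$ turns out to be exactly the Erd\H{o}s--Ko--Rado Theorem, while for $t\ge 2$ a single splitting step lowers $n$ by one.

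Since shifting preserves the $t$-intersecting property and all cardinalities $|\hf^{(\ell)}|$, I may assume that $\hf^{(t+\ell)}$ is \emph{initial}; write $\hg=\hf^{(t+\ell)}$ (initial, $t$-intersecting, uniform of rank $t+\ell$). If $\ell=0$ then $\hg$ is a $t$-intersecting family of $t$-sets, hence $|\hg|\le 1=\binom{n-1}{0}$. If $t=1$ then the hypothesis $\ell\le(n-2)/2$ reads $n\ge 2(\ell+1)$, so $\hg$ is an intersecting $(\ell+1)$-uniform family on $[n]$ with $n\ge 2(\ell+1)$, and $|\hg|\le\binom{n-1}{\ell}$ by the Erd\H{o}s--Ko--Rado Theorem (the boundary case $n=2(\ell+1)$ being the trivial ``one set from each complementary pair'' bound). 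These are the base cases.

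Now assume $t\ge 2$ and $\ell\ge 1$, and split $\hg=\hg_{1}\cup\hg_{\bar 1}$ according to whether a member contains the element $1$, setting $\hg(1)=\{G\setminus\{1\}\colon G\in\hg_{1}\}$. Regarded as families on the $(n-1)$-set $[2,n]$, $\hg(1)$ is an initial $(t-1)$-intersecting family of $\bigl((t-1)+\ell\bigr)$-sets, so $|\hg_{1}|=|\hg(1)|\le\binom{n-2}{\ell}$ by the inductive hypothesis (valid since $\ell\le\frac{(n-1)-(t-1)-1}{2}$). The crucial ingredient is the following claim: \emph{if $\hg$ is initial and $t$-intersecting then $\hg_{\bar 1}$ is $(t+1)$-intersecting.} Indeed, for $G,G'\in\hg_{\bar 1}$ choose any $c\in G\cap G'$ (possible since $|G\cap G'|\ge t\ge 1$) and put $J=(G\setminus\{c\})\cup\{1\}$; since $1\notin G$ and $c\ge 2$, we have $|J|=|G|$ and $J\prec G$, so $J\in\hg$, while $J\cap G'=(G\cap G')\setminus\{c\}$ because $1\notin G'$, whence $|J\cap G'|=|G\cap G'|-1$. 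If $|G\cap G'|$ were equal to $t$ this would contradict $\hg$ being $t$-intersecting, proving the claim. Consequently $\hg_{\bar 1}$, viewed as a $(t+1)$-intersecting family of $\bigl((t+1)+(\ell-1)\bigr)$-sets on the $(n-1)$-set $[2,n]$, satisfies $|\hg_{\bar 1}|\le\binom{n-2}{\ell-1}$ by induction -- the hypothesis $\ell\le\frac{n-t-1}{2}$ is precisely $\ell-1\le\frac{(n-1)-(t+1)-1}{2}$. Adding the two estimates yields $|\hg|=|\hg_{1}|+|\hg_{\bar 1}|\le\binom{n-2}{\ell}+\binom{n-2}{\ell-1}=\binom{n-1}{\ell}$, as required.

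The main obstacle is the claim that avoiding the element $1$ forces one extra unit of intersection; without it, estimating $\hg_1$ and $\hg_{\bar 1}$ separately only recovers the universal bound $\binom{n}{\ell}$ of Proposition \ref{prop-2.4}. The remaining care is purely bookkeeping -- checking that each step stays inside the range $\ell\le\frac{n-t-1}{2}$ and that the $t=1$ instances are discharged by Erd\H{o}s--Ko--Rado rather than fed back into the recursion. Alternatively one could try a direct random-walk count, bounding the members that contain resp.\ avoid $1$ by the number of walks from $(0,1)$ resp.\ $(1,0)$ to the common endpoint that hit $y=x+t$ via Proposition \ref{prop-2.5}; but turning the cross-$t$-intersecting interaction between $\hg(1)$ and $\hg_{\bar 1}$ into the exact saving $\binom{n-1}{\ell-1}$ appears to require the same structural input.
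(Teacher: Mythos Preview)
The paper does not supply its own proof of this statement: it is quoted as a result from \cite{F2020-2} and used as a black box, so there is no in-paper argument to compare against. Your inductive proof is correct. The key structural observation---that for an initial $t$-intersecting family the subfamily $\hg_{\bar 1}$ avoiding the element $1$ is in fact $(t+1)$-intersecting---is precisely what drives the improvement from the universal bound $\binom{n}{\ell}$ of Proposition~\ref{prop-2.4} down to $\binom{n-1}{\ell}$, and your verification of it (replace a common element $c$ by $1$ and invoke initiality) is clean. The bookkeeping on the range $\ell\le\frac{n-t-1}{2}$ is right in both branches, and the base cases ($\ell=0$ trivial; $t=1$ via Erd\H{o}s--Ko--Rado, including the degenerate $n=2(\ell+1)$ case) are handled properly. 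This is in fact the argument of the cited reference, so nothing is missing.
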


The following proposition was proved in \cite{F78} and we include its proof for completeness.
\begin{prop}[\cite{F78}]
Let $\hf\subset \binom{[n]}{k}$ be an initial family.  If
$(1,2,\ldots,p,p+2,p+4,\ldots,2k-p)\notin \hf$ for some $0\leq p\leq k-1$, then
\begin{align}\label{ineq-walks1}
|\hf| \leq \binom{n}{k-p-1}.
\end{align}
If
$(p,p+2,p+4,\ldots,p+2k-2)\notin \hf$ for some $2\leq p\leq k-1$, then
\begin{align}\label{ineq-walks2}
|\hf| \leq \binom{n}{k}-\binom{n-p+2}{k}+\binom{n-p+2}{k-1}.
\end{align}
\end{prop}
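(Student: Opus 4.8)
The plan is to reduce both bounds to the walk-counting formula of Proposition \ref{prop-2.5}. Write $G=(y_1,\ldots,y_k)$ with $y_1<\cdots<y_k$ for a typical member of $\binom{[n]}{k}$, and recall that $w(G)$ reaches height $i$ at the lattice point $(y_i-i,i)$; since $y-x$ starts at $0$ and changes by $\pm1$ at each step, $w(G)$ hits the line $y=x+t$ if and only if some point of $w(G)$ satisfies $y-x\ge t$.

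For the first inequality, put $A=(1,2,\ldots,p,p+2,p+4,\ldots,2k-p)$, whose $i$-th element is $i$ for $i\le p$ and $2i-p$ for $i>p$. Since $\hf$ is initial and $A\notin\hf$, no $G$ with $A\preceq G$ lies in $\hf$, so $\hf\subseteq\{G\colon A\not\preceq G\}$. The inequalities $y_i\ge i$ ($i\le p$) are automatic, hence $A\not\preceq G$ means $y_i\le 2i-p-1$ for some $p<i\le k$; at such an $i$ the point $(y_i-i,i)$ of $w(G)$ satisfies $y-x\ge p+1$, and conversely if $w(G)$ hits $y=x+p+1$ at $(a,a+p+1)$ then $a+p+1\le k$ automatically and the height $a+p+1$ is reached at step $\le 2a+p+1$, producing a valid index $i=a+p+1$. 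Thus $\hf$ is contained in the set of $G$ whose walk hits $y=x+p+1$, and Proposition \ref{prop-2.5} with $(a,b)=(0,0)$ and $t=p+1$ (the side condition $a+t\le k$ being exactly $p\le k-1$) yields $|\hf|\le\binom{n}{k-p-1}$.

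For the second inequality, put $A'=(p,p+2,\ldots,p+2k-2)$, with $i$-th element $p+2i-2$, so $A'\preceq G$ is equivalent to $y_i\ge p+2i-2$ for all $i\in[k]$. As above $\hf\subseteq\binom{[n]}{k}\setminus\{G\colon A'\preceq G\}$, so it suffices to bound from below the number $M$ of $G$ with $y_i\ge p+2i-2$ for every $i$. Such a $G$ has $y_1\ge p$, so $w(G)$ opens with $p-1$ East steps; deleting them identifies this set with the walks from $(0,0)$ to $(n-k-p+1,k)$ that stay weakly below $y=x+1$, i.e.\ never hit $y=x+2$. Counting the complementary (hitting) walks by Proposition \ref{prop-2.5} with $t=2$ gives $M=\binom{n-p+1}{k}-\binom{n-p+1}{k-2}$, which Pascal's identity rewrites as $\binom{n-p+2}{k}-\binom{n-p+2}{k-1}$; subtracting from $\binom{n}{k}$ gives \eqref{ineq-walks2}.

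The genuinely routine steps are the rearrangements of the $\prec$-inequalities, the deletion of the leading East steps, and the Pascal identity. The point that needs care is the dictionary "$A\not\preceq G \Leftrightarrow w(G)$ hits $y=x+p+1$" (and its analogue for $A'$): one must verify that the height index it produces always lies in the admissible range, and that the endpoints $(0,0)$ and $(n-k,k)$ — or, after the shift, $(0,0)$ and $(n-k-p+1,k)$ — really lie strictly below the relevant line, which is where one uses that $A,A'\subseteq[n]$ forces $2k-p\le n$, respectively $2k+p-2\le n$.
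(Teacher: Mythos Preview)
Your argument is correct. For \eqref{ineq-walks1} it coincides with the paper's: initiality forces each $w(F)$ to hit $y=x+p+1$, and Proposition~\ref{prop-2.5} does the rest.

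For \eqref{ineq-walks2} you use a variant of the paper's decomposition. The paper splits $\hf$ according to whether $F\cap[p-2]=\emptyset$: the part with $F\cap[p-2]\neq\emptyset$ is bounded trivially by $\binom{n}{k}-\binom{n-p+2}{k}$, and for $F$ with $F\cap[p-2]=\emptyset$ one observes that $w(F)$, starting from $(p-2,0)$, must hit the line $y=x-p+3$, so Proposition~\ref{prop-2.5} gives $\binom{n-p+2}{k-1}$ for that part. You instead bound $|\hf|$ by $\binom{n}{k}-M$ with $M=|\{G:A'\preceq G\}|$, strip $p-1$ leading East steps, and compute $M$ exactly as a non-hitting walk count $\binom{n-p+1}{k}-\binom{n-p+1}{k-2}$, then use Pascal to match the stated form. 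Your route makes transparent that the right-hand side of \eqref{ineq-walks2} is precisely $|\{G\in\binom{[n]}{k}:A'\not\preceq G\}|$ (so the bound is tight for initial families); the paper's split reaches the same expression without the final Pascal manipulation.
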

\begin{proof}
  If
$(1,2,\ldots,p,p+2,p+4,\ldots,2k-p)\notin \hf$, then for each $F\in \hf$, $w(F)$ hits the line $y=x+p+1$. By Proposition \ref{prop-2.5},
\begin{align*}
|\hf| &\leq \binom{n}{k-p-1}.
\end{align*}

Note that
\[
|\{F\in\hf\colon  F\cap [p-2]\neq \emptyset\}| \leq \binom{n}{k}-\binom{n-p+2}{k}.
\]
Suppose that $(p,p+2,p+4,\ldots,p+2k-2)\notin \hf$. Then $w(F)$ hits the line $y=x-p+3$ for each $F\in \hf$ with $F\cap [p-2]=\emptyset$. By Proposition \ref{prop-2.5},
\begin{align*}
&\qquad |\{F\in\hf\colon  F\cap [p-2]\neq \emptyset\}| \\[5pt]
&\leq \mbox{the number of walks from }(p-2,0) \mbox{ to } (n-k,k) \mbox{ hitting the line }y=x-p+3 \\[5pt]
&=  \binom{n-p+2}{k-1}.
\end{align*}
Thus \eqref{ineq-walks2} follows.
\end{proof}

We need the following inequality in the proofs.

\begin{prop}
Let $n, r, a, b$ be positive integers, $n\geq r+a$, $n>r>b$. Then
\begin{align}\label{ineq-key}
\frac{\binom{n-a}{r}}{\binom{n}{r-b}} \geq \left(\frac{n-r+b-a+1}{n-a+1}\right)^a\left(\frac{n-r+b-a}{r}\right)^b.
\end{align}
\end{prop}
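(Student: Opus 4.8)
The plan is to prove the inequality by expanding both binomial coefficients into products and matching factors. Write
\[
\frac{\binom{n-a}{r}}{\binom{n}{r-b}} = \frac{(n-a)!\,(r-b)!\,(n-r+b)!}{n!\,r!\,(n-a-r)!},
\]
and reorganize the right-hand side as a product of three groups of ratios: one group of $a$ factors coming from the passage $n!\to(n-a)!$ versus $(n-r+b)!\to(n-a-r+b)!$ — hmm, let me instead group it as $a$ factors that will be bounded below by $\frac{n-r+b-a+1}{n-a+1}$ and $b$ factors bounded below by $\frac{n-r+b-a}{r}$. Concretely, I would first handle the $b$ difference between $r-b$ and $r$: the ratio $\frac{(r-b)!}{r!}\cdot\frac{(n-r+b)!}{(n-r)!}$ telescopes into $\prod_{j=0}^{b-1}\frac{n-r+b-j}{r-j}$, each factor of which is at least $\frac{n-r+b-a}{r}$ once we check $a\le$ (something), using $n\ge r+a$ and $r>b$. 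Then the remaining ratio $\frac{(n-a)!\,(n-a-r+b)!^{-1}}{n!\,(n-r+b)!^{-1}}$ — after cancelling — telescopes into $\prod_{i=0}^{a-1}\frac{n-r+b-i}{n-i}$; I would bound each of these $a$ factors from below by $\frac{n-r+b-a+1}{n-a+1}$, noting that $x\mapsto\frac{x-(r-b)}{x}$ is increasing so the smallest factor is the one with the smallest numerator, i.e.\ $i=a-1$, giving $\frac{n-r+b-a+1}{n-a+1}$.

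So the skeleton is: (1) write the quotient as an explicit finite product of $a+b$ simple fractions via factorial cancellation; (2) split those fractions into an "$a$-block" of the form $\frac{n-r+b-i}{n-i}$ for $0\le i\le a-1$ and a "$b$-block" of the form $\frac{n-r+b-j}{r-j}$ for $0\le j\le b-1$ (one must be a little careful to assign the shared indices correctly, but there is a clean bookkeeping once one writes everything in terms of $(n-r+b)!/(n-r)!$, $(r-b)!/r!$, and $(n-a)!/n!\cdot (n-a-r+b)!/(n-r+b)!$ — wait, that double-counts $(n-r+b)!$, so the correct single decomposition is the three-factorial-ratio identity above, which produces exactly $a+b$ linear factors in the numerator of the form $n-r+b-\ell$ and $a+b$ in the denominator, $a$ of them being $n-i$ and $b$ of them being $r-j$); (3) bound each of the $a$ factors below by the claimed $\frac{n-r+b-a+1}{n-a+1}$ using monotonicity, and each of the $b$ factors below by $\frac{n-r+b-a}{r}$, the latter using $a\ge 1$ and $j\ge 0$ so that $n-r+b-j\ge n-r+b-a$ (trivially true) while $r-j\le r$; (4) multiply the $a$ lower bounds and the $b$ lower bounds to obtain the two powers on the right-hand side of \eqref{ineq-key}.

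The main obstacle — and really the only nontrivial point — is the bookkeeping in step (2): correctly identifying which of the $a+b$ numerator factors $n-r+b-\ell$ pairs with a $(n-i)$ denominator and which with a $(r-j)$ denominator, and then checking that \emph{every} factor in each block is genuinely $\ge$ the stated uniform bound. For the $a$-block this needs $n-r+b-i \ge \frac{(n-r+b-a+1)(n-i)}{n-a+1}$ for $0\le i\le a-1$, which after clearing denominators reduces to a manifestly true inequality given $n-a+1>0$; for the $b$-block one needs $n-r+b-j\ge r-j$ times $\frac{n-r+b-a}{r}$, and since $n-r+b-j\ge n-r+b-a$ (as $j\le a-1$ would be the wrong direction — actually $j$ ranges over $0\le j\le b-1$, and we only know $b<r$, so I should instead use that each factor equals $\frac{n-r+b-j}{r-j}$ and this is increasing in $j$ as well, so its minimum over $0\le j\le b-1$ is at $j=0$, giving $\frac{n-r+b}{r}\ge\frac{n-r+b-a}{r}$ since $a\ge 1$ — wait that's $\ge$, good). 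I would double check the edge hypotheses $n\ge r+a$ and $n>r>b$ are exactly what is needed to keep all these linear terms nonnegative and the monotonicity arguments valid, and that is the end of the proof.
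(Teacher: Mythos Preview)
Your approach --- write the quotient as a product of $a+b$ simple fractions and bound each factor below --- is exactly the paper's. But there are two concrete problems.

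First, your bookkeeping is inconsistent. The $a+b$ numerator factors are $n-r+b-\ell$ for $0\le\ell\le a+b-1$; if the $a$-block consumes $\ell=0,\dots,a-1$ (your $\frac{n-r+b-i}{n-i}$), then the $b$-block must consume $\ell=a,\dots,a+b-1$, so its factors are $\frac{n-r+b-a-j}{r-j}$, not $\frac{n-r+b-j}{r-j}$. As written, your two blocks reuse the first $\min(a,b)$ numerator factors --- the double-counting you yourself flagged but never resolved. The paper uses precisely this corrected split.

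Second, and more seriously, even with the correct split the termwise lower bound on the $b$-block does not follow from the stated hypotheses. For $j>0$ the inequality $\frac{n-r+b-a-j}{r-j}\ge\frac{n-r+b-a}{r}$ is equivalent (cross-multiply) to $n-r+b-a\ge r$, i.e.\ $n\ge 2r+a-b$, which is not implied by $n\ge r+a$ and $r>b$. (Your alternative monotonicity claim, that $\frac{n-r+b-j}{r-j}$ is increasing in $j$, likewise needs $n\ge 2r-b$.) In fact \eqref{ineq-key} is \emph{false} under the hypotheses as stated: for $(n,r,a,b)=(4,3,1,2)$ the left side equals $\binom{3}{3}/\binom{4}{1}=\tfrac14$ while the right side equals $\tfrac34\cdot(\tfrac23)^2=\tfrac13$. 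The paper's own one-line argument has the same unacknowledged gap; every application of the proposition in the paper happens in a regime where $n$ exceeds a fixed multiple of $r$, so $n\ge 2r+a-b$ holds automatically there --- but no proof of the proposition exactly as stated can succeed.
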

\begin{proof}
The inequality follows directly  from
\begin{align*}
\frac{\binom{n-a}{r}}{\binom{n}{r-b}} &=\frac{(n-r+b)(n-r+b-1)\ldots(n-r+b-a+1)}{n(n-1)\ldots(n-a+1)}\\[5pt]
& \qquad \times \frac{(n-r+b-a)(n-r+b-a-1)\ldots(n-r-a+1)}{r(r-1)\ldots(r-b+1)}\\[5pt]
&\geq \left(\frac{n-r+b-a+1}{n-a+1}\right)^{a}\left(\frac{n-r+b-a}{r}\right)^b.
\end{align*}
\end{proof}

\section{Proof of Theorem \ref{thm-main}}
For $E=(a_1,a_2,\ldots,a_k)$, let $E-p$ denote the set $(a_1-p,a_2-p,\ldots,a_k-p)$. Let $\hf\subset \binom{[p+1,n]}{k}$ be an initial family. Define the {\it left-translate} $L_p(\hf)$  of $\hf$ as
\[
L_p(\hf) :=\{F-p\colon F\in \hf\}.
\]
Clearly, $L_p(\hf)\subset\binom{[n-p]}{k}$ is also an initial family.

\begin{proof}[Proof of Theorem \ref{thm-main}]
 Noting that shifting does not change the overflow $|\hf\setminus \hk(n,2d)|$, we may assume that $\hf$ is initial. Since adding a subset $F_0\subset F\in \hf$ to $\hf$ will not affect the $u$-union property, we may assume that $\hf$ is a {\it complex}, that is, $F_0\subset F\in \hf$
 implies $F_0\in \hf$.

 If $\hf^{(\ell)}=\emptyset$ for all $\ell\geq d+2$ then $\hf\setminus \hk(n,2d)=\hf^{(d+1)}$. By Fact \ref{ineq-1.8}, $\hf^{(d+1)}$ is 2-intersecting. Thus  by \eqref{ineq-ekr}, $|\hf^{(d+1)}|\leq \binom{n-2}{d-1}$ for $n>3d$ and the theorem holds.

 From now on we assume $\hf^{(\ell)}\neq \emptyset$ for some $\ell\geq d+2$. Since $\hf$ is a complex, $\hf^{(d+2)}\neq\emptyset$. By initiality $[d+2]\in \hf$ follows. By Fact \ref{ineq-1.8}, $\hf^{(d+1)}, \hf^{(d+2)}$ are cross 3-intersecting. Note that $|[d+2]\cap(1,2,d+3,d+5,\ldots,3d-1)|=2$. It follows that $(1,2,d+3,d+5,\ldots,3d-1)\notin \hf^{(d+1)}$.

 Let $\hf_0^{(d+1)}=\{F\in \hf^{(d+1)}\colon \{1,2\}\subset F\}$ and let $\hf_1^{(d+1)}=\hf^{(d+1)}\setminus \hf_0^{(d+1)}$. We distinguish two cases.

 \vspace{5pt}
 {\bf Case 1. } $(1,2,4,6,8,\ldots,2d)\notin \hf^{(d+1)}$.
 \vspace{5pt}

 Since $(1,2,4,6,8,\ldots,2d)\notin \hf^{(d+1)}$, $(2,4,6,\ldots,2d-2)\notin L_2(\hf^{(d+1)}([2]))$. By \eqref{ineq-walks2} we have
\begin{align}\label{ineq-case1-1}
|\hf_0^{(d+1)}|=|L_2(\hf^{(d+1)}([2]))|\leq  \binom{n-2}{(d-1)-1}=\binom{n-2}{d-2}.
\end{align}

Since $\hf^{(d+1)}$ is 2-intersecting, by initiality $\hf(\{\ell\},[2])$ is also $2$-intersecting for $\ell=1,2$. By \eqref{ineq-universalbound2} we have $|\hf(\{\ell\},[2])| \leq \binom{n-3}{d-2}$. The initiality also implies  that $\hf(\emptyset,[2])$ is  4-intersecting. Using \eqref{ineq-universalbound2} we have $|\hf(\emptyset,[2])| \leq \binom{n-3}{d-3}$. Thus,
\begin{align}\label{ineq-case1-2}
|\hf_1^{(d+1)}| &=|\hf(\{1\},[2])|+|\hf(\{2\},[2])|+|\hf(\emptyset,[2])|\nonumber\\[5pt]
&\leq 2\binom{n-3}{d-2}+\binom{n-3}{d-3}=\binom{n-3}{d-2}+\binom{n-2}{d-2}.
\end{align}

 By Fact \ref{ineq-1.8} and Proposition \ref{prop-2.4},
\[
|\hf^{(d+i)}| \leq \binom{n}{d+i-2i} =\binom{n}{d-i}.
\]
Note that $n\geq 6d$ implies that
\[
\frac{\binom{n}{d-i}}{\binom{n}{d-i-1}} =\frac{n-d+i+1}{d-i}> 5.
\]
It follows that
\begin{align}\label{ineq-case1-3}
\sum_{2\leq i\leq d} |\hf^{(d+i)}| \leq \sum_{2\leq i\leq d} \binom{n}{d-i} \leq \binom{n}{d-2}+\frac{5}{4}\binom{n}{d-3}&\leq \binom{n}{d-2}+\frac{5}{4}\frac{n^2}{(n-d)^2}\binom{n-2}{d-3}\nonumber\\[5pt]
&\leq \binom{n}{d-2}+\frac{9}{5}\binom{n-2}{d-3}.
\end{align}

Adding \eqref{ineq-case1-1}, \eqref{ineq-case1-2} and \eqref{ineq-case1-3}, we get
\begin{align*}
\sum_{1\leq i\leq d}|\hf^{(d+i)}| &\leq  \binom{n-3}{d-2}+2\binom{n-2}{d-2}+\binom{n}{d-2}+\frac{9}{5}\binom{n-2}{d-3}\\[5pt] &=\binom{n-3}{d-2}+\frac{1}{5}\binom{n-2}{d-2}+\frac{9}{5}\binom{n-1}{d-2}+\binom{n}{d-2}.
\end{align*}
By  $n\geq 6d$,
\begin{align*}
\sum_{1\leq i\leq d}|\hf^{(d+i)}|
&\leq \left(\frac{d-1}{n-2}+\frac{1}{5}\frac{d}{n-d}+\frac{9}{5}\frac{dn}{(n-d)^2}+\frac{d n^2}{(n-d)^3}\right)\binom{n-2}{d-1}\\[5pt]
&\leq \left(\frac{1}{6}+\frac{1}{5}\times\frac{1}{5}+\frac{9}{5}\times\frac{6}{5^2}+\frac{ 6^2}{5^3}\right)\binom{n-2}{d-1}\\[5pt]
&<\binom{n-2}{d-1}.
\end{align*}

 \vspace{5pt}
 {\bf Case 2. } $(1,2,4,6,8,\ldots,2d)\in \hf^{(d+1)}$.
 \vspace{5pt}

Choose the minimal $j$ such that $(1,2,j,j+2,j+4,\ldots,j+2d-4)\notin  \hf_0^{(d+1)}$. Clearly we have $5\leq j\leq d+3$.

By minimality of $j$, $(1,2,j-1,j+1,j+3,\ldots,j+2d-5)\in \hf_0^{(d+1)}$. Then the 2-intersecting property implies $(1,3,\ldots,j-2,j,j+2,\ldots,j+2(d-j+3))\notin \hf_1^{(d+1)}$. By initiality $w(F)$  hits  the line $y=x+j-3$ for every $F\in \hf_1^{(d+1)}$. Note that $w(F)$ does not pass point $(0,2)$. It follows that $w(F)$ passes either $(1,1)$ or $(2,0)$. Moreover there are 2 ways from $(0,0)$ to $(1,1)$. Thus by Proposition \ref{prop-2.5},
\begin{align}
 \quad |\hf_1^{(d+1)}|&\leq  2\times\mbox{the number of walks from }(1,1) \mbox{ to }(n-d-1,d+1)\nonumber\\[2pt]
 &\qquad \qquad \mbox{ hitting the line }y=x+j-3\nonumber\\[5pt]
 &\qquad +\mbox{the number of walks from }(2,0) \mbox{ to }(n-d-1,d+1)\nonumber\\[2pt]
 &\qquad \qquad \mbox{ hitting the line }y=x+j-3\nonumber\\[5pt]
 &=2\binom{n-2}{d+3-j}+\binom{n-2}{d+2-j}= \binom{n-2}{d+3-j}+\binom{n-1}{d+3-j}.\label{ineq-case2-1}
\end{align}

\begin{claim}
\begin{align}\label{ineq-case2-2}
|\hf_0^{(d+1)}|\leq \binom{n-2}{d-1} - \binom{n-j+2}{d-1}+ \binom{n-j+2}{d-2}.
\end{align}
\end{claim}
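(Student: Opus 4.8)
The plan is to reduce the bound on $|\hf_0^{(d+1)}|$ to a single application of \eqref{ineq-walks2}, after discarding the two common elements $1,2$ and the initial segment forced by the choice of $j$. First I would pass to $\he:=L_2\bigl(\hf^{(d+1)}([2])\bigr)\subset\binom{[n-2]}{d-1}$, which is again initial and has $|\he|=|\hf_0^{(d+1)}|$. Put $p:=j-2$, so that $3\le p\le d+1$ by the bounds $5\le j\le d+3$ already in hand. The choice of $j$, i.e.\ $(1,2,j,j+2,\dots,j+2d-4)\notin\hf_0^{(d+1)}$, says precisely that the $(d-1)$-element set $G_0:=(p,p+2,\dots,p+2(d-1)-2)$ does not lie in $\he$.

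Next I would split $\he$ according to whether a member meets $[p-2]$ (a nonempty set, as $p\ge3$). Members meeting $[p-2]$ number at most $\binom{n-2}{d-1}-\binom{n-p}{d-1}$, since the ones avoiding it are exactly the $(d-1)$-subsets of the $(n-p)$-element set $[p-1,n-2]$. For the subfamily $\he'$ of members of $\he$ disjoint from $[p-2]$, deleting the initial segment $[p-2]$ preserves initiality, so $\he'':=L_{p-2}(\he')$ is initial on its $(n-p)$-element ground set; and since $G_0$ is disjoint from $[p-2]$ and $G_0\notin\he$, its translate $(2,4,\dots,2(d-1))$ is not in $\he''$. This last set is the forbidden set in \eqref{ineq-walks2} for uniformity $d-1$ and parameter $2$, so \eqref{ineq-walks2} applied to $\he''$ gives $|\he'|=|\he''|\le\binom{n-p}{d-1}-\binom{n-p}{d-1}+\binom{n-p}{d-2}=\binom{n-p}{d-2}$. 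Adding the two estimates and using $n-p=n-j+2$ yields exactly \eqref{ineq-case2-2}.

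The one point needing care is the admissible range of the parameter in \eqref{ineq-walks2}. I route everything through the instance ``$p=2$'' rather than feeding $j-2$ directly into \eqref{ineq-walks2}, precisely because $j-2$ may be as large as $d+1$, outside the stated range $2\le p\le k-1$, whereas ``$p=2$'' sits safely inside it once $d\ge4$. The residual small cases $d\le3$ cause no trouble: there the reflection count behind \eqref{ineq-walks2} still applies verbatim, because every walk coming from a member of $\he''$ is forced onto the line $y=x+1$, which is not already met at the origin. I expect this boundary bookkeeping---not any new idea---to be the only real obstacle; the substance is simply the reflection principle already packaged in Proposition \ref{prop-2.5} and \eqref{ineq-walks2}.
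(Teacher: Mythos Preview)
Your argument is correct and is essentially the paper's own proof: the paper applies \eqref{ineq-walks2} directly to $L_2\bigl(\hf^{(d+1)}([2])\bigr)\subset\binom{[n-2]}{d-1}$ with parameter $p=j-2$, obtaining the claimed bound in one line. Your extra translation by $p-2$ and appeal to the instance ``$p=2$'' merely re-derives that application while staying inside the stated range $2\le p\le k-1$; as you yourself observe, the upper restriction $p\le k-1$ plays no role in the reflection argument behind \eqref{ineq-walks2}, which is why the paper does not bother with it.
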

\begin{proof}
Since $(1,2,j,j+2,j+4,\ldots,j+2d-4)\notin  \hf_0^{(d+1)}$,
\[
(j-2,j,j+2,\ldots,j+2d-6)\notin  L_2(\hf_0^{(d+1)}([2])).
\]
By  \eqref{ineq-walks2} it follows that
\begin{align*}
|\hf_0^{(d+1)}|=|L_2(\hf^{(d+1)}([2]))| \leq \binom{n-2}{d-1}-\binom{n-j+2}{d-1}+\binom{n-j+2}{d-2}.
\end{align*}
\end{proof}

By Fact \ref{ineq-1.8}, we know that $\hf^{(d+1)},\hf^{(d+i)}$ are cross $(i+1)$-intersecting. Since $(1,2,j-1,j+1,j+3,\ldots,j+2d-5)\in \hf_0^{(d+1)}$,
\[
(1,2,3,\ldots,j+2i-8,j+2i-7,j+2i-6,j+2i-4,\ldots,2d-j+6)\notin \hf^{(d+i)}.
\]
By \eqref{ineq-walks1},
\begin{align*}
 |\hf^{(d+i)}|&\leq \binom{n}{d+i-(j+2i-6)-1}=\binom{n}{d+5-i-j}.
\end{align*}
Since $n\geq 6d$ implies
\[
\frac{\binom{n}{d+5-i-j}}{\binom{n}{d+4-i-j}}= \frac{n-d-4+i+j}{d+5-i-j}\geq 5,
\]
we have
\begin{align}\label{ineq-case2-3}
\sum_{2\leq i\leq d} |\hf^{(d+i)}| &<\binom{n}{d+3-j}+\frac{5}{4}\binom{n}{d+2-j}\nonumber\\[5pt]
&<\binom{n}{d+3-j}+\frac{5}{4}\frac{n}{n-d}
\binom{n-1}{d+2-j}\nonumber\\[5pt]
&< \binom{n}{d+3-j}+2\binom{n-1}{d+2-j}.
\end{align}

Adding \eqref{ineq-case2-1}, \eqref{ineq-case2-2} and \eqref{ineq-case2-3},
\[
|\hf\setminus \hk(n,2d)|= \sum_{1\leq i\leq d}|\hf^{(d+i)}| \leq  3\binom{n}{d+3-j}+\binom{n-2}{d-1} - \binom{n-j+2}{d-1}+ \binom{n-j+2}{d-2}.
\]
We are left to show that
\begin{align}\label{ineq-case2-4}
\binom{n-j+2}{d-1}-\binom{n-j+2}{d-2}-3\binom{n}{d+3-j} > 0.
\end{align}
Since $n\geq 6d$ and $j\leq d+3$ imply
\[
\frac{\binom{n-j+2}{d-1}}{\binom{n-j+2}{d-2}}=\frac{n-j-d+4}{d-1} > 4,
\]
it follows that $\binom{n-j+2}{d-1}-\binom{n-j+2}{d-2}> \frac{3}{4}\binom{n-j+2}{d-1}$. By \eqref{ineq-key}, $5\leq j\leq d+3$ and $n\geq 6d$,
\begin{align*}
\frac{\binom{n-j+2}{d-1}}{\binom{n}{d+3-j}}\geq \left(\frac{n-d}{n-j+3}\right)^{j-2}\left(\frac{n-d-1}{d-1}\right)^{j-4}
\geq 5^{2j-6}/6^{j-2}.
\end{align*}
For $j\geq 6$,
\begin{align*}
\frac{3}{4}\binom{n-j+2}{d-1}- 3\binom{n}{d+3-j}&\geq \frac{3}{4}\binom{n}{d+3-j} \left(5^{2j-6}/6^{j-2}-4\right)\\[5pt]
&\geq \frac{3}{4}\binom{n}{d+3-j} \left(5^6/6^4-4\right)>0.
\end{align*}
Thus \eqref{ineq-case2-4} holds.

For $j=5$, by $n\geq 6d$ we have
\begin{align*}
 &\quad \ \binom{n-3}{d-1}+ \binom{n-3}{d-2}- 3\binom{n}{d-2}\\[5pt]
 &= \binom{n}{d-2} \left(\frac{(n-d+2)(n-d+1)(n-d)(n-d-1)}{(d-1)n(n-1)(n-2)}+\frac{(n-d+2)(n-d+1)(n-d)}{n(n-1)(n-2)}-3\right)\\[5pt]
  &\geq \binom{n}{d-2} \left(\frac{(n-d+1)(n-d)^3}{(d-1)n(n-1)(n-2)}+\frac{(n-d+2)(n-d+1)(n-d)}{n(n-1)(n-2)}-3\right)\\[5pt]
&\geq \binom{n}{d-2} \left(\frac{5^4}{6^3}+\frac{5^3}{6^3}-3\right)>0.
\end{align*}
Thus \eqref{ineq-case2-4} holds and the theorem is proven.
\end{proof}

\section{Proof of Theorem \ref{thm-main2}}
We are going to derive Theorem \ref{thm-main2} from the following proposition.

\begin{prop}\label{lem-3.1}
Let $\hf_i\subset \binom{[n]}{d+1+i}$, $i=1,2,\ldots,d$. Suppose that $\hf_i$ is $(2i+1)$-intersecting and $\hf_i,\hf_j$ are cross $(i+j+1)$-intersecting. Then for $n\geq  9d$,
\[
\sum_{1\leq i\leq d} |\hf_i|\leq \binom{n-3}{d-1}.
\]
\end{prop}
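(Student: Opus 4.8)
The plan is to shift all the families simultaneously so that each $\hf_i$ becomes initial, then apply the random-walk machinery (Corollary \ref{cor-2.3} and Proposition \ref{prop-2.4}) family by family, and finally sum the resulting binomial bounds, absorbing all the small terms into the dominant term $\binom{n-3}{d-1}$ using the fast decay of binomial coefficients when $n\geq 9d$. Since the $i\leftarrow j$ shift preserves the $t$-intersecting and cross-$t$-intersecting properties simultaneously when applied to all families at once, we may assume every $\hf_i$ is initial. The crude bound from Proposition \ref{prop-2.4} gives, for each $i\geq 1$, $|\hf_i|=|\hf_i^{((2i+1)+(d-i))}|\leq\binom{n}{d-i}$; but this alone is not quite enough for the $i=1$ term, which would give $\binom{n}{d-1}$ rather than $\binom{n-3}{d-1}$, so the first order of business is to get a better bound on $|\hf_1|$.

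For $|\hf_1|$ I would exploit the cross-intersecting hypotheses with $\hf_2$ (cross $4$-intersecting) together with the fact that $\hf_1$ is $3$-intersecting and initial. The idea: either $\hf_2$ is ``small'' — in which case by initiality some short walk is forbidden in $\hf_2$, which via cross $4$-intersection forces a forbidden walk in $\hf_1$ pushing its walks onto a higher line $y=x+t$ with $t>3$, so Proposition \ref{prop-2.4}/Proposition \ref{prop-2.5} yields $|\hf_1|\leq\binom{n}{d-2}$ or better — or else $\hf_2$ contains a specific initial set, which by cross $4$-intersection with $\hf_1$ forbids a short walk in $\hf_1$ and again improves the bound. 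In effect, one mimics the dichotomy used in the proof of Theorem \ref{thm-main} (Case 1 vs. Case 2 there), splitting on whether a particular ``diagonal'' set $(1,3,5,\ldots)$ of size $d+2$ belongs to $\hf_2$. The goal is to show $|\hf_1|\leq\binom{n-3}{d-1}$ up to lower-order terms that the other families can pay for, or more precisely that $|\hf_1|$ plus the tails is at most $\binom{n-3}{d-1}$.

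Once $|\hf_1|$ is controlled, the tail $\sum_{2\leq i\leq d}|\hf_i|\leq\sum_{2\leq i\leq d}\binom{n}{d-i}$ is a geometric-type sum: since $n\geq 9d$ gives $\binom{n}{d-i}/\binom{n}{d-i-1}=(n-d+i+1)/(d-i)\geq 8$, this sum is at most $\tfrac{8}{7}\binom{n}{d-2}$, and one converts $\binom{n}{d-2}$ to a small multiple of $\binom{n-3}{d-1}$ via $\binom{n}{d-2}\leq\frac{(d-1)}{n-d+2}\cdot\frac{n^3}{(n-3)^3}\binom{n-3}{d-1}$, which is a constant (roughly $\tfrac19$) times $\binom{n-3}{d-1}$. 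Adding the $\hf_1$ bound to this, the total is a sum of explicit constants times $\binom{n-3}{d-1}$ that one checks is $<1$ (or exactly $\le 1$, matching the extremal family $\hg$), exactly as in the final arithmetic of the Case 1 computation in Theorem \ref{thm-main}. The main obstacle is the first step: squeezing $|\hf_1|$ down from the naive $\binom{n}{d-1}$ to essentially $\binom{n-3}{d-1}$, since the extremal configuration (the triangle family $\hht(n,d+1)$ living inside $\hf_1$) is genuinely close to this bound, so the dichotomy and the reflection-principle counts have to be done carefully and with the correct forbidden walks; the remaining steps are routine binomial estimates valid once $n\geq 9d$.
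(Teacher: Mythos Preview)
Your overall architecture (shift, bound $|\hf_1|$ carefully, bound the tail geometrically) is correct, but the dichotomy you propose for controlling $|\hf_1|$ is backwards and cannot work. You split on whether a particular set lies in $\hf_2$, claiming that when $\hf_2$ is ``small'' (i.e.\ that set is absent), cross $4$-intersection ``forces a forbidden walk in $\hf_1$'' and hence $|\hf_1|\leq\binom{n}{d-2}$. But cross-intersection constrains $\hf_1$ only through sets that are \emph{present} in $\hf_2$; the absence of a set from $\hf_2$ tells you nothing whatsoever about $\hf_1$. Worse, the extremal configuration here is precisely $\hf_2=\cdots=\hf_d=\emptyset$ with $\hf_1=\{F\in\binom{[n]}{d+2}:[3]\subset F\}$, so that $|\hf_1|=\binom{n-3}{d-1}\gg\binom{n}{d-2}$; your claimed bound in the ``small $\hf_2$'' branch is therefore false on the nose. (Incidentally, the extremal $\hf_1$ is the $3$-star on $(d+2)$-sets, not a triangle family on $(d+1)$-sets.)

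The paper's proof pivots on $\hf_1$ rather than on $\hf_2$. Its three cases are governed by which staircase sets $(1,2,3,j,j+2,\ldots)$ lie in $\hf_1$: if $(1,2,3,d+4,d+6,\ldots,3d)\in\hf_1$ then the $3$-intersecting property already forces $\hf_1$ into the full star of $[3]$, and cross-intersection with this long set kills every $\hf_i$ for $i\geq 2$; if $(1,2,3,5,7,\ldots,2d+1)\notin\hf_1$ then the walk bounds \eqref{ineq-walks1}, \eqref{ineq-universalbound2} make $|\hf_1|$ strictly smaller than $\binom{n-3}{d-1}$ with enough room for the tail; otherwise a minimal-$j$ argument interpolates between the two, using \eqref{ineq-walks2} for $\hf_{1,0}$ and Proposition~\ref{prop-2.5} for $\hf_{1,1}$. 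The essential point is that the information flows \emph{outward} from $\hf_1$: sets present in $\hf_1$ constrain $\hf_i$ for $i\geq 2$ via cross-intersection, while sets absent from $\hf_1$ constrain $|\hf_1|$ itself via forbidden walks. Your proposal tries to run this flow inward from $\hf_2$, which is why it breaks down exactly at the extremal example.
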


\begin{proof}
Assume that $\hf_i$ is initial for all $i=1,2,\ldots,d$.  Let $\hf_{1,0}=\{F\in \hf_1\colon \{1,2,3\}\subset F\}$ and let $\hf_{1,1}=\hf_1\setminus \hf_{1,0}$. We distinguish three cases.

 \vspace{5pt}
 {\bf Case 1. } $(1,2,3,d+4,d+6,\ldots,3d)\in \hf_{1}$.
 \vspace{5pt}

The 3-intersecting property  implies  $(1,2,4,5,\ldots,d+3)\notin \hf_{1}$. It follows that every $F\in \hf_1$ contains $\{1,2,3\}$. Also, for $i\geq 2$,
 \[
 |\{1,2,3,d+4,d+6,\ldots,3d\}\cap [d+1+i]| < i+2.
 \]
Since $\hf_1,\hf_i$ are cross $(i+2)$-intersecting,  $\hf_i=\emptyset$ for all $i=2,3,\ldots,d$.  Thus,
\[
\sum_{1\leq i\leq d} |\hf_i|\leq |\hf_{1}|\leq  \binom{n-3}{d-1}.
\]

 \vspace{5pt}
 {\bf Case 2. } $(1,2,3,5,7,9,\ldots,2d+1)\notin \hf_{1}$.
 \vspace{5pt}

Then $(2,4,6,\ldots,2d-2)\notin L_3(\hf_1([3]))$.
By \eqref{ineq-walks1} we have
\begin{align}\label{ineq-thm2case2-1}
|\hf_{1,0}|=|L_3(\hf_1([3]))|\leq \binom{n-3}{d-2}.
\end{align}

Since $\hf_1$ is 3-intersecting, by initiality $\hf_1([3]\setminus \{\ell\},[3])$ is 2-intersecting, $\hf_1(\{\ell\},[3])$ is 4-intersecting for  $\ell=1,2,3$ and $\hf_1(\emptyset,[3])$ is 6-intersecting. By \eqref{ineq-universalbound2} we obtain that
\begin{align}
 |\hf_{1,1}|&=  \sum_{1\leq \ell\leq 3}|\hf_1([3]\setminus \{\ell\},[3])|+\sum_{1\leq \ell\leq 3}|\hf_1(\{\ell\},[3])|+|\hf_1(\emptyset,[3])|\nonumber\\[5pt]
 &\leq 3\binom{n-4}{d-2}+3\binom{n-4}{(d+1)-4}+\binom{n-4}{(d+2)-6}\nonumber\\[5pt]
  &= 3\binom{n-4}{d-2}+3\binom{n-4}{d-3}+\binom{n-4}{d-4}\nonumber\\[5pt]
 &=\binom{n-4}{d-2}+\binom{n-3}{d-2}+\binom{n-2}{d-2}.\label{ineq-thm2case2-2}
\end{align}

Since $\hf_i$ is $(2i+1)$-intersecting, by Proposition \ref{prop-2.4}
\[
|\hf_i| \leq \binom{n}{d+1+i-2i-1} =\binom{n}{d-i}.
\]
Note that $n\geq 7d$ implies that
\[
\frac{\binom{n}{d-i}}{\binom{n}{d-i-1}} =\frac{n-d+i+1}{d-i}> 6.
\]
Since $\binom{n}{d-3}=\frac{n}{n-d+3}\binom{n-1}{d-3}<\frac{7}{6}\binom{n-1}{d-3}$,
it follows that
\begin{align}\label{ineq-thm2case2-3}
\sum_{2\leq i\leq d} |\hf_i| \leq \sum_{2\leq i\leq d} \binom{n}{d-i} \leq \binom{n}{d-2}+\frac{6}{5}\binom{n}{d-3}< \binom{n}{d-2}+2\binom{n-1}{d-3}.
\end{align}
Adding  \eqref{ineq-thm2case2-1}, \eqref{ineq-thm2case2-2} and \eqref{ineq-thm2case2-3},
\[
\sum_{1\leq i\leq d}|\hf_i| \leq  \binom{n-4}{d-2} +2\binom{n-3}{d-2}+\binom{n-2}{d-2}+\binom{n}{d-2}+2\binom{n-1}{d-3}\leq  5\binom{n}{d-2}.
\]
By  $n\geq 8d$,
\begin{align*}
\sum_{1\leq i\leq d}|\hf_i| \leq 5\binom{n}{d-2}&= \frac{5 n(n-1)(d-1)}{(n-d+2)(n-d+1)(n-d)} \binom{n-2}{d-1}\\[5pt]
&\leq \frac{5 n^2d}{(n-d)^3}\binom{n-2}{d-1}\\[5pt]
&< \binom{n-2}{d-1}.
\end{align*}

 \vspace{5pt}
 {\bf Case 3. } $(1,2,3,d+4,d+6,\ldots,3d)\notin \hf_{1}$ and $(1,2,3,5,7,9,\ldots,2d+1)\in \hf_{1}$.
 \vspace{5pt}

Choose the minimal $j$ such that $(1,2,3,j,j+2,j+4,\ldots,j+2d-4)\notin  \hf_{1,0}$. Then $6\leq j\leq d+4$.

By minimality of $j$, $(1,2,3,j-1,j+1,j+3,\ldots,j+2d-5)\in \hf_{1,0}$. Then 3-intersecting property implies $(1,2,4,5,\ldots,j-2,j,j+2,\ldots,j+2(d-j+3),j+2(d-j+4))\notin \hf_{1,1}$. By initiality $w(F)$  hits the line $y=x+j-3$ for every $F\in \hf_{1,1}$. Note that $w(F)$ does not pass point $(0,3)$. It follows that $w(F)$ passes exactly one of $(1,2), (2,1), (3,0)$. Moreover there are 3 ways from $(0,0)$ to $(1,2)$ or $(2,1)$. Thus by Proposition \ref{prop-2.5},
\begin{align}
 |\hf_{1,1}|&\leq  3\times \mbox{the number of walks from }(1,2) \mbox{ to }(n-d-2,d+2)\nonumber\\[2pt]
 &\qquad \qquad \mbox{ hitting the line }y=x+j-3\nonumber\\[5pt]
 &\quad + 3\times \mbox{the number of walks from }(2,1) \mbox{ to }(n-d-2,d+2)\nonumber\\[2pt]
 &\qquad \qquad \mbox{ hitting the line }y=x+j-3\nonumber\\[5pt]
 &\quad +\mbox{the number of walks from }(3,0) \mbox{ to }(n-d-2,d+2)\nonumber\\[2pt]
 &\qquad \qquad \mbox{ hitting the line }y=x+j-3\nonumber\\[5pt]
 &=3\binom{n-3}{d+4-j}+3\binom{n-3}{d+3-j}+\binom{n-3}{d+2-j}\nonumber\\[5pt]
 &<3\binom{n-1}{d+4-j}.\label{ineq-thm2case3-1}
\end{align}

Since  $(1,2,3,j,j+2,j+4,\ldots,j+2d-4)\notin \hf_{1}$,
\[
(j-3,j-1,j+1,\ldots,j+2d-7)\notin L_3(\hf_{1}([3])).
\]
By \eqref{ineq-walks2},
\begin{align}\label{ineq-thm2case3-2}
|\hf_{1,0}| =|L_3(\hf_{1}([3]))|\leq \binom{n-3}{d-1} - \binom{n-j+2}{d-1}+ \binom{n-j+2}{d-2}.
\end{align}

Since $\hf_1,\hf_i$ are cross $(i+2)$-intersecting, by $(1,2,3,j-1,j+1,j+3,\ldots,j+2d-5)\in \hf_{1,0}$ we have
\[
(1,2,3,\ldots,j+2i-8,j+2i-7,j+2i-6,j+2i-4,\ldots,2d-j+8)\notin \hf_i.
\]
By \eqref{ineq-walks1} for $2\leq i\leq d$,
\begin{align*}
 |\hf_i|&\leq  \binom{n}{(d+1+i)-(j+2i-6)-1}=\binom{n}{d+6-i-j}.
\end{align*}
Since $n\geq 9d$ implies
\[
\frac{\binom{n}{d+6-i-j}}{\binom{n}{d+5-i-j}}= \frac{n-d-5+i+j}{d+6-i-j}\geq 8,
\]
we infer that
\begin{align}\label{ineq-thm2case3-3}
\sum_{2\leq i\leq d} |\hf_i| \leq  \binom{n}{d+4-j}+\frac{8}{7}\binom{n}{d+3-j}<  \binom{n}{d+4-j}+2\binom{n-1}{d+3-j}.
\end{align}
Adding \eqref{ineq-thm2case3-1},  \eqref{ineq-thm2case3-2} and  \eqref{ineq-thm2case3-3},
\[
 \sum_{1\leq i\leq d} |\hf_i| \leq 4\binom{n}{d+4-j}+\binom{n-3}{d-1} - \binom{n-j+2}{d-1}+ \binom{n-j+2}{d-2}.
\]
We are left to show that
\begin{align}\label{ineq-thm2case3-4}
 \binom{n-j+2}{d-1}- \binom{n-j+2}{d-2}-4\binom{n}{d+4-j} > 0.
\end{align}

Since $n\geq 9d$ implies that
\[
\frac{\binom{n-j+2}{d-1}}{\binom{n-j+2}{d-2}}=\frac{n-j-d+4}{d-1} > 7,
\]
it follows that $\binom{n-j+2}{d-1}-\binom{n-j+2}{d-2}\geq \frac{6}{7}\binom{n-j+2}{d-1}$. As $j\geq 6$ and $n\geq 9d$, by \eqref{ineq-key}
\begin{align*}
\frac{\binom{n-j+2}{d-1}}{\binom{n}{d+4-j}}\geq \left(\frac{n-d-1}{n-j+3}\right)^{j-2} \left(\frac{n-d-2}{d-1}\right)^{j-5}&\geq \left(\frac{n-d-1}{n-3}\right)^{j-2} \left(\frac{n-d}{d}\right)^{j-5}\\[5pt]
&\geq \left(\frac{8}{9}\right)^{j-2} 8^{j-5}= 8^{2j-7}/9^{j-2}.
\end{align*}
Thus,
\begin{align*}
\frac{6}{7}\binom{n-j+2}{d-1}- 4\binom{n}{d+4-j}&\geq \frac{6}{7}\binom{n}{d+4-j} \left(8^{2j-7}/9^{j-2}-\frac{14}{3}\right)\\[5pt]
&\geq \frac{5}{6}\binom{n}{d+4-j} \left(8^{5}/9^{4}-\frac{14}{3}\right)>0
\end{align*}
and the proposition is proven.
\end{proof}

\begin{proof}[Proof of Theorem \ref{thm-main2}]
Let $\hf\subset 2^{[n]}$ be $(2d+1)$-union. Let $\hf_i=\hf^{(d+1+i)}$, $i=1,2,\ldots,d$. By Fact \ref{ineq-1.8}, $\hf_i$ is $(2i+1)$-intersecting and $\hf_i,\hf_j$ are cross $(i+j+1)$-intersecting. By Proposition  \ref{lem-3.1},
\[
\sum_{1\leq i\leq d} |\hf^{(d+1+i)}| = \sum_{1\leq i\leq d} |\hf_i|\leq \binom{n-3}{d-1}.
\]
Since the $(2d+1)$-union property implies that $\hf^{(d+1)}$ is intersecting, by Theorem \ref{thm-diveristy} for $n>36(d+1)$ there exists $x\in [n]$ such that
\[
|\hf^{(d+1)}(\bar{x})|\leq \binom{n-3}{d-1}.
\]
Thus,
\[
|\hf\setminus \hk_x(n,2d+1)|\leq \sum_{1\leq i\leq d} |\hf^{(d+1+i)}| + |\hf^{(d+1)}(\bar{x})| \leq  2\binom{n-3}{d-1}.
\]
\end{proof}

\section{Proof of Theorem \ref{prop-1.9}}

We need Hilton's Lemma \cite{Hilton}, which is a useful reformulation of the Kruskal-Katona Theorem \cite{Kruskal,Katona}.

To state it let us recall the definition of the lexicographic order on $\binom{[n]}{k}$. For two distinct sets $F,G\in \binom{[n]}{k}$ we say that $F$ {\it precedes} $G$ if
\[
\min\{i\colon i\in F\setminus G\}<\min\{i\colon i\in G\setminus F\}.
\]
E.g., $(1,7)$ precedes $(2,3)$. For a positive integer $k$ let $\hl(n,k,m)$ denote the first $m$ members of $\binom{[n]}{k}$.

\begin{lem} [\cite{Hilton}]\label{lem-5.1}
 Let $n,a,b$ be positive integers, $n>a+b$. Suppose that $\ha\subset \binom{[n]}{a}$ and $\hb\subset \binom{[n]}{b}$ are cross-intersecting. Then $\hl(n,a,|\ha|)$ and $\hl(n,b,|\hb|)$ are cross-intersecting as well.
 \end{lem}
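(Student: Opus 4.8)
The plan is to deduce Lemma~\ref{lem-5.1} from the Kruskal--Katona Theorem by passing to complements. For a family $\hg\subset\binom{[n]}{c}$ and an integer $0\le d\le c$, write $\Delta^{(d)}\hg$ for its $d$-th shadow $\{D\in\binom{[n]}{d}\colon D\subset G\text{ for some }G\in\hg\}$, and put $\hg^{c}=\{[n]\setminus G\colon G\in\hg\}$. The starting point is the elementary reformulation of cross-intersection: a set $B$ satisfies $B\cap A=\emptyset$ for some $A\in\ha$ exactly when $B\subset[n]\setminus A\in\ha^{c}$, so $\ha$ and $\hb$ are cross-intersecting if and only if $\hb\cap\Delta^{(b)}(\ha^{c})=\emptyset$; hence $|\hb|\le\binom{n}{b}-\bigl|\Delta^{(b)}(\ha^{c})\bigr|$. (The hypothesis $n>a+b$ is what guarantees $b<n-a$, so that $\Delta^{(b)}$ is a genuine iterated shadow.)

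Next I would record two order-theoretic remarks. First, the complementation map $A\mapsto[n]\setminus A$ is an order-reversing bijection from $\binom{[n]}{a}$ to $\binom{[n]}{n-a}$ for the lexicographic order, since $A\setminus A'=(A')^{c}\setminus A^{c}$ for all $A,A'$; consequently $(\hl(n,a,m))^{c}$ is precisely the set of the \emph{last} $m$ members of $\binom{[n]}{n-a}$ in lexicographic order. Secondly, applying the relabelling $i\mapsto n+1-i$ to the usual colexicographic form of Kruskal--Katona turns it into the following ``final segment'' statement: among all $\hg\subset\binom{[n]}{c}$ with $|\hg|=m$, a final segment of the lexicographic order minimizes $\bigl|\Delta^{(d)}\hg\bigr|$, and the shadow of a final segment is again a final segment of the lexicographic order on $\binom{[n]}{d}$ (iterating one level at a time).

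With these in hand the proof assembles quickly. Set $\ha'=\hl(n,a,|\ha|)$ and $\hb'=\hl(n,b,|\hb|)$. By the first remark $(\ha')^{c}$ is the final segment of $\binom{[n]}{n-a}$ of size $|\ha|$, so by the second remark $\bigl|\Delta^{(b)}(\ha^{c})\bigr|\ge\bigl|\Delta^{(b)}\bigl((\ha')^{c}\bigr)\bigr|$ and $\Delta^{(b)}\bigl((\ha')^{c}\bigr)$ is a final segment of the lexicographic order on $\binom{[n]}{b}$. Combining this with the size bound above gives
\[
|\hb'|=|\hb|\le\binom{n}{b}-\Bigl|\Delta^{(b)}\bigl((\ha')^{c}\bigr)\Bigr|.
\]
Thus the initial segment $\hb'$ and the final segment $\Delta^{(b)}\bigl((\ha')^{c}\bigr)$ of the totally ordered set $\binom{[n]}{b}$ have sizes summing to at most $\binom{n}{b}$, so they are disjoint; and disjointness of $\hb'$ from $\Delta^{(b)}\bigl((\ha')^{c}\bigr)=\{B\colon B\subset[n]\setminus A'\text{ for some }A'\in\ha'\}$ says exactly that every member of $\hb'$ meets every member of $\ha'$, i.e.\ that $\hl(n,a,|\ha|)$ and $\hl(n,b,|\hb|)$ are cross-intersecting, as desired.

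The only genuinely delicate part is the bookkeeping with orders: one must keep straight which version of Kruskal--Katona is in force (lexicographic versus colexicographic, initial versus final segment), confirm that complementation reverses the lexicographic order, and verify that the relabelling $i\mapsto n+1-i$ indeed converts the colex statement into the ``final segment'' lex statement used above. Once these identifications are fixed, the rest is purely formal.
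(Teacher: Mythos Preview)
Your proof is correct. The paper does not actually give a proof of Lemma~\ref{lem-5.1}; it simply cites Hilton and remarks that the lemma is ``a useful reformulation of the Kruskal--Katona Theorem.'' Your argument supplies precisely that reformulation: passing to complements, invoking Kruskal--Katona in its ``final segment of lex minimizes shadow'' form, and reading off cross-intersection as disjointness of an initial and a final segment. This is the standard derivation and is fully in line with what the paper asserts without proving.
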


 For $\hf\subset \binom{[n]}{k}$ and $0\leq \ell< k$, define the {\it $\ell$th shadow} $\partial^{(\ell)}\hf$ as
\[
\partial^{(\ell)}\hf = \left\{E\in \binom{[n]}{\ell}\colon \mbox{ there exists }F\in \hf \mbox{ such that } E\subset F\right\}.
\]

\begin{prop}[\cite{Sperner}]\label{sperner}
For $\hf\subset \binom{[n]}{k}$ and $0\leq \ell<k$,
\begin{align}\label{ineq-sperner}
\frac{|\partial^{(\ell)}\hf|}{\binom{n}{\ell}} \geq \frac{|\hf|}{\binom{n}{k}}.
\end{align}
\end{prop}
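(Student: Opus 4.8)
The plan is to deduce the inequality from its one-step special case ($\ell=k-1$) and to prove that special case by a direct double count; this is essentially Sperner's original argument.

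First I would record the tower identity for iterated shadows: for $0\le j<k$,
\[
\partial^{(j)}\bigl(\partial^{(j+1)}\hf\bigr)=\partial^{(j)}\hf ,
\]
which holds because a $j$-set lies below a member of $\hf$ if and only if it lies below some $(j+1)$-subset of such a member, i.e.\ below some member of $\partial^{(j+1)}\hf$; the only point to check is that $j+1\le k$, so that every member of $\hf$ genuinely contains a $(j+1)$-subset. Granting the one-step bound
\[
\frac{|\partial^{(k-1)}\hg|}{\binom{n}{k-1}}\ge\frac{|\hg|}{\binom{n}{k}}\qquad\text{for every }\hg\subset\binom{[n]}{k},
\]
the general statement follows from the chain
\[
\frac{|\hf|}{\binom{n}{k}}\le\frac{|\partial^{(k-1)}\hf|}{\binom{n}{k-1}}\le\frac{|\partial^{(k-2)}\hf|}{\binom{n}{k-2}}\le\cdots\le\frac{|\partial^{(\ell)}\hf|}{\binom{n}{\ell}},
\]
where the step from level $j+1$ to level $j$ applies the one-step bound to $\partial^{(j+1)}\hf$ and then rewrites $\partial^{(j)}\partial^{(j+1)}\hf$ as $\partial^{(j)}\hf$ using the tower identity.

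For the one-step bound itself I would count incidences $(E,F)$ with $|E|=k-1$, $|F|=k$, $E\subset F$ and $F\in\hf$. Each $F\in\hf$ contains exactly $k$ such $(k-1)$-subsets, all of which lie in $\partial^{(k-1)}\hf$, so there are exactly $k|\hf|$ incidences. On the other hand only sets $E\in\partial^{(k-1)}\hf$ can occur, and each such $E$ is contained in at most $n-(k-1)$ sets of size $k$, so the number of incidences is at most $(n-k+1)|\partial^{(k-1)}\hf|$. Comparing the two counts gives $k|\hf|\le(n-k+1)|\partial^{(k-1)}\hf|$, which is exactly the one-step bound since $\binom{n}{k}=\tfrac{n-k+1}{k}\binom{n}{k-1}$.

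I do not expect a genuine obstacle here: the whole argument is elementary, and the only thing requiring a moment's attention is the tower identity (in particular that one obtains equality, not merely one inclusion, which is why the condition $\ell<k$ matters). If one preferred, one could instead invoke the Kruskal--Katona theorem directly --- compressing $\hf$ to an initial segment of colexicographic order preserves $|\hf|$ and does not increase $|\partial^{(\ell)}\hf|$, and for an initial segment the ratio inequality reduces to a short direct computation --- but the double count above is self-contained and shorter.
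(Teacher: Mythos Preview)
Your proof is correct and is exactly the classical Sperner double-counting argument. The paper itself does not prove this proposition at all; it simply states it with a citation to Sperner's original paper, so there is nothing further to compare.
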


 We  need the following consequence of Sperner's bound (Proposition \ref{sperner}).

 \begin{cor}
 Let $\ha\subset \binom{[n]}{a}$, $\hb\subset \binom{[n]}{b}$ be cross-intersecting families. Then
 \begin{align}\label{ineq-sperner2}
 \frac{|\ha|}{\binom{n}{a}}+ \frac{|\hb|}{\binom{n}{b}}\leq 1.
 \end{align}
 \end{cor}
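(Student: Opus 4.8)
The plan is to deduce the inequality \eqref{ineq-sperner2} from the Kruskal--Katona machinery already set up, specifically Hilton's Lemma (Lemma \ref{lem-5.1}) together with Sperner's bound (Proposition \ref{sperner}). First I would dispose of trivial cases: if either $\ha$ or $\hb$ is empty the claim is immediate, so assume both are nonempty, hence $|\ha|\geq 1$ and $|\hb|\geq 1$. By Hilton's Lemma we may replace $\ha$ and $\hb$ by the initial segments $\hl(n,a,|\ha|)$ and $\hl(n,b,|\hb|)$ in the lexicographic order, which remain cross-intersecting and have the same sizes. So it suffices to prove \eqref{ineq-sperner2} when $\ha$ and $\hb$ are both lex-initial.

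The key structural observation for lex-initial cross-intersecting families is that their "boundaries" nest. Concretely, the lex-initial $b$-set family $\hl(n,b,|\hb|)$ being cross-intersecting with $\hl(n,a,|\ha|)$ forces every member of $\ha$ to meet every member of $\hb$; since $\hb$ is an initial segment, this translates into the condition that $\partial^{(a)}(\hb^{c})$, or more usefully the family of $a$-sets \emph{disjoint} from some member of $\hb$, is confined to a lex-initial segment that must avoid $\ha$. The cleanest route is: let $\hb^{*}=\{[n]\setminus B\colon B\in\hb\}\subset\binom{[n]}{n-b}$; then $\ha$ and $\hb$ cross-intersecting means no $A\in\ha$ is contained in any member of $\hb^{*}$, i.e. $\ha\cap\partial^{(a)}\hb^{*}=\emptyset$. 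Hence $|\ha|\leq\binom{n}{a}-|\partial^{(a)}\hb^{*}|$. Now apply Sperner's bound \eqref{ineq-sperner} to $\hb^{*}\subset\binom{[n]}{n-b}$ with $\ell=a$ (note $a<n-b$ since $n>a+b$): this gives $|\partial^{(a)}\hb^{*}|\big/\binom{n}{a}\geq|\hb^{*}|\big/\binom{n}{n-b}=|\hb|\big/\binom{n}{b}$. Combining, $|\ha|\big/\binom{n}{a}\leq 1-|\hb|\big/\binom{n}{b}$, which is \eqref{ineq-sperner2}.

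I expect the main technical care, rather than a genuine obstacle, to be the passage from "cross-intersecting" to "no $A\in\ha$ lies in a member of $\hb^{*}$" and the correct application of Sperner's bound in the regime where the shadow is being taken \emph{down} from level $n-b$ to level $a$ — one must check $0\leq a<n-b\leq n$, which is exactly the hypothesis $n>a+b$. One subtlety: Proposition \ref{sperner} is stated for $\ell<k$, and here $k=n-b$, $\ell=a$, so $a<n-b$ is precisely what is needed; the inequality $n>a+b$ supplied in the corollary's hypothesis is therefore used in an essential way. An alternative, perhaps even shorter, argument avoids complements: apply Hilton's Lemma, then observe directly that for lex-initial cross-intersecting $\ha,\hb$ one has, writing $\ha=\hl(n,a,\alpha)$ and $\hb=\hl(n,b,\beta)$, the threshold relation that the last set of $\ha$ and the last set of $\hb$ together "use up" the interval $[n]$; turning this into the numerical inequality \eqref{ineq-sperner2} can be done by the shadow estimate above applied to whichever of $\ha,\hb$ one prefers. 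Either way the proof is short and I would present the complement version since it makes the single invocation of Sperner's bound completely transparent.
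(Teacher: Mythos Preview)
Your complement-and-shadow argument is correct and essentially identical to the paper's proof, which takes complements of $\ha$ (rather than $\hb$) and applies Sperner's bound once to shadow from level $n-a$ down to level $b$. Note that your preliminary reduction via Hilton's Lemma is unnecessary: the complement argument you present works directly for arbitrary cross-intersecting $\ha,\hb$ without any initiality assumption, so you can drop that step.
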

 \begin{proof}
 Let $\ha^{c}=\{[n]\setminus A\colon A\in \ha\}$. Since $\ha,\hb$ are cross-intersecting, $\partial^{(b)} \ha^{c}\cap \hb=\emptyset$. It follows that
 \[
 |\partial^{(b)} \ha^{c}|+|\hb|\leq \binom{n}{b}.
 \]
 By Sperner's bound (Proposition \ref{sperner}), we have
 \[
 \frac{|\partial^{(b)} \ha^{c}|}{\binom{n}{b}} \geq \frac{|\ha^{c}|}{\binom{n}{n-a}}=  \frac{|\ha|}{\binom{n}{a}}.
 \]
 Thus \eqref{ineq-sperner2} follows.
 \end{proof}

\begin{proof}[Proof of Proposition \ref{prop-1.9}]
Let $\hf\subset 2^{[n]}$ be an initial $2r$-union family. Since adding a subset $F_0\subset F\in \hf$ to $\hf$ will not affect the $2r$-union property, we may assume that $\hf$ is a complex.

\begin{claim}\label{claim-5.1}
If $|\hf^{(r)}|>\binom{n-1}{r-1}+\binom{n-2}{r-1}$, then
\begin{align}\label{ineq-claim5.1}
|\hf^{(r)}|+2.5|\hf^{(r+1)}|\leq \binom{n}{r}.
\end{align}
\end{claim}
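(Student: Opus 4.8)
The plan is to exploit the structure forced by $|\hf^{(r)}|$ being large together with the cross-intersecting relations supplied by Fact~\ref{ineq-1.8}. Since $\hf$ is $2r$-union, Fact~\ref{ineq-1.8}(i) with $h=0$, $i=0$ tells us $\hf^{(r)}$ is $0$-intersecting (no information), but $\hf^{(r+1)}$ is $2$-intersecting, and (ii) says $\hf^{(r)}$ and $\hf^{(r+1)}$ are cross $1$-intersecting. The first step is therefore to record these two facts and to note that, because $\hf$ is initial (hence shifted), we may apply the walk/shadow machinery. The hypothesis $|\hf^{(r)}|>\binom{n-1}{r-1}+\binom{n-2}{r-1}$ should be read as: $\hf^{(r)}$ contains more than the ``first two stars'' worth of $r$-sets in the lexicographic/shifting order, which by the reflection-principle propositions (Proposition~\ref{prop-2.4} applied to the complement, or the walk characterization) forces every $F\in\hf^{(r+1)}$ to have its walk $w(F)$ hit a low line $y=x+c$ with $c$ large, i.e. $\hf^{(r+1)}$ is very small.

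Concretely, I would argue as follows. First bound $|\hf^{(r)}|$ itself: since nothing constrains $\hf^{(r)}$ beyond $\hf^{(r)}\subset\binom{[n]}{r}$, we trivially have $|\hf^{(r)}|\le\binom{n}{r}$, but we want the combination. The key step is to show $|\hf^{(r+1)}|$ is small \emph{relative to the deficiency} $\binom{n}{r}-|\hf^{(r)}|$. Using that $\hf^{(r)}$ and $\hf^{(r+1)}$ are cross $1$-intersecting and $\hf$ is initial, the set $\hf^{(r)}$ being larger than the first two full stars means (via Hilton's Lemma, Lemma~\ref{lem-5.1}, after passing to the complement of $\hf^{(r)}$ and taking shadows, or directly via the walk picture) that $\hf^{(r+1)}$ can only contain $(r+1)$-sets lying ``deep'' in the shifted order, whose count is at most roughly $\binom{n-3}{r-2}$ or smaller. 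Then I would compare: writing $|\hf^{(r)}|=\binom{n}{r}-\delta$ with $0\le\delta<\binom{n}{r}-\binom{n-1}{r-1}-\binom{n-2}{r-1}=\binom{n-2}{r}$ not quite — rather $\delta$ is whatever it is — the inequality $|\hf^{(r)}|+2.5|\hf^{(r+1)}|\le\binom{n}{r}$ becomes $2.5|\hf^{(r+1)}|\le\delta$, so it suffices to prove $|\hf^{(r+1)}|\le\frac{2}{5}\delta$, i.e. each ``missing'' $r$-set is charged against at most $\frac25$ of an $(r+1)$-set. Since each $(r+1)$-set in an initial complex that lies below some line $y=x+c$ drags along many missing $r$-sets below a nearby line, the shadow/Kruskal-Katona bound of Proposition~\ref{sperner} gives $|\partial^{(r)}\hf^{(r+1)}|\ge\frac{\binom{n}{r}}{\binom{n}{r+1}}|\hf^{(r+1)}|=\frac{r+1}{n-r}|\hf^{(r+1)}|$, and since the complement $\binom{[n]}{r}\setminus\hf^{(r)}$ must contain $\partial^{(r)}\hf^{(r+1)}$ minus a controlled part (those $r$-subsets of members of $\hf^{(r+1)}$ that happen to lie in $\hf^{(r)}$) — here the cross $1$-intersecting condition and initiality pin down exactly which $r$-subsets can be in $\hf^{(r)}$ — we get $\delta\ge c\cdot\frac{r+1}{n-r}|\hf^{(r+1)}|$ for a suitable constant, and $n\ge 3.5r+1$ makes $\frac{r+1}{n-r}\ge\frac{r+1}{2.5r}$ large enough that $2.5|\hf^{(r+1)}|\le\delta$.

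The main obstacle I anticipate is the bookkeeping in relating $\binom{[n]}{r}\setminus\hf^{(r)}$ to the shadow $\partial^{(r)}\hf^{(r+1)}$: a priori $\hf^{(r+1)}$'s shadow could sit mostly inside $\hf^{(r)}$, defeating the charging argument. This is precisely where the hypothesis $|\hf^{(r)}|>\binom{n-1}{r-1}+\binom{n-2}{r-1}$ must be used — it forces $\hf^{(r)}$, being initial, to contain all $r$-sets $F$ with $1\in F$ or $2\in F$ (roughly, $w(F)$ hitting $y=x+1$ with the first step up, or the first two steps), hence any $(r+1)$-set $G\in\hf^{(r+1)}$ with $\{1,2\}\cap G$ small would violate the cross $1$-intersecting property with some $r$-set in $\hf^{(r)}$; this squeezes $\hf^{(r+1)}$ into $\{G:\{1,2\}\subset G\}$ or similar, and then its shadow in the non-star part of $\binom{[n]}{r}$ is genuinely large. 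I would handle the equality discussion and the exact constant $2.5$ by the explicit $\binom{n-6}{\cdot}$-type estimates and the inequality \eqref{ineq-key}, checking the single worst case $n=3.5r+1$ numerically at the end.

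I expect the proof of the claim to then feed into the proof of Theorem~\ref{prop-1.9} via a second case (when $|\hf^{(r)}|\le\binom{n-1}{r-1}+\binom{n-2}{r-1}$) handled by a cruder bound on $\sum_{i\ge1}|\hf^{(r+i)}|$ coming from Proposition~\ref{prop-2.4} applied to each $\hf^{(r+i)}$ ($2i$-intersecting), so that the two cases combine to give $|\hf^{(\ge r)}|\le\binom{n}{r}$; but that assembly lies beyond the statement I was asked to prove.
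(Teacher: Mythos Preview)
Your outline contains the right ingredients --- the cross $1$-intersecting relation from Fact~\ref{ineq-1.8}, Hilton's Lemma, and the observation that $|\hf^{(r)}|>\binom{n-1}{r-1}+\binom{n-2}{r-1}$ forces every member of (the lexicographic replacement of) $\hf^{(r+1)}$ to contain $\{1,2\}$ --- but the quantitative step is executed backwards, and as written the argument does not close.

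The specific error is your application of Sperner's bound to $\partial^{(r)}\hf^{(r+1)}$ on the full ground set $[n]$. That gives
\[
|\partial^{(r)}\hf^{(r+1)}|\ \ge\ \frac{\binom{n}{r}}{\binom{n}{r+1}}\,|\hf^{(r+1)}|\ =\ \frac{r+1}{n-r}\,|\hf^{(r+1)}|,
\]
and for $n\ge 3.5r+1$ the ratio $\frac{r+1}{n-r}$ is \emph{at most} $\frac{r+1}{2.5r+1}<\frac12$, not at least $\frac{r+1}{2.5r}$ as you wrote. So the charging inequality $\delta\ge c\cdot\frac{r+1}{n-r}|\hf^{(r+1)}|$ produces a coefficient strictly below $1$ and can never yield $2.5|\hf^{(r+1)}|\le\delta$. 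The constant $2.5$ is not an artifact one can reach by ``checking the worst case numerically''; it is exactly $\frac{n-r-1}{r}$ at $n=3.5r+1$, and that number has to appear as a Sperner ratio, not its reciprocal.

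The fix --- and this is what the paper does --- is to use the structural consequence $\{1,2\}\subset B$ for all $B\in\hb$ (where $\hb=\hl(n,r+1,|\hf^{(r+1)}|)$) to \emph{restrict to the ground set $[3,n]$}. There $\ha(\bar1,\bar2)\subset\binom{[3,n]}{r}$ and $\hb(1,2)\subset\binom{[3,n]}{r-1}$ are cross-intersecting, and the Sperner-type corollary~\eqref{ineq-sperner2} reads
\[
\frac{|\ha(\bar1,\bar2)|}{\binom{n-2}{r}}+\frac{|\hb(1,2)|}{\binom{n-2}{r-1}}\le 1,
\]
hence
\[
|\ha(\bar1,\bar2)|+\frac{n-r-1}{r}\,|\hb(1,2)|\le\binom{n-2}{r}.
\]
Now $\frac{n-r-1}{r}\ge 2.5$ is exactly the hypothesis $n\ge 3.5r+1$, and adding back the two full stars $\binom{n-1}{r-1}+\binom{n-2}{r-1}$ gives \eqref{ineq-claim5.1} immediately. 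The point is that after stripping $\{1,2\}$ the $(r+1)$-sets become $(r-1)$-sets while the $r$-sets avoiding $\{1,2\}$ stay $r$-sets, so the Sperner ratio flips to $\binom{n-2}{r}/\binom{n-2}{r-1}=\frac{n-r-1}{r}$, which is large. Your shadow went from level $r+1$ down to $r$ on $[n]$, where the ratio is small; the paper's goes (in effect) from level $n-2-r$ down to $r-1$ on $[3,n]$, where it is large.
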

\begin{proof}
By Fact \ref{ineq-1.8}, $\hf^{(r)}$ and $\hf^{(r+1)}$ are cross-intersecting. Let $\ha=\hl(n,r, |\hf^{(r)}|)$ and $\hb=\hl(n,r+1, |\hf^{(r+1)}|)$. By Hilton's Lemma (Lemma \ref{lem-5.1}) $\ha,\hb$ are cross-intersecting as well. Since $|\ha|=|\hf^{(r)}|> \binom{n-1}{r-1}+\binom{n-2}{r-1}$, we see that
\[
\left\{A\in \binom{[n]}{r}\colon A\cap \{1,2\}\neq \emptyset\right\}\subset \ha\mbox{ and thereby }|\hb|=|\hb(1,2)|.
\]
Then $\ha(\bar{1},\bar{2})$, $\hb(1,2)$ are cross-intersecting families on the ground set $[3,n]$. By \eqref{ineq-sperner2},
\[
\frac{|\ha(\bar{1},\bar{2})|}{\binom{n-2}{r}}+\frac{|\hb(1,2)|}{\binom{n-2}{r-1}}\leq 1.
\]
Hence,
\[
|\ha(\bar{1},\bar{2})|+2.5|\hb(1,2)| \leq |\ha(\bar{1},\bar{2})|+\frac{n-r-1}{r}|\hb(1,2)| \leq \binom{n-2}{r}.
\]
By $|\ha(\bar{1},\bar{2})|=|\ha|-\binom{n-1}{r-1}-\binom{n-2}{r-1}$ and $|\hb|=|\hb(1,2)|$, \eqref{ineq-claim5.1} follows.
\end{proof}

Let us distinguish two cases.

\vspace{5pt}
{\bf Case 1.} $(2,4,6,\ldots,2r)\notin \hf^{(r)}$.
\vspace{5pt}

By \eqref{ineq-walks1},
\begin{align}\label{ineq-new5-1}
|\hf^{(r)}|\leq\binom{n}{r-1}.
\end{align}
Using that $\hf$ is a complex, for $i=1,2,\ldots,r$ we have
\[
(1,2,\ldots,2i,2i+2,\ldots,2r)\notin \hf^{(r+i)}.
\]
Then by \eqref{ineq-walks1},
\begin{align}\label{ineq-new5-2}
|\hf^{(r+i)}|\leq\binom{n}{r-i-1}.
\end{align}

Since $n>3r$, $\binom{n}{r-i}/\binom{n}{r-i-1} =\frac{n-r+i+1}{r-i}>2$. Thus \eqref{ineq-new5-1} and \eqref{ineq-new5-2}  imply
\begin{align}\label{ineq-5-5}
\sum_{i\geq 0}|\hf^{(r+i)}|< 2\binom{n}{r-1}=2\frac{r}{n-r+1} \binom{n}{r}<\binom{n}{r}.
\end{align}

\vspace{5pt}
{\bf Case 2.} $(2,4,6,\ldots,2r)\in \hf^{(r)}$.
\vspace{5pt}

By Fact \ref{ineq-1.8}, $\hf^{(r+i)}$ is $2i$-intersecting for $i=1,2,\ldots,r$. By \eqref{ineq-ekr} and $n> 3r$ we have
\begin{align}\label{ineq-5-1}
|\hf^{(r+1)}|\leq \binom{n-2}{r-1}.
\end{align}
Using $n> 2r$ and \eqref{ineq-universalbound2},
\begin{align*}
|\hf^{(r+i)}|\leq \binom{n-1}{r-i} \mbox{ for }i=2,3,\ldots,r.
\end{align*}
For $n>3.5r$, $\binom{n-1}{r-i}/\binom{n-1}{r-i-1} =\frac{n-r+i}{r-i}>2.5$, implying
\begin{align}\label{ineq-5-22}
\sum_{i\geq 2}|\hf^{(r+i)}|\leq \sum_{i\geq 2}\binom{n-1}{r-i}< \frac{5}{3}\binom{n-1}{r-2}.
\end{align}

We distinguish three subcases according to the size of $\hf^{(r)}$.

\vspace{5pt}
{\bf Subcase 2.1.} $|\hf^{(r)}|< \binom{n-1}{r-1}+\binom{n-2}{r-1}$.
\vspace{5pt}

By \eqref{ineq-5-1} and \eqref{ineq-5-22},
\begin{align*}
|\hf^{(\geq r)}|&=|\hf^{(r)}|+|\hf^{(r+1)}|+\sum_{i\geq 2} |\hf^{(r+i)}|\\[5pt]
&\leq \binom{n-1}{r-1}+2\binom{n-2}{r-1}+\frac{5}{3}\binom{n-1}{r-2}\\[5pt]
&\leq \binom{n-1}{r-1}+2\binom{n-2}{r-1}+2\binom{n-1}{r-2}\\[5pt]
&=\binom{n-1}{r-1}\left(1+\frac{2(n-r)}{n-1}+\frac{2(r-1)}{n-r+1}\right).
\end{align*}
As $n>3.5r$, using $\frac{n-1}{r-1}>3.5$ and $n-r=(n-1)-(r-1)$, we infer
\begin{align*}
1+\frac{2(n-r)}{n-1}+\frac{2(r-1)}{n-r+1}
&=3+2(r-1)\left(\frac{1}{n-r+1}-\frac{1}{n-1}\right)\\[5pt]
&=3+\frac{2(r-1)(r-2)}{(n-1)(n-r+1)}\\[5pt]
&< 3+\frac{2}{3.5\times 2.5}<3.5.
\end{align*}
Thus $|\hf^{(\geq r)}|<3.5\binom{n-1}{r-1} \leq \binom{n}{r}$.

\vspace{5pt}
{\bf Subcase 2.2.} $\binom{n-1}{r-1}+\binom{n-2}{r-1}<|\hf^{(r)}|\leq 2\binom{n-1}{r-1}$.
\vspace{5pt}

If $\sum_{i\geq 2} |\hf^{(r+i)}|\leq 1.5|\hf^{(r+1)}|$ then by Claim \ref{claim-5.1},
\[
|\hf^{(\geq r)}|=|\hf^{(r)}|+\sum_{i\geq 1} |\hf^{(r+i)}|\leq |\hf^{(r)}|+2.5|\hf^{(r+1)}|\leq \binom{n}{r}
\]
and we are done. Thus we may assume that
\begin{align}\label{ineq-5-3}
|\hf^{(r+1)}| <\frac{2}{3}\sum_{i\geq 2} |\hf^{(r+i)}|.
\end{align}
By \eqref{ineq-5-22},
\[
\sum_{i\geq 1} |\hf^{(r+i)}|\leq \frac{25}{9}\binom{n-1}{r-2}.
\]
Thus,
\begin{align*}
|\hf^{(\geq r)}|\leq 2\binom{n-1}{r-1}+\frac{25}{9}\binom{n-1}{r-2}&=\binom{n-1}{r-1}\left(2+\frac{25}{9}\frac{r-1}{n-r+1}\right)\\[5pt]
&<\binom{n-1}{r-1}\left(2+\frac{25}{9}\times\frac{2}{5}\right)\\[5pt]
&<3.2 \binom{n-1}{r-1}<\binom{n}{r}.
\end{align*}

\vspace{5pt}
{\bf Subcase 2.3.} $|\hf^{(r)}|> 2\binom{n-1}{k-1}$.
\vspace{5pt}

By Claim \ref{claim-5.1} we may assume that
\begin{align}\label{ineq-5-33}
|\hf^{(r+1)}|\leq  \frac{2}{3}\sum_{i\geq 2} |\hf^{(r+i)}|.
\end{align}
 Since $|\hf^{(r)}|> 2\binom{n-1}{r-1}=\binom{n}{r}-\binom{n-3+2}{r}+\binom{n-3+2}{r-1}$, by \eqref{ineq-walks2} we infer that $(3,5,7,\ldots,2r+1)\in \hf^{(r)}$.

Choose the maximal $j$ such that $(j,j+2,\ldots,j+2r-2)\in \hf^{(r)}$. Then $(j+1,j+3,\ldots,j+2r-1)\notin \hf^{(r)}$ and $j\geq 3$. Recall that $\hf$ is an initial complex. If $\hf^{(r+1)}=\emptyset$ then $|\hf^{(\geq r)}|=|\hf^{(r)}|\leq \binom{n}{r}$. Thus we may assume that $\hf^{(r+1)}\neq \emptyset$. By \eqref{ineq-5-33}, $\hf^{(r+2)}\neq \emptyset$. Then the initiality implies  $[r+2]\in \hf^{(r+2)}$. Since $\hf^{(r)}$, $\hf^{(r+2)}$ are cross 2-intersecting, $j\leq r$. Hence $3\leq j\leq r$.

Since $(j+1,j+3,\ldots,j+2r-1)\notin \hf^{(r)}$, by \eqref{ineq-walks2}
\begin{align}\label{ineq-5-4}
|\hf^{(r)}|\leq \binom{n}{r}-\binom{n-j+1}{r}+ \binom{n-j+1}{r-1}.
\end{align}

Recall that $\hf^{(r)},\hf^{(r+i)}$ are cross $i$-intersecting. By $(j,j+2,\ldots,j+2r-2)\in \hf^{(r)}$, we infer that
\[
(1,2,\ldots, j+2i-5, j+2i-4,j+2i-3,j+2i-1,\ldots,2r-j+1)\notin \hf^{(r+i)}.
\]
By \eqref{ineq-walks1} we have
\begin{align*}
|\hf^{(r+i)}| \leq  \binom{n}{(r+i)-(j+2i-3)-1} =\binom{n}{r-i-j+2}.
\end{align*}
Since $n>3.5r$ implies $\binom{n}{r-i-j+3}/\binom{n}{r-i-j+2} =\frac{n-r+i+j-2}{r-i-j+3}>2.5=\frac{5}{2}$,
\[
\sum_{2\leq i\leq r} |\hf^{(r+i)}|\leq \sum_{2\leq i\leq r} \binom{n}{r-i-j+2} \leq \frac{5}{3}\binom{n}{r-j}.
\]
By \eqref{ineq-5-3},
\[
\sum_{1\leq i\leq r} |\hf^{(r+i)}| \leq \left(1+\frac{2}{3}\right)\times \frac{5}{3}\binom{n}{r-j}=\frac{25}{9}\binom{n}{r-j}.
\]
Thus,
\[
|\hf^{(\geq r)}| \leq \binom{n}{r}-\binom{n-j+1}{r}+ \binom{n-j+1}{r-1}+\frac{25}{9}\binom{n}{r-j}.
\]
We are left to show that
\begin{align}\label{ineq-finial}
\binom{n-j+1}{r}-\binom{n-j+1}{r-1}-\frac{25}{9}\binom{n}{r-j}>  0.
\end{align}

Since $n> 3.5r$ and $j\leq r$ imply $\binom{n-j+1}{r}/\binom{n-j+1}{r-1}=\frac{n-r-j+2}{r}\geq 1.5$,
\[
\binom{n-j+1}{r}-\binom{n-j+1}{r-1}\geq \frac{1}{3}\binom{n-j+1}{r}.
\]
As $3\leq j\leq r$ and $n>3.5r$, by \eqref{ineq-key}
\begin{align*}
\frac{\binom{n-j+1}{r}}{\binom{n}{r-j}} \geq \left(\frac{n-r+2}{n-j+2}\right)^{j-1}\left(\frac{n-r+1}{r}\right)^{j}&\geq \left(\frac{n-r+2}{n-1}\right)^{j-1}\left(\frac{n-r}{r}\right)^{j}\geq \frac{2.5^{2j-1}}{3.5^{j-1}}.
\end{align*}
 Since $2.5^7/3.5^3> 14$, for $j\geq 4$ we have
\begin{align*}
\frac{1}{3}\binom{n-j+1}{r}-\frac{25}{9}\binom{n}{r-j}
&\geq \frac{1}{3}\binom{n}{r-j}\left(\frac{2.5^7}{3.5^3}- \frac{25}{9}\right)\\[5pt]
&>\frac{1}{3}\binom{n}{r-j}\left(14- \frac{25}{3}\right)>0
\end{align*}
and \eqref{ineq-finial} follows.

For $j=3$, using $n\geq 3.5r+1$ we have $\binom{n-2}{r}/\binom{n-2}{r-1}=\frac{n-r-1}{r}\geq 2.5$. It follows that
\[
\binom{n-2}{r}-\binom{n-2}{r-1}\geq \frac{3}{5}\binom{n-2}{r}.
\]
Thus,
\begin{align*}
\frac{3}{5}\binom{n-2}{r}-\frac{25}{9}\binom{n}{r-3}
&>\binom{n}{r-3}\left(\frac{3}{5}\times \frac{2.5^5}{3.5^2}- \frac{25}{9}\right)>0.
\end{align*}
Thus \eqref{ineq-finial} holds and the theorem is proven.
\end{proof}

\section{Proof of Theorem \ref{thm-1.10}}

\begin{proof}[Proof of Theorem \ref{thm-1.10}]
Assume that $\hf\subset 2^{[n]}$ is an initial $(2r+1)$-union complex. We distinguish two cases.

\vspace{5pt}
{\bf Case 1.} $(1,3,5,\ldots,2r+1)\notin \hf^{(r+1)}$.
\vspace{5pt}

By \eqref{ineq-walks1},
\[
|\hf^{(r+1)}|\leq \binom{n}{r-1}.
\]
Using the fact that $\hf$ is a complex, we infer $(1,2,3,4,5,\ldots,2i-2,2i-1,2i+1,\ldots,2r+1)\notin \hf^{(r+i)}$ for $i=2,3,\ldots,r+1$. By \eqref{ineq-walks1},
\[
|\hf^{(r+i)}|\leq \binom{n}{(r+i)-(2i-1)-1}=\binom{n}{r-i}.
\]
Since $n\geq 4r$ implies $\binom{n}{r-i}/\binom{n}{r-i-1}=\frac{n-r+i+1}{r-i}>3$,
\[
\sum_{i\geq 1}|\hf^{r+i}| \leq \sum_{i\geq 1}\binom{n}{r-i} \leq \frac{3}{2}\binom{n}{r-1} \leq \frac{3rn}{2(n-r+1)(n-r)}\binom{n-1}{r} <\binom{n-1}{r}.
\]

\vspace{5pt}
{\bf Case 2.} $(1,3,5,\ldots,2r+1)\in \hf^{(r+1)}$.
\vspace{5pt}

Choose maximal $j$ such that $(1,j,j+2,\ldots,j+2r)\in \hf^{(r+1)}$. Then the maximality implies $(1,j+1,j+3,\ldots,j+2r+1)\notin \hf^{(r+1)}$. Clearly $j\geq 3$.  Recall the definition of the left-translate
\[
L_1(\hf)=\left\{\{x_1-1,x_2-1,\ldots,x_k-1\}\colon \{x_1,x_2,\ldots,x_k\}\in \hf\right\}.
\]
It follows that $(j,j+2,\ldots,j+2r)\notin L_1(\hf^{(r+1)}(1))$.
Thus by \eqref{ineq-walks2}
\begin{align}\label{ineq-thm1.10case2-1}
|\hf^{(r+1)}(1)| = |L_1(\hf^{(r+1)}(1))| \leq \binom{n-1}{r}-\binom{n-j+1}{r}+\binom{n-j+1}{r-1}.
\end{align}
If $\hf^{(\geq r+2)}=\emptyset$, then by  \eqref{ineq-ekr} for $n> 2(r+1)$,
\[
|\hf^{(\geq r+1)}|=|\hf^{(r+1)}| \leq \binom{n-1}{r}
\]
with equality holding if and only if $\hf^{(r+1)}=\hs(n,r+1,1)$, the full star up to isomorphism. Thus we may assume that $\hf^{(\geq r+2)}\neq\emptyset$. As $\hf$ is a complex, $[r+2]\in \hf$. By Fact \ref{ineq-1.8} we know that $\hf^{(r+1)},\hf^{(r+2)}$ are cross 2-intersecting, implying that $j\leq r+2$.

By Fact \ref{ineq-1.8}, $\hf^{(r+1)}$ is intersecting. Then $(1,j,j+2,\ldots,j+2r)\in \hf^{(r+1)}$ implies that
\[
(2,3,\ldots,j-1,j+1,\ldots,2r-j+5)\notin \hf^{(r+1)}(\bar{1}).
\]
That is, $(1,2,\ldots,j-2,j,\ldots,2r-j+4)\notin L_1(\hf^{(r+1)}(\bar{1}))$. Using \eqref{ineq-walks1},
\begin{align}\label{ineq-thm1.10case2-2}
|\hf^{(r+1)}(\bar{1})|= |L_1(\hf^{(r+1)}(\bar{1}))|\leq \binom{n-1}{r+1-(j-2)-1} = \binom{n-1}{r-j+2}.
\end{align}

Note also  that $\hf^{(r+1)}, \hf^{(r+i)}$ are cross $i$-intersecting. We see that $(1,j,j+2,\ldots,j+2r)\in \hf^{(r+1)}$ implies
\[
(1,2,\ldots,j+2i-6,j+2i-5,j+2i-3,\ldots,2r-j+5)\notin \hf^{(r+i)}.
\]
Hence, by \eqref{ineq-walks1}
\begin{align*}
|\hf^{(r+i)}|\leq   \binom{n}{r+i-(j+2i-5)-1}= \binom{n}{r-i-j+4}.
\end{align*}
Since $\binom{n}{r-i-j+4}/\binom{n}{r-i-j+3} =\frac{n-r+i+j-3}{r-i-j+4}>3$,
\begin{align}\label{ineq-thm1.10case2-3}
\sum_{i\geq 2}|\hf^{(r+i)}|\leq   \sum_{i\geq 2}\binom{n}{r-i-j+4}\leq \frac{3}{2}\binom{n}{r-j+2}.
\end{align}

Adding \eqref{ineq-thm1.10case2-1}, \eqref{ineq-thm1.10case2-2} and \eqref{ineq-thm1.10case2-3}, we obtain that
\[
|\hf^{(\geq r+1)}| \leq \binom{n-1}{r}-\binom{n-j+1}{r}+\binom{n-j+1}{r-1}+\frac{5}{2}\binom{n}{r-j+2}.
\]
We are left to show that
\begin{align}\label{ineq-finial2}
\binom{n-j+1}{r}-\binom{n-j+1}{r-1}-\frac{5}{2}\binom{n}{r-j+2}>0.
\end{align}
Note that $\binom{n-j+1}{r}/\binom{n-j+1}{r-1}= \frac{n-j-r+2}{r}\geq 2$. It implies that $\binom{n-j+1}{r}-\binom{n-j+1}{r-1}\geq \frac{1}{2}\binom{n-j+1}{r}$.
By \eqref{ineq-key}, $3\leq j\leq r+2$ and $n> 4r$,
\begin{align*}
\frac{\binom{n-j+1}{r}}{\binom{n}{r-j+2}} \geq \left(\frac{n-r}{n-j+2}\right)^{j-1}\left(\frac{n-r-1}{r}\right)^{j-2}\geq 3^{2j-3}/4^{j-1}.
\end{align*}
For $j\geq 5$,
\begin{align*}
\binom{n-j+1}{r}-\binom{n-j+1}{r-1}-\frac{5}{2}\binom{n}{r-j+2} &>\frac{1}{2}\binom{n-j+1}{r}-\frac{5}{2}\binom{n}{r-j+2}\\[5pt]
&\geq \frac{1}{2}\binom{n}{r-j+2} \left(3^{2j-3}/4^{j-1}-5\right)\\[5pt]
&\geq \frac{1}{2}\binom{n}{r-j+2} \left(3^{7}/4^{4}-5\right)>0.
\end{align*}
For $j=4$, using $n> 4r$ we have
\begin{align*}
&\quad\ \binom{n-3}{r}-\binom{n-3}{r-1}-\frac{5}{2}\binom{n}{r-2} \\[5pt] &=\binom{n-3}{r}\left(1-\frac{r}{n-r-2}
-\frac{5}{2}\frac{r(r-1)n(n-1)(n-2)}{(n-r+2)(n-r+1)(n-r)(n-r-1)(n-r-2)}\right)\\[5pt]
&\geq\binom{n-3}{r}\left(1-\frac{r}{n-r-2}
-\frac{5}{2}\frac{r(r-1)n(n-1)(n-2)}{(n-r+1)(n-r)(n-r-1)^3}\right)\\[5pt]
&\geq \binom{n-3}{r}\left(1-\frac{1}{3}
-\frac{5}{2}\times\frac{4^3}{3^5}\right)>0.\end{align*}
For $j=3$,
\begin{align*}
&\quad\ \binom{n-2}{r}-\binom{n-2}{r-1}-\frac{5}{2}\binom{n}{r-1} \\[5pt] &=\binom{n-2}{r}\left(1-\frac{r}{n-r-1}
-\frac{rn(n-1)}{(n-r+1)(n-r)(n-r-1)}\right)\\[5pt]
&\geq \binom{n-2}{r}\left(1-\frac{1}{3}
-\frac{4^2}{3^3}\right)>0.
\end{align*}
Thus \eqref{ineq-finial2} holds and the theorem is proven.
\end{proof}

\section{Concluding remarks}

In the present paper we investigated the maximal overflow in the Katona Theorem. We established best possible bounds under linear constraints. However there is still a gap that remains.

{\noindent \bf Open Problem.} Determine the maximal overflow
\[
\sigma(n,u)=\max\left\{\sigma_u(\hf)\colon \hf\subset 2^{[n]},\ \hf \mbox{ initial  and $u$-union}\right\}, n\geq u+2.
\]

It is not hard to show that in the case $n=u+1$, $u$ is even the maximum is attained for $\hf=2^{[u]}$.

To introduce a closely related problem, let us first recall the notation $A\bigtriangleup B=(A\setminus B)\cup (B\setminus A)$ for {\it symmetric difference}.

Let us mention that $2^{[n]}$ endowed with the distance $d(A,B)=|A\bigtriangleup B|$ is a {\it metric space}. The {\it diameter} of $\ha\subset 2^{[n]}$ is defined as $\delta(\ha)= \max\{|A\bigtriangleup A|\colon A,B\in \ha\}$.

Since $|A\bigtriangleup B| \leq |A\cup B|$, $\delta(\ha) \leq u$ for a $u$-union family $\hf$, the following result strengthens the Katona Theorem.

{\noindent\bf Kleitman's Diameter Theorem (\cite{kleitman}).} Let $n>u>0$. Suppose that $\hf\subset 2^{[n]}$ satisfies $\delta(\hf)\leq u$. Then
\begin{align}\label{ineq-kleitman}
|\hf|\leq |\hk(n,u)|.
\end{align}

Unlike in the Katona Theorem, there are many families attaining equality. In fact for every $A\subset 2^{[n]}$, $\hk_A(n,u)=\{K\bigtriangleup A \colon K\in \hk(n,u)\}$ satisfies $\delta(\hk_A(n,u))=u$. For $n>u+1$ Bezrukov \cite{Be} proved that these are the only optimal families.

For $u=2d$ the family $\hk_A(n,u)=\{K\subset [n]\colon |K\bigtriangleup A|\leq d\}$ and it is called the {\it ball} of {\it radius} $d$ with {\it center} $A$. By analogy for $u=2d+1$ we call $\hk_A(n,u)$ the {\it double ball}.

For his proof Kleitman \cite{kleitman} introduced a very useful operation, the {\it down-shift} $D_i$:
\[
D_i(\hf) =\{D_i(F)\colon F\in \hf\},
\]
where
$$D_{i}(F)=\left\{
                \begin{array}{ll}
                  F\setminus\{i\}, & \hbox{if }i\in F \in \hf \hbox{ and } F\setminus \{i\}\notin \hf; \\[5pt]
                  F, & \hbox{otherwise.}
                \end{array}
              \right.
$$

As $\delta(D_i(\hf))\leq \delta(\hf)$, applying the down-shift $D_i$, $1\leq i\leq n$ repeatedly will eventually produce a complex $\widetilde{\hf}$ with $|\widetilde{\hf}|=|\hf|$ and $\delta(\widetilde{\hf})\leq \delta(\hf)$. For a complex $\widetilde{\hf}$, $\delta(\widetilde{\hf})=\max\{|A\cup B|\colon A,B\in \widetilde{\hf}\}$. Consequently, $|\hf|=|\widetilde{\hf}|\leq |\hk(n,\delta(\hf))|$.

For a fixed pair $(n,u)$, let us define the {\it (diametral) overflow} $\kappa_u(\hf)$ as
\[
\kappa_u(\hf) =\min\{|\hf\setminus \hk_A(n,u)|\colon A\subset [n]\}.
\]

The following two facts are easy to prove.

\begin{fact}\label{fact-6.1}
Let $\hf,\hg\subset 2^{[n]}$, $i\in [n]$. Then
\[
|D_i(\hf)\cap D_i(\hg)|\geq |\hf\cap \hg|.
\]
\end{fact}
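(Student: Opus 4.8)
The plan is to exploit the fact that, once $i$ is fixed, the down-shift $D_i$ acts \emph{locally} on the two-element blocks $P_G=\{G,\,G\cup\{i\}\}$, where $G$ ranges over the subsets of $[n]\setminus\{i\}$ (these $2^{n-1}$ blocks partition $2^{[n]}$). First I would record the precise effect of $D_i$ on a single block: directly from the definition one checks that $D_i(\hf)\cap P_G$ is determined by $\hf\cap P_G$ alone, namely $D_i(\hf)\cap P_G=\hf\cap P_G$ whenever $\hf\cap P_G$ is $\emptyset$, $\{G\}$, or $P_G$, while $D_i(\hf)\cap P_G=\{G\}$ when $\hf\cap P_G=\{G\cup\{i\}\}$. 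In words, $D_i$ replaces the ``top'' set $G\cup\{i\}$ by the ``bottom'' set $G$ exactly when $G\notin\hf$, and leaves every block otherwise untouched.

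Next I would compare $\hf$ and $\hg$ block by block. Writing $X=\hf\cap P_G$, $Y=\hg\cap P_G$ and letting $X',Y'$ denote their images under the map just described, it is enough to prove $|X'\cap Y'|\ge|X\cap Y|$ for each block, since summing this over all $G\subset[n]\setminus\{i\}$ yields $|D_i(\hf)\cap D_i(\hg)|\ge|\hf\cap\hg|$. As $X$ and $Y$ each have only four possibilities, this is a short finite verification. The only case in which the inequality is not an equality is $X=\{G\}$, $Y=\{G\cup\{i\}\}$ (or the symmetric one): here $X\cap Y=\emptyset$ but $X'=Y'=\{G\}$, so the block contribution actually increases; in every other case the block contributes the same amount before and after applying $D_i$.

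I expect the only point that needs genuine care — a mild obstacle at most — is justifying the locality claim, i.e.\ that no set outside $P_G$ can be mapped into $P_G$ by $D_i$, and that the condition ``$F\setminus\{i\}\in\hf$'' used in the definition of $D_i$ is itself a statement about $\hf\cap P_G$ (since $F\setminus\{i\}=G$ when $F=G\cup\{i\}$). Once this is in place, the argument is exactly the block-by-block case check sketched above, and in particular no counting of $|\hf|$, $|D_i(\hf)|$ or injections is required beyond the trivial summation over the $2^{n-1}$ blocks.
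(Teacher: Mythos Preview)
Your argument is correct. The locality observation is exactly right: $D_i$ maps each block $P_G=\{G,G\cup\{i\}\}$ into itself (since $D_i(F)\in\{F,F\setminus\{i\}\}$, and if $F\setminus\{i\}=G$ then $F\in P_G$), and the image $D_i(\hf)\cap P_G$ depends only on $\hf\cap P_G$ via the four-case description you gave. The block-by-block inequality $|X'\cap Y'|\ge|X\cap Y|$ then follows from the short case check you outlined; the only non-trivial pair is indeed $\{G\}$ versus $\{G\cup\{i\}\}$, where the intersection grows from $0$ to $1$, and one should also glance at $X=\{G\cup\{i\}\}$, $Y=P_G$ (here $|X\cap Y|=1$ and $X'=\{G\}$, $Y'=P_G$, so $|X'\cap Y'|=1$ as well), which you implicitly include under ``every other case''.

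As for comparison with the paper: the paper does not supply a proof of this fact at all, stating only that it is ``easy to prove''. Your pairing argument is the standard and expected justification, so there is nothing to contrast.
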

\begin{fact}\label{fact-6.2}
$D_i(\hk_A(n,u)) =\hk_{A\setminus \{i\}}(n,u)$.
\end{fact}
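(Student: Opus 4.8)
The plan is to isolate two elementary facts about the down-shift $D_i$ and then combine them, rather than to chase symmetric differences set by set. The first fact is that $D_i$ is \emph{insensitive to toggling the element $i$}: writing $\hg\bigtriangleup\{i\}:=\{G\bigtriangleup\{i\}\colon G\in\hg\}$ for the family obtained from $\hg$ by deleting $i$ from each set containing it and adjoining $i$ to each set not containing it, I claim $D_i(\hg\bigtriangleup\{i\})=D_i(\hg)$ for every $\hg\subseteq 2^{[n]}$. This is seen by partitioning $2^{[n]}$ into the pairs $\{S,S\cup\{i\}\}$ with $i\notin S$ and reading off $D_i$ pairwise from its definition: $S\in D_i(\hg)$ exactly when $\hg$ meets the pair $\{S,S\cup\{i\}\}$, and $S\cup\{i\}\in D_i(\hg)$ exactly when $\hg$ contains both members of the pair. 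Both conditions depend only on \emph{how many} of the two members lie in $\hg$, not on which one, and this count is unchanged by the swap $\hg\mapsto\hg\bigtriangleup\{i\}$ performs on each pair; hence $D_i(\hg\bigtriangleup\{i\})=D_i(\hg)$.

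The second fact is that if $i\notin A$ then $\hk_A(n,u)$ is closed under deleting $i$, so $D_i$ fixes it. Indeed, for $i\notin A$ one has $(B\setminus\{i\})\bigtriangleup A=(B\bigtriangleup A)\setminus\{i\}\subseteq B\bigtriangleup A$ for every $B$; thus deleting $i$ can only decrease both $|\cdot\bigtriangleup A|$ (the quantity controlling membership when $u=2d$) and $|(\cdot\bigtriangleup A)\setminus\{1\}|$ (the quantity controlling membership when $u=2d+1$), so $B\in\hk_A(n,u)$ forces $B\setminus\{i\}\in\hk_A(n,u)$. Consequently the defining condition of $D_i$ (``$i\in F\in\hf$ and $F\setminus\{i\}\notin\hf$'') never triggers on $\hk_A(n,u)$, i.e.\ $D_i(\hk_A(n,u))=\hk_A(n,u)$, which equals $\hk_{A\setminus\{i\}}(n,u)$ because $A\setminus\{i\}=A$.

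It remains to combine these when $i\in A$. Put $A'=A\setminus\{i\}$, so $i\notin A'$ and $A=A'\bigtriangleup\{i\}$; then $K\bigtriangleup A=(K\bigtriangleup A')\bigtriangleup\{i\}$ for all $K$, hence $\hk_A(n,u)=\{C\bigtriangleup\{i\}\colon C\in\hk_{A'}(n,u)\}=\hk_{A'}(n,u)\bigtriangleup\{i\}$. Applying the first fact with $\hg=\hk_{A'}(n,u)$ gives $D_i(\hk_A(n,u))=D_i(\hk_{A'}(n,u))$, and since $i\notin A'$ the second fact yields $D_i(\hk_{A'}(n,u))=\hk_{A'}(n,u)=\hk_{A\setminus\{i\}}(n,u)$, which is the assertion. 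The one point that deserves a second look is the distinguished coordinate $1$ in the odd case: when $i=1\in A$, toggling $1$ affects neither $\hk_A(n,2d+1)$ (the quantity $|(\cdot\bigtriangleup A)\setminus\{1\}|$ ignores coordinate $1$) nor the relevant part of the center, so $\hk_A(n,2d+1)=\hk_{A\setminus\{1\}}(n,2d+1)$ already and $D_1$ fixes it; this is precisely the output of the argument above, so no separate case is needed. I expect the pairwise verification underlying the first fact to be the most fiddly bookkeeping, but it is entirely mechanical; everything else is a one-line symmetric-difference identity.
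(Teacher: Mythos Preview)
Your argument is correct. The paper does not actually prove Fact~6.2; it merely records it with the remark that it is ``easy to prove'' and leaves the verification to the reader. Your decomposition into the two lemmas---that $D_i$ is invariant under the involution $\hg\mapsto\hg\bigtriangleup\{i\}$, and that $\hk_A(n,u)$ is closed under deleting $i$ whenever $i\notin A$---is a clean and efficient way to organise what would otherwise be a somewhat tedious case analysis on the pairs $\{S,S\cup\{i\}\}$. The final paragraph on the odd case with $i=1$ is accurate but, as you yourself note, superfluous: your second lemma already covers it, since removing $1$ cannot increase $|(\cdot\bigtriangleup A')\setminus\{1\}|$.
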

\begin{cor}\label{cor-6.3}
Let $\hf\subset 2^{[n]}$ be a complex. Then
\[
\sigma_u(\hf)=\kappa_u(\hf).
\]
\end{cor}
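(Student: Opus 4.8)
The plan is to translate the set-difference overflow into a statement about intersections and then use the down-shift to move an optimal translate $\hk_A(n,u)$ to the canonical reference family, exploiting that $\hf$, being a complex, is itself fixed by every $D_i$. The starting observation is that for any $\hg\subset 2^{[n]}$ one has $|\hf\setminus\hg|=|\hf|-|\hf\cap\hg|$, so
\[
\kappa_u(\hf)=|\hf|-\max_{A\subseteq[n]}\,|\hf\cap\hk_A(n,u)|,
\]
while $\sigma_{2d}(\hf)=|\hf|-|\hf\cap\hk(n,2d)|$ for $u=2d$ and $\sigma_{2d+1}(\hf)=|\hf|-\max_{x\in[n]}|\hf\cap\hk_x(n,2d+1)|$ for $u=2d+1$. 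Since $\hk(n,u)=\hk_\emptyset(n,u)$ occurs among the reference families in $\kappa_u$, the inequality $\kappa_u(\hf)\le\sigma_u(\hf)$ is immediate, so the content is the reverse: that the maximum defining $\kappa_u(\hf)$ is already attained at the canonical reference family.

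To prove this, choose $A$ attaining $\max_{A}|\hf\cap\hk_A(n,u)|$ and apply the down-shifts $D_1,D_2,\dots,D_n$ to $\hf$ and to $\hk_A(n,u)$ one after another. At each step $\hf$ is unchanged because $\hf$ is a complex, so $D_i(\hf)=\hf$; by Fact \ref{fact-6.2} the $i$-th down-shift sends the current reference family $\hk_B(n,u)$ to $\hk_{B\setminus\{i\}}(n,u)$, again a reference family; and by Fact \ref{fact-6.1},
\[
|\hf\cap\hk_{B\setminus\{i\}}(n,u)|=|D_i(\hf)\cap D_i(\hk_B(n,u))|\ge|\hf\cap\hk_B(n,u)|.
\]
Hence one pass carries $\hk_A(n,u)$ through reference families without ever decreasing the intersection with $\hf$, terminating at $\hk_{A\setminus[n]}(n,u)=\hk_\emptyset(n,u)=\hk(n,u)$. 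Consequently $|\hf\cap\hk(n,u)|\ge|\hf\cap\hk_A(n,u)|=\max_A|\hf\cap\hk_A(n,u)|$, so equality holds and $\kappa_u(\hf)=|\hf\setminus\hk(n,u)|$. For $u=2d$ this is exactly $\sigma_{2d}(\hf)$ and we are done.

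For $u=2d+1$ one runs the same argument, now with the double balls in the role of the reference families, stopping the down-shift pass as soon as the reference family first becomes a complex; the terminal family is then one of the $\hk_x(n,2d+1)$, so the maximum of $|\hf\cap\hk_A(n,2d+1)|$ is attained on $\{\hk_x(n,2d+1)\colon x\in[n]\}$ and $\kappa_{2d+1}(\hf)=\min_x|\hf\setminus\hk_x(n,2d+1)|=\sigma_{2d+1}(\hf)$. The genuinely delicate point is precisely this identification in the odd case — pinning down which reference family the down-shifts terminate at and checking it is one of the families $\hk_x(n,2d+1)$ used to define $\sigma_{2d+1}$, i.e.\ that the only complexes among the maximum-size diameter-$(2d+1)$ families are exactly these double balls. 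Everything else (the trivial inequality $\kappa_u\le\sigma_u$, the fact that each $D_i$ step cannot decrease $|\hf\cap(\,\cdot\,)|$, and the fact that $D_i$ maps a reference family to a reference family with centre decreased by $\{i\}$) is immediate from Facts \ref{fact-6.1} and \ref{fact-6.2} and requires no computation.
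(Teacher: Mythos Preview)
Your argument for $u=2d$ is correct and is precisely the route the paper intends: combine Fact~\ref{fact-6.2} with $D_i(\hf)=\hf$ (since $\hf$ is a complex) and Fact~\ref{fact-6.1} to push any $\hk_A(n,2d)$ down to $\hk_\emptyset(n,2d)=\hk(n,2d)$ without decreasing the intersection with $\hf$, yielding $\kappa_{2d}(\hf)=|\hf\setminus\hk(n,2d)|=\sigma_{2d}(\hf)$.

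In the odd case, however, there is a genuine gap, and it is not where you flagged it. The down-shift pass is unproblematic: starting from any $\hk_A(n,2d+1)$ it terminates at $\hk_\emptyset(n,2d+1)=\hk(n,2d+1)=\hk_1(n,2d+1)$, which is indeed one of the $\hk_x$, so your step~2 gives $\kappa_{2d+1}(\hf)=|\hf\setminus\hk_1(n,2d+1)|\ge\sigma_{2d+1}(\hf)$ cleanly. The trouble is your ``immediate'' inequality $\kappa_u(\hf)\le\sigma_u(\hf)$. For $x\neq 1$, the family $\hk_x(n,2d+1)$ is the double ball around $\emptyset$ and $\{x\}$, whose two centres differ in element $x$; but every $\hk_A(n,2d+1)$ is the double ball around $A$ and $A\bigtriangleup\{1\}$, whose centres differ in element $1$. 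Hence $\hk_x(n,2d+1)$ for $x\neq 1$ does \emph{not} occur among the $\hk_A(n,2d+1)$, and you cannot conclude $\kappa_{2d+1}\le\sigma_{2d+1}$ from inclusion of reference families.

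In fact the inequality fails. Take $\hf=\hk_n(n,2d+1)$, a complex; then $\sigma_{2d+1}(\hf)=0$ (attained at $x=n$), while your own down-shift computation gives $\kappa_{2d+1}(\hf)=|\hk_n(n,2d+1)\setminus\hk_1(n,2d+1)|>0$ (for instance $[2,d+1]\cup\{n\}$ lies in this difference). So the odd half of Corollary~\ref{cor-6.3} is not true as literally stated; the paper's concluding section is simply loose here. The natural repair is to let the minimum defining $\kappa_u$ range over all double balls (equivalently all isometric copies of $\hk(n,u)$), after which both directions of your argument go through verbatim.
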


Recall the definition of the families:
\begin{align*}
&\hb(n,2d)=\{B\subset [n]\colon |B\setminus [2]|\leq d-1\}
\end{align*}
and
\begin{align*}
&\hg(n,2d+1)=\{G\subset [n]\colon |G\setminus [3]|\leq d-1\}.
\end{align*}

Then Corollary \ref{cor-6.3} implies
\[
\kappa_{2d}(\hb(n,2d)) =\binom{n-2}{d-1},\ \kappa_{2d+1}(\hg(n,2d+1)) =2\binom{n-3}{d-1}.
\]
Based on these examples let us state:

\begin{conj}
Let $\hf\subset 2^{[n]}$ with $\delta(\hf)\leq u$.  There exists an absolute constant $c$ such that for $n>cu$,
\begin{align*}
\kappa_{u}(\hf) \leq \binom{n-2}{d-1} \mbox{ for } u=2d,\ \kappa_{u}(\hf) \leq 2\binom{n-3}{d-1} \mbox{ for } u=2d+1.
\end{align*}
\end{conj}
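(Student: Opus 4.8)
The plan is to transport the proofs of Theorems~\ref{thm-main} and~\ref{thm-main2} from the $u$-union setting to the diameter setting by means of the down-shift. First observe that $\kappa_u$ is invariant under the translations $\hf\mapsto\hf\bigtriangleup C$, since $\hk_A(n,u)\bigtriangleup C=\hk_{A\bigtriangleup C}(n,u)$ and the translation merely relabels the optimal centre; so we may assume $\emptyset\in\hf$ and then apply $D_1,\dots,D_n$ repeatedly. By Facts~\ref{fact-6.1} and~\ref{fact-6.2} each step satisfies $\kappa_u(D_i(\hf))\le\kappa_u(\hf)$, while $\delta(D_i(\hf))\le\delta(\hf)\le u$ keeps us in the class; we end at a complex $\widetilde\hf$ with $\delta(\widetilde\hf)=\max\{|A\cup B|:A,B\in\widetilde\hf\}\le u$, i.e.\ a $u$-union complex. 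For such a complex Corollary~\ref{cor-6.3} gives $\kappa_u(\widetilde\hf)=\sigma_u(\widetilde\hf)$, and Theorem~\ref{thm-main} (for $u=2d$, $n\ge 6d$) resp.\ Theorem~\ref{thm-main2} (for $u=2d+1$, $n>36(d+1)$) bounds the right-hand side by $\binom{n-2}{d-1}$ resp.\ $2\binom{n-3}{d-1}$. Thus the conjecture already holds whenever $\hf$ is a complex.

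The catch, and the place a new idea is needed, is that the down-shift only \emph{decreases} $\kappa_u$, so the bound just proved for $\widetilde\hf$ does not feed back to $\hf$: a priori $\kappa_u(\hf)$ may strictly exceed $\kappa_u(\widetilde\hf)$, and up-shifts behave the same way by complementation. To handle an arbitrary $\hf$ with $\delta(\hf)\le u$ I would induct on $n$, splitting at a coordinate $x$ for which $\hf(\bar x)$ is as large as possible. The trace $\hf(\bar x)$ and the link $\hj=\{F\setminus\{x\}\colon x\in F\in\hf\}$ live on $[n]\setminus\{x\}$ and have diameter $\le u$, and for $F\in\hf(\bar x)$, $L\in\hj$ one has $|F\bigtriangleup L|=|F\bigtriangleup(L\cup\{x\})|-1\le u-1$, so the two parts are cross-diameter $u-1$. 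Applying the induction hypothesis to $\hf(\bar x)$ (parameter $u$) and to $\hj$ (parameter $u-1$), estimating the interface by a Kruskal--Katona / Hilton-type bound (Lemma~\ref{lem-5.1}, Proposition~\ref{sperner}), and choosing $x$ to minimise the contribution as in Kleitman's original argument, should --- after careful bookkeeping --- recover the binomial bounds. A cleaner alternative would couple the complex case above with a \emph{stability} form of Bezrukov's uniqueness theorem: if $|\hf|$ lies within $O(\binom{n-2}{d-1})$ of $|\hk(n,u)|$, then $\hf$ ought to be $O(\binom{n-2}{d-1})$-close in symmetric difference to some ball $\hk_A(n,u)$, so $\kappa_u(\hf)\le|\hf\bigtriangleup\hk_A(n,u)|$ is small; the regime of substantially smaller $|\hf|$ would then need a separate, softer estimate.

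The step I expect to be the main obstacle is, in either route, controlling the part of $\hf$ sitting at distance well above $d$ from every admissible centre and showing it negligible against $\binom{n-2}{d-1}$. The difficulty is that the diameter hypothesis yields weaker intersection structure than the union hypothesis: from $\delta(\hf)\le 2d$ one only extracts that $\hf^{(d+i)}$ is $i$-intersecting and $\hf^{(d+i)},\hf^{(d+j)}$ are cross-$\lceil(i+j)/2\rceil$-intersecting --- roughly half of what Fact~\ref{ineq-1.8} provides in the $u$-union case --- so the walk and shadow estimates underlying Sections~3--6 become much less forgiving here. Securing a quantitatively strong enough version of Bezrukov's theorem, equivalently a robust stability statement for Kleitman's Diameter Theorem, seems to be the key missing ingredient, and one should expect the resulting constant $c$ to be far from best possible.
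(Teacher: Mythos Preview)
The statement you are attempting is labelled \emph{Conjecture} in the paper; the authors do not prove it, so there is no paper proof to compare against. What you have written is not a proof either, and you say so yourself: the first paragraph correctly establishes the bound for complexes (this is essentially Corollary~\ref{cor-6.3} combined with Theorems~\ref{thm-main} and~\ref{thm-main2}, and is already implicit in the paper), but the second and third paragraphs are a sketch of possible attacks, not an argument.

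Your diagnosis of the obstruction is exactly right and is the reason the authors left this open. From Facts~\ref{fact-6.1} and~\ref{fact-6.2} one gets $\kappa_u(D_i(\hf))\le\kappa_u(\hf)$, so down-shifting moves $\kappa_u$ in the \emph{wrong} direction for an upper bound: the inequality $\kappa_u(\widetilde\hf)\le\binom{n-2}{d-1}$ says nothing about $\kappa_u(\hf)$. The two workarounds you propose (induction on $n$ via a trace/link split, or a quantitative stability version of Bezrukov's uniqueness theorem) are reasonable research directions, but neither is carried out, and the cross-diameter bookkeeping in the first route is not obviously compatible with the binomial targets. One small correction: your remark that the diameter hypothesis yields only $i$-intersection on $\hf^{(d+i)}$ is true for the original $\hf$, but once you pass to the complex $\widetilde\hf$ the diameter condition becomes the full $u$-union condition and you recover the $2i$-intersection of Fact~\ref{ineq-1.8}; the loss is not in the intersection structure of $\widetilde\hf$ but solely in the inability to transfer the bound back to $\hf$.
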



\begin{thebibliography}{10}

\bibitem{Be}
S.L. Bezrukov, Specification of the maximal sized subsets of the unit cube with respect to given diameter, (in Russian), Problems of Information Tramsmission, XXIII (1987), No.1, 106--109.
\bibitem{EKR} P. Erd\H{o}s, C. Ko, R. Rado, Intersection theorems for systems of finite sets, Quart. J. Math. Oxford Ser. 12 (1961), 313--320.

\bibitem{F78} P. Frankl, The Erd\H{o}s-Ko-Rado theorem is true for $n = ckt$,  Coll. Math. Soc. J. Bolyai 18 (1978), 365--375.
\bibitem{F87} P. Frankl, The shifting technique in extremal set theory, Surveys in Combinatorics  123 (1987), 81--110.
\bibitem{F17}
P. Frankl, A stability result for the Katona theorem, J. Combin. Theory Ser. B, 122 (2017), 869--876.
\bibitem{F17-2}
P. Frankl, Antichains of fixed diameter, Mosc. J. Comb. Number Theory 7 (2017), 189--219.
\bibitem{F2020} P. Frankl, Maximum degree and diversity in intersecting hypergraphs, J. Combin. Theory Ser. B 144 (2020), 81--94.
\bibitem{F2020-2} P. Frankl, An improved universal bound for $t$-intersecting families, European J. Combin. 87 (2020), 103134.
\bibitem{FW2022-3}
P. Frankl, J. Wang,  Improved bounds on the maximum diversity of intersecting families, arXiv:2304.11089, 2023.

\bibitem{Hilton}
A.J.W. Hilton, The Erd\H{o}s-Ko-Rado Theorem with valency conditions, unpublished manuscript, 1976.

 \bibitem{HM67}
 A.J.W. Hilton, E.C. Milner, Some intersection theorems for systems of finite sets, Q. J. Math. 18 (1967), 369--384.

\bibitem{Katona}
G.O.H. Katona, Intersection theorems for systems of finite sets, Acta Math. Acad. Sci. Hung. 15 (1964), 329--337.

\bibitem{kleitman}
D. J. Kleitman, On a combinatorial conjecture of Erd\H{o}s, J. Combinatorial Theory, 1:209--214, 1966.

\bibitem{Kruskal}
J.B. Kruskal, The number of simplices in a complex, in: Math. Optimization Techniques, California
Press, Berkeley, 1963, pp. 251--278.

\bibitem{Ku1}
A. Kupavskii, Diversity of uniform intersecting families, European J. Combin. 74 (2018), 39--47.
\bibitem{LP}
N. Lemons, C. Palmer, Unbalance of set systems, Graphs Combin. 24 (2008), 361--365.
\bibitem{Sperner}
E. Sperner, Ein Satz \"{u}ber Untermengen einer endlichen Menge, Math. Zeitschrift 27 (1928), 544--548.
\bibitem{W84}
R.M. Wilson, The exact bound in the Erd\H{o}s-Ko-Rado theorem, Combinatorica 4 (1984), 247--257.
\end{thebibliography}
\end{document}